\documentclass[a4paper,11pt]{amsart}
\usepackage[english]{babel}
\usepackage{color}
\usepackage{amsmath} 
\usepackage{amssymb} 
\usepackage{amsthm}
\usepackage{graphicx}
\usepackage[colorlinks=true, linkcolor=blue]{hyperref}

\usepackage{enumitem}   

\usepackage{lmodern}
\usepackage{microtype}

\usepackage{subfiles}

\usepackage{soul}

\usepackage[colorinlistoftodos,prependcaption,textsize=tiny]{todonotes}

\usepackage[margin=1in]{geometry}

\newcommand {\R} {\mathbb{R}} 

\newcommand {\T} {\mathbb{T}} 
\newcommand {\N} {\mathbb{N}}

\newcommand {\p} {\partial}

\newcommand{\bbT}{\mathbb{T}}

\newcommand{\bbZ}{\mathbb{Z}}

\newcommand{\calE}{\mathcal{E}}
\newcommand{\calF}{\mathcal{F}}

\newcommand{\calI}{\mathcal{I}}

\newcommand{\calR}{\mathcal{R}}

\newcommand{\eps}{\varepsilon}

\DeclareMathOperator{\dive }{div}

\theoremstyle{plain}

\newtheorem{theorem}{Theorem}

\newtheorem{remark}{Remark}
\newtheorem{lemma}{Lemma}[section] 

\newtheorem{prop}{Proposition}

\newtheorem{defi}{Definition}[section]

\date{\today}
\begin{document}
\title[Dissipative fluid equations around Couette flow]{Suppression of Fluid Echoes and Sobolev Stability Threshold for 2D Dissipative Fluid Equations Around Couette Flow}
\begin{abstract}
We study the Sobolev stability thresholds of 2d dissipative fluid equations around Couette flow on the domain $\T\times \R$. We prove a bound for general nonlinear interactions, which, for several fluid equations, reduces the proof of nonlinear stability to a linear stability analysis. We apply this approach to the examples of Navier-Stokes, Boussinesq and magnetohydrodynamic equations around Couette flow. This improves the Sobolev stability threshold for the Boussinesq equations around Couette flow and large affine temperature to $1/3$ and for the MHD equations around Couette flow and constant magnetic field to $1/3^+$. 
\end{abstract}

\author{Niklas Knobel}
\keywords{Navier-Stokes,  Boussinesq, magnetohydrodynamic,  Couette flow, Stability threshold}
\maketitle
\setcounter{tocdepth}{1}
\tableofcontents

\section{Introduction}
In fluid dynamics, a central question is the stability of shear flows in various fluid systems as dissipation tends to zero. This article develops a general framework to establish stability for a broad class of two-dimensional dissipative incompressible fluid equations near an affine shear flow in an infinite channel. We consider a velocity field $V: \ \R_{+}\times \bbT\times \R\to \R^2$ and a transported scalar or vector quantity  $\Phi $, that satisfy the governing equation
\begin{align}\begin{split}\label{Gen}
    \partial_t V +(V\cdot\nabla) V  + \nabla \Pi  &= \nu \Delta V +F_1[V,\Phi], \\
    \partial_t \Phi+ (V\cdot\nabla) \Phi &= \kappa \Delta\Phi +F_2[V,\Phi], \\
    \dive (V) &= 0,
        \end{split}\end{align}
with $(t,x,y)\in \R_{+}\times \bbT\times \R$.
Here, $F_1$ and $F_2$ represent interaction terms. We treat this equation as a general model encompassing various fluid systems, with the Navier–Stokes, 
 Boussinesq, 
 and magnetohydrodynamic (MHD) 
equations serving as our primary examples. The Navier-Stokes equations correspond to the case when $\Phi=F_i=0$. The Boussinesq equations model a fluid with varying temperature and read 
\begin{align}\tag{B}\label{bouss0}\begin{split}
    \partial_t V +(V\cdot\nabla) V  + \nabla \Pi  &= \nu \Delta V -    e_2 \Theta , \\
    \partial_t \Theta+ (V\cdot\nabla) \Theta &= \kappa \Delta\Theta, \\
    \dive (V) &= 0,
\end{split}\end{align}
where $\Theta: \ \R_{+}\times \bbT\times \R\to \R$ is the temperature variation. The MHD equations model a conducting fluid in the presence of a magnetic field and read
\begin{align}
  \tag{MHD}\label{MHD0}
  \begin{split}
    \partial_t V + (V\cdot \nabla) V+ \nabla \Pi  &= \nu \Delta  V + (B\cdot\nabla) B, \\
    \partial_t B + (V\cdot\nabla) B &= \kappa\Delta   B +(B\cdot\nabla) V, \\
    \dive ( V)=\dive ( B ) &= 0,
  \end{split}
\end{align}
where $B: \ \R_{+}\times \bbT\times \R\to \R^2$ corresponds to the magnetic field.

For these models, we investigate the stability of stationary solutions corresponding to linear shear flow, called Couette flow
\begin{align*}
    V_s(x,y) &= \begin{pmatrix}
        y \\ 0 
    \end{pmatrix}
\end{align*}
combined with a stationary solution $\Phi_s(x,y)$.

The relationship between fluid flow stability and dissipation is a fundamental problem in fluid dynamics \cite{schmid2002stability}. As early as Reynolds’ pioneering work \cite{reynolds1883xxix}, the transition from laminar to turbulent pipe flow at low viscosities was investigated. Our primary goal is to \emph{quantify} this relationship through a concept we refer to as the \emph{stability threshold}. Since \eqref{Gen} represents a system, stability depends on the relevant parameters, specifically $\mu=\min\{\nu,\kappa\}$. The stability threshold in Sobolev spaces (cf. \cite{bedrossian2017stability}) is defined as the smallest  $\lambda \ge 0$ for which the following implication holds:
\begin{align*}
    \Vert V-V_s, \Phi-\Phi_s  \Vert_{H^N }\le\delta \mu^{\lambda }\vert \ln(\mu)\vert^{-\tilde \lambda } \quad&\implies \quad\text{Nonlinear stability},\\
    \Vert V-V_s, \Phi-\Phi_s \Vert_{H^N }\gg  \mu^{\lambda } \vert \ln(\mu)\vert^{-\tilde \lambda } \quad&\implies \quad\text{Possible instability},
\end{align*}
for some $\delta >0$ and $\tilde \lambda\ge 0 $. If $\tilde\lambda>0$, we say $\lambda^+$ is the threshold. To identify such thresholds, we establish a general estimate that bounds nonlinear interactions and possible echo effects via dissipation. This method reduces the nonlinear stability analysis to a carefully structured linear problem, requiring the introduction of adapted variables and a linear weight. 

Using this framework, we derive thresholds for various fluid models, including the Navier–Stokes, Boussinesq, and magnetohydrodynamic equations. In particular, our approach leads to the following improvements:
\begin{itemize}
    \item For the Boussinesq equations \cite{zhai2023stability}, the previous upper bound threshold of $1/2$ is improved to $1/3$;
    \item For the MHD equations \cite{Dolce,knobel2024sobolev,wang2024stability,jin2025stability}, the previous upper bound threshold of $1/2$ is improved to $1/3^+$ (under the condition $\nu^3\le \kappa\le \nu^{\frac 13 }) $;
    \item Our analysis also extends prior MHD results by incorporating vertical constant magnetic fields, whereas earlier works considered only horizontal ones.
\end{itemize}
Moreover, for the Navier–Stokes equations \cite{bedrossian2014enhanced, masmoudi2022stability, wei2023nonlinear}, our method recovers the previously established threshold of $1/3$ in a direct and unified manner. 

For a function $F$, we define its $x$ average as $F_==\int F dx$ and the non average part as $F_{\neq} =F-F_=$. The stability thresholds derived in this work can be summarized as follows:
\begin{theorem}[Thresholds of selected fluid systems]\label{thm:thres}
Let $N\ge 13$ and $0<\nu,\kappa\le \frac 1 {10}$, then we have the following stability results:
\begin{itemize}
    \item \textbf{Navier-Stokes equations:} There exists $\delta>0$ such that if the initial data satisfies
    \begin{align*}
        \Vert V_{in} -e_1 y  \Vert_{\dot H^1\cap \dot H^N}=\eps\le \delta \nu^{\frac 13 },
    \end{align*}
    then there exists a unique solution $V-e_1y\in C_t( \dot H^1\cap \dot H^N)$ such that 
    \begin{align*}
         \langle t\rangle \Vert V^x_{\neq} \Vert_{L^2}+\langle t\rangle ^2\Vert V^y \Vert_{L^2}\lesssim e^{-c\nu^{\frac 13} t } \eps
    \end{align*}
    for some constant $c>0$ independent of the dissipation. 
    \item \textbf{Boussinesq equations \eqref{bouss0} for $\nu=\kappa$:} Consider $\Theta_s(y)=\beta^2 y$ with $\frac 12 < \beta $. There exists   $\delta_\beta>0$ such that if
    \begin{align*}
       \Vert V_{in} -e_1 y, \Theta_{in} - \beta^2 y   \Vert_{\dot H^{\frac 12 }\cap \dot H^N}=\eps\le \delta_\beta  \mu^{\frac 13 },
    \end{align*}
    then there exists a unique solution $(V-e_1y,\Theta-\beta^2 y)\in C_t( \dot H^{\frac 12 }\cap \dot H^N)$ such that 
    \begin{align*}
         \langle t\rangle^{\frac 12 } \Vert V^x_{\neq},\Theta_{\neq}  \Vert_{L^2}+\langle t\rangle^{\frac 32} \Vert V^y \Vert_{L^2}\lesssim e^{-c\mu^{\frac 13} t } \eps
    \end{align*}
    for some constant $c>0$ independent of the dissipation. 
    
    \item \textbf{MHD equations \eqref{MHD0}:} Consider $B_s=\alpha$ with $\alpha  \in \R^2\setminus \{ 0 \}$, with either $\alpha_2 \neq 0$ or  $\nu^3\le \kappa\le \nu^{\frac 13 }$. Then for all $r>0$, there exists a $\delta_{\alpha, r } >0$ such that if
     \begin{align*}
         \Vert V_{in} -e_1 y, B_{in} -  \alpha    \Vert_{H^N}=\eps\le \delta_{\alpha, r }  \mu^{\frac 13 }\vert \ln (\mu)\vert^{-(1+r)}
     \end{align*}
    then there exists a unique solution  $(V-e_1y,B-\alpha)\in C_tH^N$ such that 
    \begin{align*}
          \Vert V^x_{\neq},B^x_{\neq}  \Vert_{L^2}+\langle t\rangle  \Vert V^y,B^y- \alpha_2 \Vert_{L^2}\lesssim e^{-c\mu^{\frac 13} t } \eps
    \end{align*}
    for some constant $c>0$ independent of the dissipation. 
\end{itemize}
\end{theorem}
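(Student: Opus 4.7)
The plan is to apply the general framework announced in the introduction to each of the three systems individually: introduce carefully chosen adapted unknowns in a moving Couette frame that (block)diagonalize the linearized dynamics, combine them with a Fourier multiplier weight $A(t,k,\eta)$ tailored to the expected echo chains, and close a bootstrap that uses the paper's general nonlinear interaction bound as a black box so that only the linear decay estimates need to be verified system by system.

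Concretely, I first change coordinates by $X=x-ty$ and set $f(t,X,y)=F(t,X+ty,y)$, so that in Fourier variables one works with frequencies $(k,\eta-kt)$; this removes the Couette transport at the cost of a time-dependent symbol. I then introduce the appropriate good unknowns: vorticity $\omega=\partial_x V^y-\partial_y V^x$ for the Navier--Stokes case, the pair $(\omega,\beta^{-1}\Theta)$ aligned with the symmetric form of the Taylor--Goldstein operator for \eqref{bouss0}, and the current $j=\partial_x B^y-\partial_y B^x$ together with $\omega$ (or Els\"asser-type combinations $V\pm B$ when $\alpha_1\neq 0$) for \eqref{MHD0}. On top of these I build a weight $A(t,k,\eta)$ of Bedrossian--Masmoudi type, supported on resonance intervals centered at the critical times $t\approx \eta/k$ and comparable to $e^{c\mu^{1/3}t}$ away from them; this weight is what makes the enhanced dissipation at rate $\mu^{1/3}$ compatible with the $O(\langle t\rangle^{n})$ echo losses.

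For Navier--Stokes, the $\eps\lesssim \nu^{1/3}$ condition comes out of the standard weighted vorticity estimate: the transport nonlinearity is handled by the paper's general bound, the lift-up nonlinearity is controlled through the enhanced dissipation of the zero mode, and the quoted decay $\langle t\rangle\|V^x_{\neq}\|+\langle t\rangle^{2}\|V^y\|\lesssim e^{-c\nu^{1/3}t}\eps$ follows from the inviscid damping estimates $\|V^y\|\lesssim \langle t\rangle^{-2}\|\omega\|$ and $\|V^x_{\neq}\|\lesssim \langle t\rangle^{-1}\|\omega\|$. For the Boussinesq case I exploit that the Miles--Howard condition $\beta>1/2$ makes the linearized energy in $(\omega,\beta^{-1}\Theta)$ coercive, which both suppresses the echo chain by a power and produces the improved decay rates $\langle t\rangle^{-1/2}$ and $\langle t\rangle^{-3/2}$; the $1/3$ threshold is then dictated by the worst surviving echo. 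For the MHD case, the Alfv\'en coupling is disentangled by treating $\omega\pm j$ (or a rotated version adapted to the direction of $\alpha$); when $\alpha_2\neq 0$ the vertical magnetic background gives an additional dispersive mechanism, while for purely horizontal $\alpha$ the condition $\nu^{3}\le \kappa\le \nu^{1/3}$ is exactly the range in which the magnetic enhanced dissipation and the hydrodynamic enhanced dissipation can be balanced, with the logarithmic loss $|\ln\mu|^{-(1+r)}$ arising from summing the resonant chain of Alfv\'en echoes.

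The main obstacle throughout is the nonlinear echo driven by $V^y_{\neq}\partial_y$ near the critical times $t\approx\eta/k$: the commutator of the weight $A$ with this transport term must be absorbed by the $\mu^{1/3}$-dissipation coming from $A$ itself, and this is precisely the content of the general interaction estimate referenced in the introduction. For the MHD system the extra subtlety is tracking the partial cancellation between magnetic and hydrodynamic echoes uniformly in $\alpha$, which is what forces the $0^+$ and the explicit logarithmic factor; verifying that the weighted energy inequality closes under the smallness $\eps\le \delta_{\alpha,r}\mu^{1/3}|\ln\mu|^{-(1+r)}$ is the sharpest step, and the bootstrap then immediately delivers global existence and the claimed pointwise-in-time decay via the linear estimates.
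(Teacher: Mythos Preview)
Your high-level architecture---moving frame, adapted unknowns, a time-dependent multiplier $A$, bootstrap closed via the paper's general nonlinear bound---is exactly the paper's. The discrepancies are in the choice of adapted unknowns, and for two of the three systems they matter.

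For Boussinesq the paper does not work with $(\omega,\beta^{-1}\Theta)$ but with $\zeta_1=\Lambda_t^{-1/2}\nabla_t^\perp\!\cdot v$ and $\zeta_2=\beta^{-1}\Lambda_t^{1/2}\theta$. The extra $\Lambda_t^{\pm 1/2}$ scaling is what forces the main nonlinearity into the $\gamma=\tilde\gamma=\tfrac12$ template of Theorem~\ref{thm:main}; with the bare pair $(\omega,\beta^{-1}\Theta)$ the transport term does not land in the required commutator form and the $\mu^{1/3}$ threshold would not close. For MHD the paper does \emph{not} use Els\"asser variables or $(\omega\pm j)$: it works directly at the velocity level with $(v,b)$ and introduces the adapted velocity $\tilde v=v+\alpha_1^{-1}\partial_x^{-1}b^y_{\neq}e_1$ (resp.\ $\tilde v=v+\tfrac{\alpha\cdot\nabla_t}{\langle\alpha\cdot\nabla_t\rangle^2}b^y_{\neq}e_1$ when $\alpha_2\neq0$) to kill the linear lift-up $-b^ye_1$ in the magnetic equation. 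Els\"asser combinations do not decouple this term, so your proposed variables would still face an uncontrolled linear growth. The constraint $\nu^3\le\kappa\le\nu^{1/3}$ is not a balancing of two enhanced-dissipation mechanisms; it is precisely the condition under which the linear cross term $(\kappa-\nu)\langle A\tilde v^x,\partial_x^{-1}\Delta_t A\tilde b^y_{\neq}\rangle$ produced by the $\tilde v$-change is absorbable, and the $\alpha_2\neq0$ case lifts this constraint because the multiplier $\tfrac{\alpha\cdot\nabla_t}{\langle\alpha\cdot\nabla_t\rangle^2}$ supplies extra decay. Finally, the logarithmic loss is not a cancellation effect between magnetic and hydrodynamic echoes: it is simply the $\gamma=0$ case of Theorem~\ref{thm:main}, where the resonant multiplier $m_0$ involves $\langle s\rangle^{-1}|\ln(1+\langle s\rangle)|^{-(1+r)}$ rather than $\langle s\rangle^{-(1+\gamma)}$, and this single $|\ln\mu|^{1+r}$ factor propagates to the smallness condition.
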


\subsection{Approach and Main Result} 
To study perturbations around the Couette flow $V_s=ye_1$, it is natural to adopt a coordinate system moving along its characteristics. In these new coordinates, the perturbation variables are defined as 
 \begin{align*}
    v(t,x,y) &= V(t,x+ty,y)- V_s(y),\qquad \phi(t,x,y) = \Phi (t,x+ty,y)- \Phi_s(y).
\end{align*}
This change of variables modifies the spatial derivatives accordingly:
$$
\p_y^t := \p_y-t\p_x,\qquad  \nabla_t :=(\p_x,\p_y^t)^T, \qquad \Delta_t :=\p_x^2 +(\p_y^t)^2,\qquad \Lambda_t:=(-\Delta_t)^{\frac 12 }.
$$
We consistently use lowercase letters to denote quantities in the moving frame and uppercase letters for those in the original (stationary) frame.

To establish stability, we construct a suitable energy functional and employ time-dependent Fourier multipliers \cite{bedrossian2014enhanced, bedrossian2015inviscid}, along with symmetrization techniques \cite{bedrossian21,Dolce,knobel2025sobolev}, now standard in the field. Our analysis introduces novel weights and sharpens previously known stability thresholds. Moreover, we develop a more flexible framework that can be adapted to analyze thresholds in related systems, following an appropriate linear analysis. This is carried out in three steps:

\medskip

\noindent \textbf{Step 1: Linear dynamics.} We begin by analyzing the linearized system and constructing tailored unknowns $(f_1[\phi],q[v])$ that capture the essential dynamics. These variables are linearly stable and satisfy the approximate relations    
 $$
 f_1 \approx  \Lambda^{\tilde \gamma}_t \phi, \qquad v\approx  \Lambda_t^{-(1+\gamma)} \nabla^\perp_t q,
 $$
in suitable Sobolev norms. This choice simplifies the structure of the linear and nonlinear interactions.

\medskip

\noindent \textbf{Step 2: Main nonlinear term.} For the tailored variables, we analyze the nonlinear terms, whose leading-order contribution takes the form
\begin{align*}
    \p_t f_1 &\sim  \Lambda^{\tilde \gamma}_t (\nabla^\perp_t\Lambda_t^{-1-\gamma } q \cdot \nabla_t \Lambda^{-\tilde \gamma}_t f_1 )\sim \Lambda^{\tilde \gamma}_t (\nabla^\perp\Lambda_t^{-1-\gamma } q \cdot \nabla \Lambda^{-\tilde \gamma}_t f_1 ).
\end{align*}
To control this term, we introduce a carefully designed Sobolev weight $A$, leading to the following estimate for the nonlinear interaction:
\begin{equation}\label{NLint}
    \p_t \Vert A f_1\Vert_{L^2 }^2
    \sim  \langle A f_1, [ \Lambda^{\tilde \gamma}_tA,\Lambda_t^{-1+\gamma }\nabla^\perp q] \cdot \nabla \Lambda^{-\tilde \gamma}_tf_1\rangle=: NL .
\end{equation}

\medskip

\noindent\textbf{Step 3, Nonlinear energy estimates:} 
We then construct a total energy $\calE$ in terms of the tailored unknowns, satisfying the integral identity
\begin{align*}
     \calE(t)
    &\sim \calE(1)+\int_1^t   L + NL +LNL \ d\tau ,
    \end{align*}
where $L$ denotes linear contributions, $NL$ the leading-order nonlinear term from \eqref{NLint}, and  $LNL$ lower-order nonlinear interactions. We establish a priori energy estimates and, using local well-posedness, apply a bootstrap argument to show that $\calE(t)$ remains bounded for all time, provided the initial data are sufficiently small. The resulting threshold condition relates the parameter $\mu$ to the size of the initial perturbation.

The threshold in our approach is determined by estimating the main nonlinear contribution
\begin{align*}
    \int_1^t  NL  \ d\tau.
\end{align*}
To control this term, we use carefully constructed, time-dependent Fourier multipliers $A$ and $m$, which are rigorously introduced in Section \ref{seq:freq}. 
 The following theorem provides a precise bound on the main nonlinearity and constitutes the core estimate of our analysis.
\begin{theorem}\label{thm:main}
Let $0< \mu \le \frac 12 $, $N\ge 12$,   $0 \le  \tilde \gamma \le \gamma \le 1$, $r > 0$. Let $f_1,f_2: \T\times \R\to \R^d$, $d\in \N$ and $q: \T\times \R\to \R$, be functions satisfying the bootstrap bounds
\begin{align}
\begin{split}\label{boot1}
   \Vert  A f \Vert^2_{L^\infty_t L^2}  +c_1\int_1^t \mu \Vert \nabla_t A  f\Vert^2_{L^2} +\Vert \sqrt {\tfrac {\p_t m }m} A  f \Vert_{L^2}^2 d\tau&\le C\eps^2_f,\\
   \Vert  A q \Vert^2_{L^\infty_t L^2}  +c_1\int_1^t \mu \Vert \nabla_t A  q\Vert^2_{L^2} +\Vert \sqrt {\tfrac {\p_t m }m} A  q \Vert_{L^2}^2 d\tau&\le C\eps^2_q.
\end{split}
\end{align}
For some constants  $c_1, C>0$ and parameters $\eps_f,\eps_q>0$. Define the \emph{main nonlinearity} as   
\begin{align*}
   NL_{f_1,f_2,q}^\gamma &= \vert\langle A  f_1 , [A  \Lambda_t^{\tilde \gamma }, \nabla^\perp \Lambda_t^{-(1+\gamma)}q  ]  \cdot\nabla \Lambda_t^{-\tilde \gamma }  f_2)\rangle\vert.
\end{align*}
Then, there exists a constant $\tilde C_\gamma > 0$ such that 
\begin{align}\begin{split}\label{NLfest}
   \int_1^t  NL_{f_1,f_2,q}^\gamma d\tau &\le 
     \tilde C_\gamma \mu^{-\frac 13 } \eps^2_f \eps_q, \qquad\qquad \qquad \qquad \ \gamma >0,  \\
     \int_1^t NL_{f_1,f_2,q}^0 d\tau &\le \tilde C_0 \vert\ln(\mu)\vert^{1+r}    \mu^{-\frac 13 } \eps^2_f \eps_q, \qquad\qquad \ \gamma = 0. 
\end{split}\end{align}
\end{theorem}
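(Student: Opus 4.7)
The plan is to bound $NL^\gamma_{f_1,f_2,q}$ by working on the Fourier side, exploiting the commutator structure together with the resonance design of the multiplier $A$. Writing $\sigma := A\Lambda_t^{\tilde\gamma}$ and using
\[
[\sigma,g]h = \calF^{-1}\!\!\int \hat g(\xi_1)\bigl(\sigma(\xi)-\sigma(\xi-\xi_1)\bigr)\hat h(\xi-\xi_1)\,d\xi_1,
\]
I would first perform a paraproduct decomposition of $q$ relative to $f_2$. For the low-frequency $q$ piece the commutator gains a derivative via a first-order Taylor expansion $\sigma(\xi)-\sigma(\xi-\xi_1)=\xi_1\cdot\int_0^1\nabla\sigma(\xi-s\xi_1)\,ds$; the resulting expression becomes essentially a transport by a smooth vector field and can be estimated by pairing $\|Af_1\|_{L^2}$ with $\|\nabla_t A f_2\|_{L^2}$, closing via the dissipation term in \eqref{boot1} and the factor $\|Aq\|_{L^\infty L^2}$. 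The comparable high-high piece is handled the same way. The genuinely hard interaction is the high-low term, where $q$ sits at higher frequency than both $f_1$ and $f_2$.

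For this main term I would split frequency space into a resonant region $|t\xi-\eta|\lesssim 1$ (around the critical times $t\sim\eta/\xi$) and its complement. In the resonant region the multiplier $m$ hidden in $A$ is designed so that $\p_t m/m$ is large, and a pointwise bound on the symbol gradient produces a factor $\sqrt{\p_t m/m}(\xi)\cdot\sqrt{\p_t m/m}(\xi-\xi_1)$ times the symbol itself; a Cauchy--Schwarz then distributes one CK factor to each of $f_1$ and $f_2$, leaving a bounded multiplier on $q$, and time integration via the bootstrap \eqref{boot1} gives a contribution $\lesssim \eps_f^2\eps_q$. Away from resonance the symbol gradient is tame, $|\nabla\sigma|\lesssim \sigma/\langle t\xi\rangle$, so the commutator effectively costs a derivative on $q$; interpolating with the dissipation $\mu\|\nabla_t Af\|_{L^2}^2$ and $\|Af\|_{L^\infty L^2}$ by a Hölder in time of type $L^3_tL^2$ produces the claimed $\mu^{-1/3}$, which is the standard enhanced dissipation scaling.

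The main obstacle is the case $\gamma=0$. With $\Lambda_t^{-1}$ in place of $\Lambda_t^{-1-\gamma}$ the contributions from the echoes at the different resonance times $t\sim\eta/k$ ($k=1,2,\ldots$) are only marginally summable in $k$, so the naive argument loses a factor $\sim\sum k^{-1}\sim|\ln\mu|$. To reach the stated bound $|\ln\mu|^{1+r}\mu^{-1/3}$ I would insert an additional frequency cutoff that trades a small power of the resonance index $k$ for a small power of $|\ln\mu|$, using the CK gain from the bootstrap to absorb the tail summation. The tight interplay between this cutoff, the resonance frequencies, and the weight $m$ is the most delicate ingredient and is reflected in the constant $\tilde C_\gamma$ blowing up as $\gamma\to 0^+$.
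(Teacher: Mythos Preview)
Your architecture --- a paraproduct into transport ($q$ low), reaction ($q$ high), and remainder, together with a resonant/non-resonant split --- is exactly the skeleton the paper uses. However, two of your steps would not close as written.

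The main gap is the transport piece. It is not ``essentially transport by a smooth vector field'' closable by dissipation alone. The weight $A$ contains the resonance multiplier $m_\gamma$, and the commutator $|m_\gamma(k,\eta)-m_\gamma(l,\xi)|$ does not admit a useful Taylor bound in the $k$-variable. In the paper this is the dominant transport sub-term ($T_m$ in Section~\ref{sec:trans}); it needs its \emph{own} resonant/non-resonant split in the $(l,\xi)$-frequency of $f_2$. On $\chi^{NR}(l,\xi)$ one has $|l,\xi|\lesssim|l,\xi-lt|$ and the special design $|m_\gamma(k,\eta)-m_\gamma(l,\xi)|\lesssim\min(1,\mu^{1/3}t)$ (Lemma~\ref{lem:m1}(iv); this is precisely why $m_\gamma$ carries the factor $\min(1,\mu^{1/3}t)$ in its exponent), and then your dissipation argument works. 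But on $\chi^{R}(l,\xi)$ one has $|l,\xi|\sim lt$ while $|l,\xi-lt|$ is small, so $\nabla_t$-dissipation gives nothing; here one needs the structural estimate Lemma~\ref{lem:m1}(vi), which returns a power of $\p_t m_\gamma/m_\gamma$ --- i.e.\ the same CK mechanism you only invoke for the reaction term must also rescue the resonant transport piece. Without it the transport term loses a power of $\mu^{-1/3}$.

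A secondary issue is the bookkeeping in the reaction term. The resonance sits in the $q$-frequency $(k-l,\eta-\xi)$, so the two CK factors $\sqrt{\p_t m/m}$ land on $Af_1$ (at $(k,\eta)$) and on $Aq$ (at $(k-l,\eta-\xi)$), not on both $f$'s; the low-frequency $f_2$ takes the $L^\infty_t L^2$ norm. Even then there is a leftover factor $(t+\mu^{-1/3})e^{-c\mu^{1/3}t}$ coming from the $\min(1,\mu^{1/3}t)^{-1}$ relating $m_\gamma$ to $\tilde m_\gamma$, whose time integral is $\mu^{-1/3}$ --- so the resonant reaction already saturates the threshold, contrary to your claim of an $O(\eps_f^2\eps_q)$ contribution. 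The non-resonant reaction is closed via $\|Af_{\neq}\|_{L^2_tL^2}\lesssim\mu^{-1/6}\eps_f$ (Lemma~\ref{cor:L2}) squared, not an $L^3_t$ interpolation.
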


\begin{remark}
    For $\gamma \in (0,1] $, the constant  $\tilde C_\gamma$ depends continuously on $\gamma$ and satisfies $\lim_{\gamma \to 0} \tilde C_\gamma= \infty$. Nonetheless, the bound for $\gamma=0$ remains finite due to the additional logarithmic loss in \eqref{NLfest}, which is absorbed into the constant $\tilde C_0< \infty$. The constant $\tilde C_\gamma$ depends also on $N$, $r$, $\tilde \gamma$,  $A$ and $m$ but is independent $\eps_f$, $\eps_q$ and $\mu$. 
\end{remark}

\begin{remark}
The structure of the main nonlinearity in Theorem~\ref{thm:main} appears in a wide range of fluid equations. For instance:
\begin{itemize}
\item In the Navier–Stokes equations, it corresponds to $\gamma=1$, $\tilde \gamma=0$;
\item In the Boussinesq equations, to $\gamma=\tilde \gamma=\frac12$;
\item In the MHD equations, to $\gamma=\tilde \gamma=0$.
\end{itemize}
This highlights the flexibility of our framework and the potential for broader applicability.
\end{remark}

\subsection{Literature Overview }\label{sec:literature}
For the Navier–Stokes equations around Couette flow, the first upper bound on the stability threshold in Sobolev spaces was established in \cite{bedrossian2014enhanced}, where the threshold was shown to be $\mu^{1/2}$. This was later improved to the sharp threshold $\mu^{1/3}$ in \cite{masmoudi2022stability}. Using Gevrey regularity, this threshold can be further refined \cite{bedrossian2014enhanced,li2022asymptotic} to be uniform in $\mu$, thanks to the control of the inviscid dynamics established in \cite{bedrossian2015inviscid,dengZ2019,dengmasmoudi2018}.

The Boussinesq equations model a fluid with temperature-induced density variations. Around Couette flow, an affine temperature profile modifies the dynamics and weakens inviscid damping. Linear asymptotics for varying Richardson numbers are analyzed in \cite{yang2018linear}. For large affine temperature gradients, nonlinear stability is shown in the dissipative case by Zhai and Zhao \cite{zhai2023stability}. For small temperature gradients, Zillinger proves nonlinear stability in \cite{zillinger2020enhanced}. Additional results in the dissipative setting can be found in \cite{zhang2023stability,niu2024improved,zillinger2020boussinesq,masmoudi2023asymptotic}. Without diffusion, Gevrey perturbations of size  $\eps$  remain stable up to time $t\sim \eps^{-2}$ \cite{bedrossian21}. For the viscous case with vanishing thermal dissipation, Gevrey 3 spaces are sufficient for stability \cite{masmoudi2022stability2, zillinger2021echo}.

The magnetohydrodynamics  equations describe the coupled evolution of the velocity and magnetic field in a conducting fluid. The magnetic field is advected by the velocity and exerts a Lorentz force in return. In the 3D setting around Couette flow and a constant magnetic field satisfying some Diophantine condition, Liss establishes stability when viscosity and resistivity are equal ($\kappa =\nu$) in \cite{liss2020sobolev}, and this is extended to the case $\mu \approx \kappa$ in \cite{rao2025stability} and to rational magnetic fields in \cite{wang2025stabilitythreshold3dmhd}. In $2d$, Dolce obtains a threshold in the regime $\nu \le \kappa \lesssim \nu^{\frac 13 }$ \cite{Dolce}. When $\kappa \le \nu$, low resistivity becomes destabilizing, leading to magnetic field growth at rate $\nu\kappa^{\frac 13 }$, as shown in \cite{knobel2024sobolev}. Wang and Zhang generalize these results to arbitrary  $\nu,\kappa>0$ in \cite{wang2024stability}. For anisotropic resistivity in the horizontal direction, stability is established in \cite{knobel2025sobolev}.

In the inviscid–resistive regime, Gevrey regularity ensures stability \cite{knobel2023echoes,zhao2024asymptotic}. In the ideal MHD case (no viscosity or resistivity), \cite{NiklasMHD2024} proves stability over times $t \approx \eps^{-1}$ for Gevrey perturbations of size $\eps$. The addition of viscosity shortens the lifespan of stability due to enhanced magnetic growth \cite{knobel2024nr}.

\subsection{Structure of the Article} 
 In Section \ref{sec:NLheu}, we provide a heuristic explanation for the nonlinear echo mechanism. Section \ref{seq:freq} introduces the relevant time-dependent Fourier multipliers, energies, and frequency sets. In Section \ref{sec:NL}, we prove Theorem \ref{thm:main} by decomposing the main nonlinearity into four components: reaction, transport, remainder, and average terms. Each component is estimated through further refined splittings.  Finally, in Sections \ref{sec:estbou} and \ref{sec:estmhd}, we apply Theorem \ref{thm:main} to derive the thresholds for the Boussinesq and MHD equations, respectively. In the appendix, Section \ref{sec:NS} we prove the threshold for the Navier-Stokes equations. In Section \ref{sec:linweight} we prove properties of linear weights.

\subsection{Notations and Conventions}
\label{sec:notation}
Let $v$ be a vector or scalar, then we define 
\begin{align*}
    \langle v \rangle &= \sqrt {1+\vert v \vert}. 
\end{align*}
We write $f\lesssim g$ if $f\le C g$ for a constant independent of all relevant constants. We denote $f\approx g$ iff $f\lesssim g$ and $g\lesssim f$. Constant is independent of all relevant parameters such as $\mu, \, \eps$ and $\delta$. 

We denote the Sobolev and Lebesgue spaces usually $L^p:= L^p(\T\times \R)$ and $H^s:= H^s(\T\times \R)$. For two function $f,g  \in L^2$ we denote the $L^2$ scalar product 
\begin{align*}
    \langle f,g \rangle &=\int_{\T\times \R} f(x,y) \overline g(x,y) d(x,y). 
\end{align*}
For time-dependent functions, we denote the spaces 
\begin{align*}
    L^p_tH^s &= L^p([0,t], H^s).
\end{align*}
For a function $f \in L^2$ we denote its Fourier transform as 
\begin{align*}
    \calF f(k,\eta ):=\hat f(k,\eta) &=\frac 1 {2\pi} \int_{\T\times \R} e^{i(kx+\eta y)}f(x,y) d(x,y).
\end{align*}
Here $k$ or $l$ usually denote the Fourier variable in $x$ and $\eta$ or $\xi$ denote the Fourier variable in $y$. For the sake of simplicity, we often omit writing the $\wedge$. For a function $f:\T\times \R\to \R$ we define its $x$ average and non $x$ average part as 
\begin{align*}
    f_=&= \int f dx, & 
    f_{\neq} &= f-f_=. 
\end{align*}

\section{Nonlinear Heuristics}\label{sec:NLheu}

 In this section, we provide a heuristic for why a threshold of $\mu^{\frac 13 }$ with an additional logarithmic term if $\gamma =0$ is optimal for stability. This heuristic is based on the heuristics done for the Euler equations established in \cite{bedrossian2015inviscid}, where so-called echoes lead to loss of Gevrey regularity. Instead of using Gevrey regularity, we consider small data such that the dissipation suppresses possible echoes. We consider the nonlinear term
\begin{align*}
    \p_t f &\approx   \Lambda^{\tilde \gamma}_t ((\Lambda_t^{-1-\gamma }\nabla^\perp q \cdot \nabla )\Lambda^{-\tilde \gamma}_t f ).
\end{align*}
This corresponds to the nonlinear term of 
\begin{itemize}
    \item Navier-Stokes, if $f= q =w $, $\gamma =1$ and $\tilde \gamma=0$
    \item Boussinesq equation, if $f=\zeta_2$ and $q=\zeta_1 $ with  $\gamma =\tilde \gamma=\frac 12 $
    \item MHD equation, if $f\in \{v,b \}$ and $q=\Lambda_t^{-1} \nabla^\perp_t \cdot v $ with  $\gamma =\tilde \gamma=0 $
\end{itemize}
For these equations, $q$ and $f$ are of the same size in Sobolev regularity. For this heuristic, we assume that $f$ is a scalar and we replace $q$ by $f$. The worst possible growth appears due to a high-low frequency interaction 
\begin{align*}
    \p_t f &\approx   \Lambda^{\tilde \gamma}_t ((\Lambda_t^{-1-\gamma }\nabla^\perp f^{hi}  \cdot \nabla) \Lambda^{-\tilde \gamma}_t f^{low} ),\\
   \p_t f(k,\eta) &\approx \sum_{l } \int d\xi \tfrac {\vert k,\eta-kt \vert^{\tilde \gamma } }{\vert l,\xi-lt  \vert^{\tilde \gamma } }\tfrac {\eta l -k \xi }{\vert k-l,\eta -\xi-(k-l)t \vert^{1+\gamma} } f^{hi}(k-l,\eta-\xi) f^{low}
   (l,\xi).
\end{align*}
For this nonlinear term, there can appear echo chains, see \cite{bedrossian2015inviscid,dengZ2019}. That means that around times $t_k = \frac \eta k $ the $k$ mode forces the $k-1$ mode, leading to growth. Since $t_{k} \le t_{k-1} $, this can yield iterative growth 
\begin{center}
\begin{tikzpicture}
    \node at (1.8,0) (phi) {$f(k,\eta)$};
    \node at (4,0) (p2) {$f(k-1,\eta)$};
    \node at (6,0) (p3) {$\dots $};
    \node at (7.7,0) (p4) {$f(1,\eta).$};
    \draw[->] (phi) -- (p2);
    \draw[->] (p2) -- (p3);
    \draw[->] (p3) -- (p4);

\end{tikzpicture}
\end{center}
If no dissipation is present, this iterative growth can lead to exponential loss of regularity and therefore requires spaces with exponential regularity, Gevrey spaces, for stability. For stability in Sobolev spaces, we use that dissipation suppresses this echo mechanism for small perturbations.

We model the high-low interaction by $f^{low} (-1,\xi)\approx  \eps e^{-\mu^{\frac 13 }t }\delta_{\xi=0} $.  We consider the $k$ mode acting on the $k-1$ mode and assume that $f(k,\eta)$ stays constant. We obtain the equation 
\begin{align*}
    \p_t f(k-1,\eta) 
    &\approx \eps e^{-\mu^{\frac 13 }t} \tfrac {\vert k,\eta-(k-1)t \vert^{\tilde \gamma } }{\langle t  \rangle^{\tilde \gamma }  }\tfrac {\eta  }{k^{1+\gamma} }\frac 1 {\langle t - \frac \eta k\rangle ^{1+\gamma} } f(k,\eta). 
\end{align*}
This term is large around resonant times  $t\approx \tfrac \eta k$, then  $\tfrac {\vert k,\eta-(k-1)t \vert^{\tilde \gamma } }{\langle t  \rangle^{\tilde \gamma }  }\approx 1$ and so
\begin{align*}
    \p_t f(k-1,\eta) 
    &\approx \eps e^{-\mu^{\frac 13 }t} \tfrac {t }{k^{\gamma} }\frac 1 {\langle t - \frac \eta k\rangle ^{1+\gamma} } f(k,\eta). 
\end{align*}
In particular, the interaction grows in time until the enhanced dissipation timescale $t \approx \mu^{-\frac 13 } $,  for which it holds that 
\begin{align*}
    \p_t f(k-1,\eta)
    &\approx \eps\mu^{-\frac 13 }  e^{-\mu^{\frac 13 }t} \tfrac {1 }{k^{\gamma} }\frac 1 {\langle t - \frac \eta k\rangle ^{1+\gamma} } f(k,\eta) .
\end{align*}
After integrating in time, we obtain
\begin{align*}
      f(k-1,\eta) 
    &\approx  f_{in} (k-1,\eta)+  \eps k ^{-\gamma } \mu^{-\frac 13 }  \vert f(k,\eta)\vert   \cdot \begin{cases}
        1, & \gamma >0, \\
        \vert \ln(\mu)\vert,  & \gamma =0 .
    \end{cases}
\end{align*}
Therefore, to suppress echoes we require 
\begin{align*}
    \eps &\lesssim   \mu^{\frac 13 }\begin{cases}
        1, &\gamma>0,\\
        \vert \ln (\mu)\vert^{-1},& \gamma =0,
    \end{cases}
\end{align*}
to ensure stability. In particular, for $\gamma>0$ this corresponds to the threshold of Theorem \ref{thm:main} while for $\gamma=0$ we obtain an additional $\vert \ln(\mu)\vert^r $ for $r>0$.

\section{Construction of Weights and Frequency Sets}\label{seq:freq}
In this section, we define explicitly which weights are admissible and define useful sets for the proof of Theorem \ref{thm:main}. We define the time-dependent Fourier weight
\begin{align*}
     A(k,\eta )&= \langle k ,\eta  \rangle^{N} e^{ c \mu^{\frac 1 3}t\textbf{1}_{k\neq 0}}m^{-1}(k,\eta),
\end{align*}
where 
\begin{align*}
     m(k,\eta )&=  m_\gamma(k,\eta ) M_L(k,\eta ) M_\mu(k,\eta ). 
\end{align*}
The weight $m_\gamma$ and $M_\mu$ are defined in the latter and $M_L$ is a $\gamma$-admissible multiplier in the sense of Definition~\ref{def:admiss}.

We define the \emph{main resonance weights}
\begin{align}\begin{split}\label{eq:m1weight}
    g_\gamma(s) &= c  \begin{cases}
        \tfrac1{\langle s \rangle^{1+\gamma}},& \gamma>0,\\
        \tfrac1{\langle s \rangle \vert \ln (1+\langle s \rangle)\vert^{1+r } },& \gamma=0,
    \end{cases}\\
    \tilde m_\gamma (k,\eta )&= \exp\left(\int_{-\infty}^t \sum_{n\neq 0 }  g_\gamma(\tau -\tfrac \eta n )\langle k-n \rangle ^{-3}d\tau \right),\\
     m_\gamma (k,\eta )&= \exp\left (\min(1,t\mu^{\frac 13 }) \int_{-\infty}^t \sum_{n\neq 0 }  g_\gamma(\tau -\tfrac \eta n )\langle k-n \rangle ^{-3}d\tau \right).
\end{split}\end{align}
The weight $m_\gamma$ is up to our knowledge a new construction and is motivated by the weight $m$ in \cite{knobel2024nr}. We use $m_\gamma$  to bound the reaction term for resonant frequencies in Section \ref{sec:NLheu}. The $\min(1, \mu^{\frac 1 3} t )$ is important to bound the commutator at the transport term, see the $T_m$ term in Subsection \ref{sec:trans}. We define the \emph{Enhanced dissipation weight}
\begin{align*}\begin{split}
\frac {\p_t M_\mu}{M_\mu}(t,k,\eta) &= 
        c^{-1} \tfrac {\mu^{\frac 1 3 }}{1+ \mu^{\frac 2 3 }\vert t-\frac \eta k \vert^{2} },\qquad k\neq 0,\\
        M_\mu(0,k,\eta )&=1,\\
        M_\mu(t,0,\eta)&=M_\mu (t,1,\eta)
\end{split}\end{align*}
This weight is used to obtain enhanced dissipation and is a standard weight, see for example \cite{liss2020sobolev}. For linear weights, we use the following definition  to define admissible weights: 

\begin{defi}\label{def:admiss}
    Let $N\ge 12$, $r>0$ and  $\gamma\in[0,1]$ the weight $M_L  $ is \emph{$\gamma$-admissible}  if there exist a uniform constant $C>0$ such that 
    \begin{itemize}
        \item[i)] \textbf{Monoton increasing} For almost all $t>0$,  $M_L$ satisfies 
        \begin{align*}
            \p_t M_L(t,k,\eta ) \ge 0.
        \end{align*}
        \item[ii)] \textbf{Boundedness}
        \begin{align*}
            C^{-1}\le M_L \le C.
        \end{align*}
\
        \item[iii)] \textbf{Commutator 1} let $\vert l,\xi\vert \ge 8\vert k-l,\eta-\xi\vert $, then  $M_L$ satisfies 
        \begin{align*}\begin{split}
        \tfrac {\vert   l,\xi\vert  }{t^{1+\gamma }}\vert M_L(k,\eta) -M_L(l,\xi)\vert &\le R(t, k,l,\eta,\xi) \cdot  \begin{cases}
            1, & \gamma >0,\\
            \vert \ln(\mu)\vert^{1+r},& \gamma =0,
        \end{cases}
    \end{split}
    \end{align*}
        for 
        \begin{align*}
            R(t, k,l,\eta,\xi)&=C \Big(1+ h(t) \mu^{-\frac 1 3 } + (t+\mu^{-\frac 13 })\sqrt {\tfrac {\p_t m_\gamma }{m_\gamma}(k,\eta )}\sqrt {\tfrac {\p_t m_\gamma }{m_\gamma}(l,\xi) }\\
        &\qquad\qquad \qquad  + \mu^{\frac 1 3 } (\vert l,\xi-lt \vert+\vert k,\eta-kt \vert) \Big)\langle t\mu^{\frac 13 }\rangle^n \vert k-l,\eta-\xi\vert^{N-5}
        \end{align*}
        with  $h(t) \ge 0$ such that $\Vert h\Vert_{L^1_t}\le  C $ and $n\in \N$. 
    \item[iv)] \textbf{Commutator 2} let $k\neq 0$ and $\vert k,\xi\vert \ge 8\vert \eta-\xi\vert $, then 
    \begin{align*}
        \vert M_L(k,\eta) -M_L(k,\xi)\vert &\le C \tfrac 1 k \vert \eta-\xi\vert \langle \eta-\xi\rangle ^{N-5}.
    \end{align*}

    \end{itemize}
\end{defi}
Note that Definition \ref{def:admiss} is multiplicative i.e. if $M_L^i$ are $\gamma_i$-admissible with $i\in\calI$ for a finite index set $\calI$, then $\prod_{i\in \calI}  M_L^i$ is $\min_i \gamma_i$ admissible. 

\begin{lemma}[Properties of $m_\gamma$]\label{lem:m1} Let $\gamma \ge 0$, $(k,\eta), (l,\xi) \in \bbZ\times \R $, then $m_\gamma$ satisfies 
     \begin{enumerate}[label=(\roman*)]
         \item $1\le m_\gamma\le \tilde m_\gamma  \le C$ for a finite constant $C=C(c,\gamma) $.
         \item The weights $\tilde m_{\gamma}$ and $m_{\gamma}$ satisfy
         \begin{align*}
             \frac {\p_t  m_\gamma} { m_\gamma} (k,\eta )&=c\textbf{1}_{t\le \mu^{-\frac 13 }} \mu^{\frac 13 }  F_\gamma(t,k,\eta)+\min(1,t\mu^{\frac 1 3}) \frac {\p_t  \tilde m_\gamma} { \tilde m_\gamma} (k,\eta )\\
             &\ge\min(1,t\mu^{\frac 1 3}) \frac {\p_t  \tilde m_\gamma} { \tilde m_\gamma} (k,\eta ),
         \end{align*}
         for $t\neq \mu^{-\frac1 3 } $ and $F_\gamma(t,k,\eta)$ monotone increasing in time such that  $0\le  F_\gamma \le    C <\infty$ for a constant $C=C(r,\gamma)$.
         \item It holds for $\gamma>0$

         \begin{align*}
              \frac 1 {\langle\frac{\eta}{k}- t\rangle^{1+\gamma} }\le  {\frac {\p_t  \tilde m_\gamma} { \tilde m_\gamma} (l,\xi)}\langle k-l,\eta-\xi \rangle^5
         \end{align*}
and for $\gamma =0$
         \begin{align*}
              \frac 1 {\langle\frac{\eta}{k}- t\rangle\vert \ln(1+\langle\frac{\eta}{k}- t\rangle)\vert^{1+r}}\le  {\frac {\p_t \tilde  m_0} { \tilde m_0} (l,\xi)}\langle k-l,\eta-\xi \rangle^5.
         \end{align*}
         Furthermore, for all $\gamma $ we obtain 
         \begin{align*}
             \sqrt{\frac {\p_t  \tilde m_\gamma} { \tilde m_\gamma} (k,\eta )}\lesssim  \sqrt{\frac {\p_t  \tilde m_\gamma} {\tilde  m_\gamma} (l,\xi)}\langle k-l,\eta-\xi \rangle^5,\\
             \sqrt{\frac {\p_t   m_\gamma} { \tilde m_\gamma} (k,\eta )}\lesssim  \sqrt{\frac {\p_t   m_\gamma} {  m_\gamma} (l,\xi)}\langle k-l,\eta-\xi \rangle^5. 
         \end{align*}
        
         \item It holds that 
         \begin{align*}
             \vert m_\gamma (k,\eta)- m_\gamma(l,\xi) \vert \lesssim \min(1, \mu^{\frac 1 3}t)  .
         \end{align*}
          \item For $\vert l,\xi \vert \ge  \vert k-l,\eta-\xi\vert $ it holds that 
         \begin{align*}
    \vert m_\gamma(k,\eta)-m_\gamma(k,\xi)\vert&\lesssim \tfrac {\vert \eta-\xi\vert}{\langle k\rangle  }. 
\end{align*}
         \item For $\vert l, \xi  \vert \ge \vert k-l, \eta-\xi\vert $ it holds that
        \begin{align*}
             \vert m_\gamma(k,\eta)&- m_\gamma(l,\xi ) \vert
             \lesssim \frac {\langle k-l,\eta-\xi\rangle } {\langle k\rangle } +\frac \eta {\langle k\rangle ^2 } \vert k-l\vert^3  \begin{cases}
              \left( \frac {\p_t  m_\gamma}{  m_\gamma}(k,\eta )\right)^{\tfrac 1 {1+\gamma } }, & \gamma>0,  \\
                  \vert \ln(\mu)\vert^{1+r}\frac {\p_t  m_\gamma }{ m_\gamma}(k,\eta ) + \mu, & \gamma =0.
             \end{cases}
    \end{align*}

     \end{enumerate}
 \end{lemma}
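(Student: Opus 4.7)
The plan is to treat $m_\gamma$ and $\tilde m_\gamma$ as exponentials of the running integral
\[
    G(t,k,\eta) := \int_{-\infty}^t \sum_{n\neq 0} g_\gamma\!\left(\tau - \tfrac \eta n\right) \langle k-n \rangle^{-3}\, d\tau,
\]
so that $\tilde m_\gamma = \exp(G)$ and $m_\gamma = \exp(\min(1, t\mu^{1/3}) G)$, and verify each item pointwise in frequency. For (i) I first check $g_\gamma \in L^1(\R)$: this is immediate for $\gamma > 0$, and for $\gamma = 0$ it uses $r > 0$ together with the substitution $u = \ln(1+\langle s\rangle)$. Combined with $\sum_{n\neq 0}\langle k-n\rangle^{-3}\lesssim 1$ uniformly in $k$, this yields $0 \le G \le C$, whence $1 \le m_\gamma \le \tilde m_\gamma \le e^C$. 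For (ii) I differentiate $m_\gamma$ with the product rule: away from $t = \mu^{-1/3}$ the derivative of $\min(1, t\mu^{1/3})$ is $\mathbf{1}_{t<\mu^{-1/3}}\mu^{1/3}$, so $F_\gamma := G(t,k,\eta)\mathbf{1}_{t<\mu^{-1/3}}$ is monotone increasing and uniformly bounded by (i); the remaining piece is exactly $\min(1, t\mu^{1/3})\partial_t\tilde m_\gamma/\tilde m_\gamma$, and non-negativity of $F_\gamma$ supplies the asserted lower bound.

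The heart of the proof is the pointwise comparison in (iii). Writing
\[
    \frac{\partial_t \tilde m_\gamma}{\tilde m_\gamma}(l,\xi) = \sum_{n\neq 0} g_\gamma\!\left(t - \tfrac \xi n\right)\langle l-n\rangle^{-3},
\]
I must dominate the single resonance $g_\gamma(t - \eta/k)$ by one term of this sum up to the loss $\langle k-l,\eta-\xi\rangle^5$. I choose $n$ as the integer closest to $k\xi/\eta$ when $|\eta| \ge |k|$ and $n = l$ otherwise; in both cases an elementary computation gives $|\xi/n - \eta/k| \lesssim \langle k-l,\eta-\xi\rangle$ together with $\langle l-n\rangle \lesssim \langle k-l,\eta-\xi\rangle$. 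Monotonicity of $g_\gamma$ then yields $g_\gamma(t - \xi/n) \gtrsim g_\gamma(t-\eta/k)\langle k-l,\eta-\xi\rangle^{-(1+\gamma)}$ for $\gamma > 0$, and the corresponding log-corrected bound for $\gamma = 0$; collecting all losses into the polynomial factor $\langle k-l,\eta-\xi\rangle^5$ finishes the pointwise inequality. The two square-root bounds follow by taking square roots, and for the second I use the decomposition in (ii) to replace $\partial_t m_\gamma$ by $\min(1,t\mu^{1/3}) \partial_t \tilde m_\gamma$ plus a harmless $\mu^{1/3}F_\gamma$ term.

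For (iv) I Taylor-expand $m_\gamma - 1 = \min(1,t\mu^{1/3}) G + O((\min(1,t\mu^{1/3}) G)^2)$; since $G$ is bounded, every term carries at least one factor $\min(1, t\mu^{1/3})$, so the pointwise difference is of that order uniformly in frequency. Items (v) and (vi) come from the fundamental theorem of calculus applied to the exponent, writing $m_\gamma(k,\eta) - m_\gamma(l,\xi)$ as an integral of its gradient along a straight path in frequency. For (v), differentiating $G$ in $\eta$ produces $\sum_n \tfrac 1 n g'_\gamma(t - \eta/n)\langle k-n\rangle^{-3}$, and in the regime $|l,\xi| \ge |k-l,\eta-\xi|$ only terms with $|n| \gtrsim \langle k\rangle$ contribute non-trivially, yielding the $\langle k\rangle^{-1}$ gain. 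For (vi), the extra factor $|k-l|^3$ comes from a third-order finite difference of $\langle k-n\rangle^{-3}$, and the time integral of the residual resonance is handled by Young's inequality with exponent $1/(1+\gamma)$ applied to the bound from (iii); this is precisely what extracts $(\partial_t m_\gamma/m_\gamma)^{1/(1+\gamma)}$ when $\gamma > 0$, while for $\gamma = 0$ the logarithmic correction absorbs the endpoint singularity and the residual $\mu$ arises from the $\min(1,t\mu^{1/3})$ prefactor on the small-time boundary. The main obstacle is the combinatorial choice of $n$ in (iii): the logarithmic loss in the $\gamma = 0$ case depends sensitively on not wasting any integer in the approximation of $\eta/k$ by $\xi/n$, and once this step is in hand the remainder of the lemma is careful bookkeeping of the same exponential structure.
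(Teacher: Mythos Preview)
Your treatment of (i), (ii), (iv) is correct and matches the paper, and (v) is essentially right once you note that the $\tau$-integration removes the derivative on $g_\gamma$ (the $\langle k\rangle^{-1}$ gain then comes from $|n|^{-1}\langle k-n\rangle^{-3} \lesssim \langle k\rangle^{-1}\langle k-n\rangle^{-2}$ uniformly in $n$, not from restricting to large $|n|$). For (iii), however, your combinatorial choice of $n$ is unnecessarily elaborate and has unhandled edge cases (e.g.\ $n = l = 0$ when $|\eta| < |k|$, or the nearest integer to $k\xi/\eta$ being $0$). The paper's argument is one line: take $n = k$, which is nonzero since $\eta/k$ appears on the left; the term $g_\gamma(t-\xi/k)\langle l-k\rangle^{-3}$ then sits in the sum defining $\partial_t\tilde m_\gamma/\tilde m_\gamma(l,\xi)$, the inequality $\langle t - \xi/k\rangle \le \langle t - \eta/k\rangle\langle\eta-\xi\rangle$ (with its logarithmic analogue for $\gamma=0$) compares the resonances, and $\langle l-k\rangle^{-3}$ is absorbed by $\langle k-l,\eta-\xi\rangle^5$. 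No approximation of $\eta/k$ by $\xi/n$ is needed.

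The genuine gap is in (vi). The factor $|k-l|^3$ does not come from any ``third-order finite difference of $\langle k-n\rangle^{-3}$''. The paper shifts the summation index by $k-l$ to write $G(t,k,\eta)-G(t,l,\eta)$ as $\sum_n \langle k-n\rangle^{-3}\int_{s_1}^{s_2} g_\gamma(\tau)\,d\tau$ with $s_1 = t - \eta/(n+l-k)$, $s_2 = t - \eta/n$, and the key technical step---absent from your sketch---is the pointwise bound
\[
    \int_{s_1}^{s_2} g_\gamma(\tau)\,d\tau \lesssim \frac{\eta}{k^2}\,|k-l|\Big(\frac{1}{\langle s_1\rangle} + \frac{1}{\langle s_2\rangle}\Big),
\]
obtained by a case split on whether $0 \in [s_1,s_2]$ (mean value theorem plus monotonicity of $g_\gamma$ away from the origin). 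The remaining powers of $|k-l|$ then come from re-indexing and comparing $\langle l-n\rangle^{-3}$ to $\langle k-n\rangle^{-3}$. Second, for $\gamma = 0$ the residual $\mu$ does not arise from the $\min(1, t\mu^{1/3})$ prefactor at small times; it comes from the region $\langle t-\eta/n\rangle \ge \mu^{-1}$, where $\langle t-\eta/n\rangle^{-1} \le \mu$ trivially, while on the complementary region the logarithmic factor converts $\langle t-\eta/n\rangle^{-1}$ into $g_0(t-\eta/n)$. Finally, the H\"older step for $\gamma > 0$ is applied directly to the sum $\sum_{n\neq 0}\langle t-\eta/n\rangle^{-1}\langle k-n\rangle^{-3}$ with exponents $(1+\gamma,\tfrac{1+\gamma}{\gamma})$ and does not invoke (iii).
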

 The proof of Lemma \ref{lem:m1} can be found at the end of this subsection. 
\begin{lemma}[Properties of $M_\mu$]\label{lem:M2} Let $(k,\eta), (l,\xi) \in \bbZ\times \R $, then $M_\mu$ satisfies 
\begin{enumerate}[label=(\roman*)]
\item It holds that $\p_t M_\mu\ge 0$ and  $1\le M_\mu \le \exp(c^{-1} \pi )$. 
    \item For $k\neq 0 $ it holds that 
    \begin{align*}
    \mu^{\frac 1 3 } &\le  c\tfrac {\p_t M_\mu} {M_\mu}(k,\eta )  + 2c \mu(1+\vert t-\tfrac \eta k \vert^2).
\end{align*}
    \item For $k\neq 0$ and  $\vert k,\xi \vert \ge  \vert \eta-\xi\vert $ it holds the estimate 
         \begin{align*}
    \vert M_\mu(k,\eta)-M_\mu(k,\xi)\vert&\lesssim \tfrac {\vert \eta-\xi\vert}{k }. 
    \end{align*}
    \item For $\vert l,\xi \vert \ge  \vert k-l,\eta-\xi\vert $ it holds the estimate 
\begin{align*}
    \vert M_\mu(k,\eta)-M_\mu(l,\xi)\vert\lesssim \mu^{\frac 13 } (\vert l,\xi-lt\vert +t ) \langle \eta-\xi,k-l\rangle^2.
\end{align*}

     \end{enumerate}
     In particular, $M_\mu$ is admissible in the sense of Definition \ref{def:admiss}.

\end{lemma}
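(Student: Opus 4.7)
The approach is to exploit the explicit integral form of $M_\mu$. For $k\neq 0$, the substitution $u = \mu^{1/3}(\tau - \eta/k)$ computes $\log M_\mu$ in closed form as a sum of two arctangents,
\[
\log M_\mu(t,k,\eta) \;=\; c^{-1}\!\left[\arctan\!\left(\mu^{1/3}(t-\tfrac{\eta}{k})\right) + \arctan\!\left(\mu^{1/3}\tfrac{\eta}{k}\right)\right],
\]
so $\log M_\mu$ depends on $(k,\eta)$ only through the ratio $a = \eta/k$. From this closed form, (i) is immediate: $\p_t M_\mu/M_\mu \ge 0$ by construction, and the sum of arctangents is bounded by $\pi$, giving $M_\mu \le \exp(c^{-1}\pi)$. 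The case $k = 0$ is covered by the convention $M_\mu(t,0,\eta) = M_\mu(t,1,\eta)$. For (ii), I would apply the elementary inequality $\tfrac{1}{1+x}+x \ge 1$ with $x = \mu^{2/3}(t-\eta/k)^2$; multiplying by $\mu^{1/3}$ yields $\mu^{1/3} \le \tfrac{\mu^{1/3}}{1+\mu^{2/3}(t-\eta/k)^2} + \mu(t-\eta/k)^2$, which after adjusting the constant $c$ gives the claim.

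For (iii) and (iv), the ratio-dependence and the fact that $\psi(a) := c^{-1}[\arctan(\mu^{1/3}(t-a)) + \arctan(\mu^{1/3}a)]$ is $2c^{-1}\mu^{1/3}$-Lipschitz in $a$ reduce everything to estimates of $|\eta/k - \xi/l|$. Since $|\log M_\mu| \le c^{-1}\pi$, the identity $e^a - e^b = e^b(e^{a-b}-1)$ upgrades these to $|M_\mu(k,\eta) - M_\mu(l,\xi)| \lesssim |\log M_\mu(k,\eta) - \log M_\mu(l,\xi)|$. For (iii), $\p_\eta a = 1/k$ gives $|\p_\eta M_\mu| \lesssim \mu^{1/3}/|k| \lesssim 1/|k|$, and (iii) follows by integrating from $\xi$ to $\eta$. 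For (iv), I would use the algebraic identity
\[
\tfrac{\eta}{k} - \tfrac{\xi}{l} \;=\; \tfrac{(\eta-\xi)l - (k-l)\xi}{kl} \;=\; \tfrac{(\eta-\xi)l - (k-l)(\xi-lt)}{kl} - \tfrac{(k-l)t}{k},
\]
which, combined with the Lipschitz bound, reduces the task to estimating three fractions under the high-low assumption $|l,\xi| \ge |k-l,\eta-\xi|$. When $|l| \gtrsim |k-l|$, one has $|k|\approx|l| \ge 1$, and all three terms are bounded by $(|l,\xi-lt|+t)\langle k-l,\eta-\xi\rangle$; in the complementary regime $|\xi| \gtrsim |k-l|$, one uses $|\xi| \le |\xi - lt| + t|l|$ to absorb $|\xi|$ into $|l,\xi-lt|+t$. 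The boundary case $l = 0$ reduces to $l=1$ by the defining convention.

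The main obstacle is (iv): the combinatorics of the case split must deliver a bound of the precise shape $\mu^{1/3}(|l,\xi-lt|+t)\langle k-l,\eta-\xi\rangle^2$, with the quadratic power in $\langle k-l,\eta-\xi\rangle$ providing exactly the slack needed to absorb ratios like $|k-l|/|l|$ from the non-dominant case. Once (i)--(iv) are established, admissibility in the sense of Definition~\ref{def:admiss} follows by direct packaging: the monotonicity and boundedness parts are (i), commutator 2 is (iii), and commutator 1 is obtained from (iv) with $h\equiv 0$ by absorbing the prefactor $|l,\xi|/t^{1+\gamma}$ into the $\mu^{1/3}(|l,\xi-lt|+|k,\eta-kt|)$ and $\langle t\mu^{1/3}\rangle^n$ slots of $R$ (using that $\gamma \in [0,1]$ makes the $t^{1+\gamma}$ denominator benign after splitting on whether $t\mu^{1/3} \le 1$ or not).
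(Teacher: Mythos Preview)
Your proposal is correct. For (i)--(iii) it coincides with the paper's argument: the paper simply says (i) and (ii) ``hold by definition'' and derives (iii) from the mean value theorem via $\p_\eta M_\mu = -\tfrac{1}{k}\p_t M_\mu$, which is exactly your Lipschitz-in-$a$ observation. Your treatment of (ii) via $\tfrac{1}{1+x}+x\ge 1$ gives $\mu^{1/3}\le c\tfrac{\p_t M_\mu}{M_\mu}+\mu|t-\tfrac{\eta}{k}|^2$, which matches the stated inequality up to the harmless constant in front of the dissipative term.

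For (iv) the route genuinely differs. You pass through the closed arctangent formula to obtain a global Lipschitz bound $|M_\mu(k,\eta)-M_\mu(l,\xi)|\lesssim \mu^{1/3}|\tfrac{\eta}{k}-\tfrac{\xi}{l}|$ and then estimate the ratio difference algebraically. The paper instead case-splits on resonance: when $|\tfrac{\xi}{l}|\ge 2t$ it bounds the integral difference trivially by $t$ and uses $|l,\xi|\lesssim |l,\xi-lt|$; when $|\tfrac{\xi}{l}|\le 2t$ it shifts the domain of integration to reduce to $|\tfrac{\eta}{k}-\tfrac{\xi}{l}|$ and then exploits $|\xi|\le 2|l|t$ to get $\tfrac{|l,\xi|^2}{l^2t}\lesssim t$. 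Your approach is more direct and exploits the explicit formula; the paper's split is more robust (it would apply to weights without closed form) and has the minor advantage that it already bounds $\tfrac{|l,\xi|}{t}|M_\mu(k,\eta)-M_\mu(l,\xi)|$ rather than $|M_\mu(k,\eta)-M_\mu(l,\xi)|$ alone, which makes the packaging into commutator~1 of Definition~\ref{def:admiss} immediate. One small simplification: your case split on $|l|\gtrsim|k-l|$ versus its complement is unnecessary, since $k,l\in\Z\setminus\{0\}$ already forces $|k|,|l|\ge 1$, and each of the three terms in your algebraic identity is then bounded by $(|l,\xi-lt|+t)\langle k-l,\eta-\xi\rangle^2$ without further distinction.
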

 The proof of Lemma \ref{lem:M2} can be found in the appendix.

\begin{lemma}\label{cor:L2}
Under the assumption of Theorem \ref{thm:main} we obtain the following estimates
\begin{align*}
    \Vert A f_{\neq }\Vert_{L^2L^2 }&\le c \mu^{-\frac 1 6} \eps_f, \\
    \Vert A q_{\neq }\Vert_{L^2L^2 }&\le c \mu^{-\frac 1 6} \eps_q.
\end{align*}
\end{lemma}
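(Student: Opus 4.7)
The plan is to extract an enhanced-dissipation lower bound from the multiplier $M_\mu$ and combine it with the two dissipative quantities already controlled by the bootstrap \eqref{boot1}, namely $\mu\Vert\nabla_t Af\Vert^2_{L^2}$ and $\Vert\sqrt{\p_t m/m}\,Af\Vert^2_{L^2}$. The rate $\mu^{-1/6}$ will arise as the $L^2_t$ square root of the standard enhanced-dissipation rate $\mu^{-1/3}$.

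The central observation I would use is Lemma \ref{lem:M2}(ii): for $k\neq 0$ one has the pointwise lower bound
\[
\mu^{\frac13} \le c\,\tfrac{\p_t M_\mu}{M_\mu}(k,\eta) + 2c\mu\bigl(1+\vert t-\tfrac{\eta}{k}\vert^2\bigr).
\]
Since $m=m_\gamma M_L M_\mu$ with each factor non-decreasing in $t$ (by Lemma \ref{lem:m1}(ii), Definition \ref{def:admiss}(i), and Lemma \ref{lem:M2}(i)), logarithmic differentiation gives $\p_t M_\mu/M_\mu \le \p_t m/m$. Using $|k|\ge 1$ for $k\neq 0$, the Fourier symbol satisfies $1+(t-\eta/k)^2 \le k^2+(\eta-kt)^2$, which is precisely $|\nabla_t|^2$ in Fourier.

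Multiplying the pointwise bound by $|\widehat{Af_{\neq}}(k,\eta)|^2$, summing over $k\neq 0$, and integrating in $\eta$ then produces the pointwise-in-time inequality
\[
\mu^{\frac13}\Vert Af_{\neq}\Vert_{L^2}^2 \le c\,\bigl\Vert \sqrt{\tfrac{\p_t m}{m}}\,Af\bigr\Vert_{L^2}^2 + 2c\,\mu\Vert \nabla_t Af\Vert_{L^2}^2.
\]
Integrating in $\tau$ from $1$ to $t$ and inserting \eqref{boot1} bounds the right-hand side by $(c+2c/c_1)C\eps_f^2$, so a square root yields the claimed bound on $\Vert Af_{\neq}\Vert_{L^2L^2}$. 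The estimate for $q$ is identical with $\eps_f$ replaced by $\eps_q$.

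There is no serious obstacle here; the argument is a short Fourier-side computation. The only point that requires some care is the bookkeeping check that $m_\gamma$, $M_L$, and $M_\mu$ are all non-decreasing in $t$, which is precisely what upgrades the $M_\mu$-dissipation provided by Lemma \ref{lem:M2} to the full $m$-dissipation supplied by the bootstrap.
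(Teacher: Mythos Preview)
Your proposal is correct and follows exactly the approach the paper indicates: the paper's proof is the single sentence ``This is a direct consequence of Lemma \ref{lem:M2} ii) and \eqref{boot1},'' and you have filled in precisely the natural details---the monotonicity of each factor of $m$ to pass from $\p_t M_\mu/M_\mu$ to $\p_t m/m$, and the Fourier-side inequality $1+(t-\eta/k)^2\le k^2+(\eta-kt)^2$ for $|k|\ge1$ to absorb the second term into $\mu\Vert\nabla_t Af\Vert_{L^2}^2$.
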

\begin{proof}
    This is a direct consequence of Lemma \ref{lem:M2} ii) and \eqref{boot1}. 
\end{proof}

\subsection{Frequency Sets}

In this subsection, we define useful sets for frequency splitting. We define the \textit{reaction, transport, remainder and average sets} as
\begin{align}
    \label{def:SR}S_R &=\left\{ ((k,\eta),(l ,\xi )) \in (\mathbb{Z}\times \mathbb{R})^2\, : \ \vert k-l,\eta -\xi \vert\ge 8 \vert l, \xi\vert, \ k \neq l \right\}, \\
    \label{def:ST}S_T &=\left\{ ((k,\eta),(l ,\xi )) \in (\mathbb{Z}\times \mathbb{R})^2\, : \ 8\vert k-l,\eta -\xi \vert\le  \vert l, \xi\vert, \ k \neq l  \right\},  \\
    \label{def:ScalR}S_\calR  &=\left\{ ((k,\eta),(l ,\xi )) \in (\mathbb{Z}\times \mathbb{R})^2 \, : \ \tfrac 1 8  \vert l, \xi\vert\le \vert k-l,\eta -\xi \vert\le  8 \vert l, \xi\vert, \ k \neq l \right\},\\
    \label{def:S=}S_=&= \left\{ ((k,\eta),(l ,\xi )) \in (\mathbb{Z}\times \mathbb{R})^2 \, :  \ k =l \right\},
\end{align}
the \textit{average reaction and  transport  sets} as
\begin{align}
    \label{def:SR=}S_{R,=}&=\{((k,\eta),(l ,\xi )) \in (\mathbb{Z}\times \mathbb{R})^2: \ \vert \eta-\xi \vert \ge  \vert k, \xi \vert, \ k=l\},\\
    \label{def:ST=}S_{T,=}&=\{((k,\eta),(l ,\xi )) \in (\mathbb{Z}\times \mathbb{R})^2: \  \vert \eta-\xi \vert <  \vert k,  \xi \vert, \ k=l\}.
\end{align}
The resonance cut multipliers are denoted as 
\begin{align}\begin{split}\label{eq:freqcut}
    \chi^{R}(k,\eta) &= \textbf{1}_{\frac 12 t \le \frac \eta k \le 2t },\\
    \chi^{NR}(k,\eta) &= 1-\chi^{R}(k,\eta).
    \end{split}
\end{align}

\subsection{Main Weight Estimates}\label{seq:Appweights}
In this subsection, we prove Lemma \ref{lem:m1}. To prove i) we use that for $\gamma>0$, 
\begin{align*}
    \int  \sum_{l\neq 0} \tfrac 1 {\langle\frac{\eta}{n}- \tau \rangle^{1+\gamma  }}\langle k-n \rangle ^{-3}d\tau \le (\pi+\tfrac 2 \gamma ) \sum_{n\neq 0}\langle k-n\rangle^{-3} \le \pi (\pi+\tfrac 2 \gamma ).
\end{align*}
For $\gamma=0 $ we use that $g_0(s)=\tfrac 1 {\langle s \rangle \vert \ln(1+\langle s \rangle)\vert^{1+r} } $ is integrable and thus 
\begin{align*}
    \int\sum_{n\neq 0} \tfrac 1 {\langle\frac{\eta}{n}- \tau \rangle\vert \ln(1+\langle \tau-\frac \eta n\rangle\vert^{1+r}  }\langle k-n \rangle ^{-3}d\tau < \infty, 
\end{align*}
independent of $\mu$. For ii), if $t \neq \mu^{-\frac 13 }$ we calculate 
\begin{align*}
    \p_t  m_\gamma (k,\eta )&= \p_t \exp\left ( c \min(1,t\mu^{\frac 13 }) \int_{-\infty}^t \sum_{n\neq 0 }  g_\gamma(\tau -\tfrac \eta n )\langle k-n \rangle ^{-3}d\tau \right)\\
     &=\big( c \mu^{\frac 13 } \textbf{1}_{t\le \mu^{-\frac 13 }}\underbrace{\int_{-\infty}^t \sum_{n\neq 0 }  g_\gamma(\tau -\tfrac \eta n )\langle k-n \rangle ^{-3}d\tau}_{=:F(t,k,\eta) }\\
     &\qquad +\min(1,t\mu^{\frac 13 })\underbrace{\sum_{n\neq 0 }  g_\gamma(\tau -\tfrac \eta n )\langle k-n \rangle ^{-3}}_{ \frac{\p_t\tilde m} {\tilde  m}(k,\eta)  } m_\gamma(k,\eta)
\end{align*}
where $F(t,k,\eta)$ is bounded and increasing. Property iii) is a consequence of $\langle t-\tfrac \xi k \rangle \le\langle t-\frac \eta k \rangle \langle \eta-\xi  \rangle $ for $\gamma>0$. For $\gamma=0$ we additionally use that for all $x,y\in \R$ it holds that  $\ln(1+\langle y\rangle )\langle x-y\rangle -  \ln(1+\langle x\rangle )\ge 0 $ and therefore 
\begin{align*}
    \ln(1+\langle t-\tfrac \eta k\rangle )\le \ln(1+\langle t-\tfrac \eta k\rangle )\langle \eta-\xi\rangle. 
\end{align*}
The iv) is a direct consequence of the definition of $m_\gamma$. For v) we estimate 
\begin{align*}
    \frac {\p_\eta  m_\gamma}{m_\gamma}(k,\eta) &=\min(1,\mu^{\frac 13 } t ) \p_\eta \int_{-\infty}^t  \sum_{n\neq 0}g(t-\tfrac \eta n)  \langle k-n \rangle^{-3} d\tau \\
    &=\min(1,\mu^{\frac 13 } t ) \sum_{n\neq 0}  \int^t_{-\infty} \p_\eta g(t-\tfrac \eta n)\langle k-n \rangle^{-3} d\tau \\
    &=-\min(1,\mu^{\frac 13 } t ) \sum_{n\neq 0}  \int^t_{-\infty} \tfrac 1 n \p_t g(t-\tfrac \eta n)\langle k-n \rangle^{-3} d\tau 
\end{align*}
and thus 
\begin{align*}
    \left\vert \frac {\p_\eta  m_\gamma }{m_\gamma}(k,\eta)\right\vert &\le \tfrac 1 k  \min(1,\mu^{\frac 13 } t ) \Vert g\Vert_{L^{\infty}}\sum_{n\neq 0}  \langle k-n \rangle^{-2} d\tau \lesssim \tfrac 1 k\min(1,\mu^{\frac 13 } t ) . 
\end{align*}
Thus, we infer v) by the mean value theorem. 

To prove vi), we split $\vert m_\gamma(k,\eta)- m_\gamma(l,\xi ) \vert\le \vert m_\gamma(k,\eta)- m_\gamma(l,\eta ) \vert+\vert m_\gamma(l,\eta)- m_\gamma(l,\xi ) \vert$. By v) $\vert m_\gamma(l,\eta)- m_\gamma(l,\xi ) \vert$ satisfies the bound of vi). Therefore, we establish the bound 
\begin{align*}
             \vert m_\gamma(k,\eta)- m_\gamma(l,\eta ) \vert
             &\lesssim \frac {\vert k-l\vert} {\langle k\rangle } +\frac \eta {\langle k\rangle^2 } \vert k-l\vert^3 \cdot \begin{cases}
              \left( \tfrac {\p_t  m_\gamma}{  m_\gamma}(k,\eta) \right)^{\tfrac 1 {1+\gamma } }, & \gamma>0,  \\
                  \vert \ln(\mu)\vert^{1+r}\tfrac {\p_t  m }{ m}(k,\eta ) + \mu,  & \gamma =0.
             \end{cases}
    \end{align*}
    For $\vert k-l\vert \ge \tfrac 18 \vert k\vert $ or $k-l=0$ the claim is clear, therefore we consider $1\le \vert k-l\vert \le\tfrac 18 \vert k\vert $. 
We use $\vert e^x -1 \vert \le \vert x \vert e^x $ to estimate 
\begin{align}\begin{split}\label{eq:mgg}
             &\quad \vert m_\gamma(k,\eta)- m_\gamma(l,\eta ) \vert \\
             &\lesssim   \min(1, \mu^{\frac 1 3} t )\left\vert \sum_{n\neq 0 }\int_{-\infty }^td\tau  g_\gamma(\tau-\tfrac \eta n) \langle k-n \rangle ^{-3}- g_\gamma(t-\tfrac \eta n)\langle l-n \rangle ^{-3}\right\vert \\
             &=\min(1, \mu^{\frac 1 3} t )\left\vert\sum_{n }\int_{-\infty }^t d\tau (\textbf{1}_{n\neq 0}g_\gamma(\tau-\tfrac \eta n) - \textbf{1}_{n+l-k\neq 0}g_\gamma(\tau-\tfrac \eta {n+l-k }))\langle k-n \rangle ^{-3}\right\vert \\
             &=\min(1, \mu^{\frac 1 3} t )\left\vert\sum_{n}\int_{s_1}^{s_2}d\tau g_\gamma(\tau)\langle k-n \rangle ^{-3}\right\vert 
   \end{split} \end{align}
with
    \begin{align*}
        s_1&=\begin{cases}
            t-\frac \eta {n+l-k }, & n+l-k \neq 0, \\
            -\infty,& n+l-k =0, 
        \end{cases}\\
        s_2&=\begin{cases}
            t-\frac \eta {n }, & n \neq 0, \\
            -\infty,& n =0.
        \end{cases}
    \end{align*}   
For  $\vert n-k\vert\ge \tfrac k {2\vert k-l \vert}$ we estimate the sum directly 
         \begin{align}
             \sum_{\substack{\vert n-k\vert\ge \tfrac k {2\vert k-l \vert}}}\int_{s_1}^{s_2} g_\gamma(\tau)\langle k-n \rangle ^{-3}d\tau
             &\le 2\frac {\langle  k-l\rangle } {\langle k\rangle } \sum_{n}  \langle k-n \rangle ^{-2} \Vert g \Vert _{L^1}\lesssim \frac {\langle  k-l\rangle } {\langle k\rangle } \label{eq:g1}
         \end{align}
which is consistent with vi). \\
For  $\vert n-k\vert< \tfrac k {2\vert k-l \vert}$, using $1\le \vert k-l\vert \le \tfrac 18 \vert k\vert $ we obtain $n \neq 0$ and $n+k-l \neq 0 $ and thus $s_1=t-\frac \eta {n+l-k }$ and  $s_2=t-\frac \eta {n }$. First, we establish the bound 
\begin{align}
    \int_{s_1}^{s_2} g_\gamma(\tau) d\tau \lesssim  \frac \eta {k^2 } \vert k-l\vert \left(\tfrac 1 {\langle s_1\rangle  }+\tfrac 1 {\langle s_2\rangle  }\right)\label{eq:intg}
\end{align}
by distinguishing between $0 \not \in  [s_1,s_2] $ and $0  \in  [s_1,s_2] $. Let $0 \not \in  [s_1,s_2] $, then  by mean value theorem there exists a $s\in [s_1,s_2]$
\begin{align*}
    \int_{s_1}^{s_2} g_\gamma(\tau)&= g(s) \eta \left(\frac 1 n -\frac 1{n+l-k}\right).
\end{align*}
Since $g$ is monotone increasing on $(-\infty , 0)$ and  monotone decreasing $(0,\infty)$ we obtain by $0 \not \in  [s_1,s_2] $ that 
\begin{align*}
    g(s)\le g(s_1)+ g(s_2 ) \lesssim \tfrac 1{\langle s_1 \rangle }+\tfrac 1{\langle s_2 \rangle }.
\end{align*}
From $\vert n-k\vert\le \frac k{2\vert k-l\vert }$ and $\vert k-l\vert\ge 1$ we infer $n\ge\frac12  k$ and therefore 
\begin{align*}
   \int_{s_1}^{s_2} g_\gamma(\tau) d\tau &\lesssim  \left(\tfrac 1{\langle s_1 \rangle }+\tfrac 1{\langle s_2 \rangle }\right) \frac \eta {k^2 } \vert k-l\vert,
\end{align*}
which is consitent with \eqref{eq:intg}.
Let $0 \in  [s_1,s_2] $, then we obtain 
\begin{align*}
    \langle s_2 \rangle \le \langle s_2-s_1 \rangle\lesssim   \frac \eta {k^2 } \vert k-l\vert.
\end{align*}
Therefore,
\begin{align*}
    \int_{s_1}^{s_2} g_\gamma(\tau)&\le\Vert g \Vert_{L^1}\le  \frac \eta {k^2 } \vert k-l\vert \tfrac 1 {\langle s_2\rangle  }\Vert g \Vert_{L^1} .
\end{align*}
which is consitent with \eqref{eq:intg} and therefore \eqref{eq:intg} holds. Using \eqref{eq:intg}  we estimate 
\begin{align*}
    \sum_{\substack{n\\ \vert n-k\vert\ge \tfrac k {2\vert k-l \vert}}}\int_{s_1}^{s_2} g_\gamma(\tau)\langle k-n \rangle^{-3}d\tau 
     &\lesssim   \frac \eta {k^2 } \vert k-l\vert \sum_{n}\left( \tfrac {\textbf{1}_{n\neq 0 }} {\langle t-\frac \eta n \rangle  }+\tfrac {\textbf{1}_{n+l-k\neq 0 }} {\langle t-\frac \eta {n+l-k} \rangle  }\right)\langle k-n \rangle^{-3}\\
     &\lesssim   \frac \eta {k^2 } \vert k-l\vert^4 \sum_{\substack{n\neq 0 }} \tfrac 1 {\langle t-\frac \eta n \rangle  }\langle k-n \rangle^{-3}.
\end{align*}

Now we distinguish between $\gamma >0$ and $\gamma =0 $.  For $\gamma >0$ we estimate  
\begin{align*}
    \sum_{\substack{n\neq 0 }} \tfrac 1 {\langle t-\frac \eta n \rangle  }\langle k-n \rangle^{-3}&\le \left(\sum_{\substack{n\neq 0 }} \tfrac 1 {\langle t-\frac \eta n \rangle^{1+\gamma}  }\langle k-n \rangle^{-3}\right)^{\frac 1{1+\gamma}}\cdot \left(\sum_{\substack{n\neq 0 }} \langle k-n \rangle^{-3}\right)^{\frac \gamma {1+\gamma}}\\
    &\lesssim \left(\sum_{\substack{n\neq 0 }} \tfrac 1 {\langle t-\frac \eta n \rangle^{1+\gamma}  }\langle k-n \rangle^{-3}\right)^{\frac 1{1+\gamma}}=\left( \tfrac {\p_t  \tilde m_\gamma}{  \tilde m_\gamma}(k,\eta )\right)^{\tfrac 1 {1+\gamma } }\\
\end{align*}
to estimate

\begin{align*}
     \sum_{\substack{n\\\vert n-k\vert\ge \tfrac k {2\vert k-l \vert}}}\int_{s_1}^{s_2} g_\gamma(\tau)\langle k-n \rangle^{-3}d\tau 
     & \lesssim  \frac \eta {k^2 }\vert k-l\vert^4 \frac 1 {\min(1,t\mu^{\frac 13 } )} \left( \tfrac {\p_t   m_\gamma}{   m_\gamma}(k,\eta )\right)^{\tfrac 1 {1+\gamma } }
\end{align*}
together with \eqref{eq:g1} and \eqref{eq:mgg} this yields iv) for $\gamma >0$. 

{For $\gamma =0$,} by distinguish between $\langle t-\frac \eta n \rangle\le \mu^{-1}$ and $\langle t-\frac \eta n \rangle\ge \mu^{-1}$ we estimate 
\begin{align*}
    \frac 1 {\langle t-\frac \eta n \rangle }&\lesssim  \frac {\vert \ln(\mu)\vert^{1+r} } {\langle t-\frac \eta n \rangle \vert \ln(1+\langle t-\frac \eta n \rangle)\vert^{1+r}  } + \mu .
\end{align*}
Therefore, 
\begin{align*}
    \frac \eta {k^2 } \vert k-l\vert^3\sum_{\substack{n\neq 0 }} \tfrac 1 {\langle t-\frac \eta n \rangle  }\langle k-n \rangle^{-3}&\lesssim 
    \frac \eta {k^2 }\left( \vert \ln(\mu)\vert^{1+r}\frac {\p_t \tilde m }{\tilde m}(k,\eta ) + \mu \right)\vert k-l\vert^3\\
    &\lesssim 
    \frac \eta {k^2 }\left( \frac {\vert \ln(\mu)\vert^{1+r}}{\min(1,t\mu^{\frac 13 })}\frac {\p_t  m }{ m}(k,\eta )  +  \mu \right)\vert k-l\vert^3
\end{align*}
together with \eqref{eq:g1} and \eqref{eq:mgg}  this  yields vi) for $\gamma =0$.

\section{Proof of the Main Theorem}\label{sec:NL}
 In this section, we prove Theorem \ref{thm:main}, i.e. we prove the nonlinear estimates \eqref{NLfest}. By slight abuse of notation, we omit the subscripts $f_1$ and $f_2$ and just write $f$. By Plancherel's theorem 
\begin{align*}
   NL_{f_1,f_2,a}^\gamma &= \vert\langle A  f , [A  \Lambda_t^{\tilde \gamma }, \nabla^\perp \Lambda_t^{-(1+\gamma)}  q]  \cdot \nabla \Lambda_t^{-\tilde \gamma }  f)\rangle\vert\\
   & \le \sum_{k,l  }\iint d(\xi,\eta ) (\textbf{1}_{S_R}+\textbf{1}_{S_T}+\textbf{1}_{S_\calR}+\textbf{1}_{S_=}) 
   \tfrac {\vert \eta l -k\xi \vert} {\vert k-l ,\eta-\xi-(k-l)t \vert^{1+\gamma}}\\
   & \qquad \qquad \qquad \cdot \left\vert \tfrac {\vert k,\eta-kt\vert^{\tilde \gamma }}{\vert l,\xi-lt\vert^{\tilde \gamma }} A(k,\eta)-A(l,\xi)\right\vert \vert A f\vert(k,\eta ) \vert  f\vert(l,\xi )\vert  q\vert(k-l,\eta-\xi )\\
   &= R +T+ \calR +NL^=,
\end{align*}
with the sets $S_R,S_T$, $S_\calR$ and $S_=$ defined in \eqref{def:SR}-\eqref{def:S=}. 
\subsection{Reaction Term} In this subsection, establish the following estimate on the reaction term 
\begin{align}
    \int_1^t   R d\tau  \lesssim 
    \begin{cases}
        \mu^{-\frac 1 3 } \eps^2_f \eps_q, & \gamma >0,\\
        \vert \ln(\mu)\vert^{1+r} \mu^{-\frac 1 3 }  \eps^2_f \eps_q, & \gamma =0,
    \end{cases}\label{eq:Rest}
\end{align}
which is consistent with  \eqref{NLfest}. To prove the estimate  \eqref{eq:Rest} we split 
\begin{align}\begin{split}
     R&\le \sum_{\substack{k,l\\ k-l,l\neq 0}} \iint d(\eta,\xi) \textbf{1}_{S_R } (\chi^R+\chi^{NR})(k-l,\eta-\xi) \tfrac {\vert \eta l-k\xi\vert \vert k,\eta-kt \vert ^{\tilde \gamma }} {\vert k-l,\eta-\xi-(k-l)t \vert^{1+\gamma  }t ^{\tilde \gamma  }} \\
     &\qquad \qquad \qquad \qquad \cdot \vert A f\vert (k,\eta ) \vert q\vert (k-l,\eta-\xi) \vert \Lambda f\vert (l,\xi)\\
     &\quad +\sum_{\substack{k\\ k\neq 0}} \iint d(\eta,\xi) \textbf{1}_{S_R }  \tfrac {\vert k\vert \vert \xi\vert^{1-\tilde \gamma }  \vert k,\eta-kt \vert ^{\tilde \gamma  }} {\vert k,\eta-\xi-kt \vert^{1+\gamma }} \vert A f\vert (k,\eta ) \vert q\vert (k,\eta-\xi) \vert f\vert (0,\xi)\\
     &\quad +\sum_{\substack{k,l\\ k-l\neq 0} }\iint d(\xi,\eta )  \textbf{1}_{S_R }  \tfrac {\vert \eta l -k\xi \vert} {\vert k-l ,\eta-\xi-(k-l)t \vert^{1+\gamma}} \vert A f_1\vert(k,\eta ) \vert A f_2\vert(l,\xi )\vert  q\vert(k-l,\eta-\xi )\\
      &= R_R+R_{NR}+R_=+R_{low} .\label{eq:R}
\end{split}\end{align}
The resonance cuts $\chi^R$ and $\chi^{NR }$ are defined in \eqref{eq:freqcut}.

\textbf{Bound on $R_{R}$:} On $\chi^R(k-l,\eta-\xi) $ we estimate 
\begin{align*}
    \tfrac {\vert \eta l-k \xi \vert }{\vert k-l \vert}&\le t \vert l,\xi \vert,  \\
    \tfrac {\vert k,\eta-kt\vert  }{\vert k-l \vert}&\le t \vert l,\xi \vert^2.
\end{align*}
Therefore,
\begin{align*}
    \tfrac {\vert \eta l-k\xi\vert \vert k,\eta-kt \vert ^{\tilde \gamma }} {\vert k-l,\eta-\xi-(k-l)t \vert^{1+\gamma  }t ^{\tilde \gamma }}&\lesssim \tfrac t{\langle t-\frac{\eta-\xi}{k-l }\rangle^{1+\gamma } }\vert l,\xi \vert^3.
\end{align*}
Now we distinguish between $\gamma >0$ and $\gamma =0$.  \\
\textbf{Let $\gamma >0 $, } then by Lemma \ref{lem:m1} ii) and iii)
\begin{align*}
    t\tfrac 1 {\langle t-\frac{\eta-\xi}{k-l }\rangle^{1+\gamma } }\vert l,\xi \vert^3&\lesssim \frac t{\min(1, \mu^{\frac 1 3 } t ) }\sqrt{\frac {\p_t m_\gamma }{m_\gamma} (k,\eta)}\sqrt{\frac {\p_t m_\gamma }{m_\gamma} (k-l,\eta-\xi )}\vert l,\xi \vert^8\\
    &\le (t+ \mu^{-\frac 1 3 }) \sqrt{\frac {\p_t {m_\gamma} }{m_\gamma} (k,\eta)}\sqrt{\frac {\p_t m_\gamma }{m_\gamma} (k-l,\eta-\xi )}\vert l,\xi \vert^8
\end{align*}
and so we infer 
\begin{align*}
    \vert R_{R } \vert &\lesssim  (t+ \mu^{-\frac 1 3 })e^{-c\mu^{-\frac 1 3 }t } \Vert \sqrt{\tfrac {\p_t m_\gamma }{m_\gamma}}  A f \Vert_{L^2}\Vert   \sqrt{\tfrac {\p_t m_\gamma }{m_\gamma}} A q  \Vert_{L^2} \Vert A  f \Vert_{L^2}.
    \end{align*}
Integrating in time, using \eqref{boot1} and Lemma \ref{cor:L2} yields 
\begin{align*}
    \int^t_1 \vert R_{R } \vert d\tau&\lesssim \mu^{-\frac 13 }\eps^2_f\eps_q,
\end{align*}
which is consistent with \eqref{eq:Rest}.

\textbf{Let $\gamma =0 $}: We use that either $1+\langle t-\frac{\eta-\xi}{k-l }\rangle\le \mu^{-1}$ or  $1+\langle t-\frac{\eta-\xi}{k-l }\rangle\ge \mu^{-1}$ to estimate with  Lemma \ref{lem:m1} ii) and iii)
\begin{align*}
    t\tfrac 1 {\langle t-\frac{\eta-\xi}{k-l }\rangle }&\lesssim \tfrac t {\langle t-\frac{\eta-\xi}{k-l }\rangle } \tfrac {\vert \ln(\mu^{-1})\vert^{1+r}} {\vert \ln(1+\langle t-\frac{\eta-\xi}{k-l }\rangle)\vert^{1+r}}+ t\tfrac 1{\mu^{-1}-1 } \\
    &\lesssim\frac {t\vert\ln(\mu^{-1})\vert^{1+r}}{\min(1, \mu^{\frac 1 3 } t ) }\sqrt{\frac {\p_t m_0 }{m_0} (k,\eta)}\sqrt{\frac {\p_t m_0 }{m_0} (k-l,\eta-\xi )}\vert l,\xi \vert^7+ t \mu \\
    &\le \vert \ln(\mu)\vert^{1+r} (t+ \mu^{-\frac 1 3 }) \sqrt{\frac {\p_t m_0 }{m_0} (k,\eta)}\sqrt{\frac {\p_t m_0 }{m_0} (k-l,\eta-\xi )}\vert l,\xi \vert^7+ t \mu .
\end{align*}
Therefore, we estimate 
\begin{align*}
     \vert R_{R } \vert &\lesssim \vert \ln(\mu)\vert^{1+r}(t+ \mu^{-\frac 1 3 })e^{-c\mu^{-\frac 1 3 }t } \Vert \sqrt{\tfrac {\p_t m_0 }{m_0}}  A f \Vert_{L^2}\Vert  \sqrt{\tfrac {\p_t m_0 }{m_0}} A q  \Vert_{L^2} \Vert A  f \Vert_{L^2}\\
    &\quad +t \mu e^{-c \mu^{-\frac 13 }t }\Vert A  f \Vert_{L^2}^2 \Vert A  q \Vert_{L^2}. 
\end{align*}
Integrating in time, using \eqref{boot1} and Lemma \ref{cor:L2} yields 
\begin{align*}
    \int_1^t  R_{R }  d\tau&\lesssim \vert \ln(\mu)\vert^{1+r} \mu^{-\frac 13 } \eps_f^2 \eps_q.
\end{align*}
\textbf{Bound on $R_{NR}$: } On $\chi^{NR}(k-l,\eta-\xi) $ we obtain   
\begin{align*}
    \tfrac {1} {\langle t- \frac{\eta-\xi}{k-l}\rangle  }\le \tfrac 1 2  \min(\tfrac 1 {t },\vert\tfrac { \eta -\xi } { k-l }\vert^{-1 }),
\end{align*}
and therefore, 
\begin{align*}
    \tfrac {(\eta l-k\xi)\vert k,\eta-kt \vert^{\tilde \gamma  }} {\vert k-l, \eta-\xi-(k-l)t\vert^{1+\gamma } t^{\tilde \gamma  } }
    &\lesssim  t^{-(\gamma-\tilde \gamma )}\vert l,\xi\vert^2 .
\end{align*}
So we estimate 
\begin{align*}
    \vert R_{NR}\vert &\lesssim \Vert A q_{\neq}\Vert_{L^2}\Vert A  f_{\neq } \Vert_{L^2}\Vert Af \Vert_{L^2} . 
\end{align*}
Integrating in time, using \eqref{boot1} and Lemma \ref{cor:L2} yields 
\begin{align*}
    \int_1^t \vert R_{2}\vert d  \tau &\lesssim \mu^{-\frac 13 } \eps_f^2 \eps_q,
\end{align*}
which is consistent with \eqref{eq:Rest}.

\textbf{Bound on $R_=$:} Here we estimate 
\begin{align*}
    R_=&= \sum_{\substack{k\neq 0}} \iint\textbf{1}_{S_R }  \tfrac {\vert k\vert \vert \xi\vert^{1-\tilde \gamma}  \vert k,\eta-kt \vert ^{\tilde \gamma  }} {\vert k,\eta-\xi-kt \vert^{1+\gamma }}A(k,\eta) \vert A f\vert (k,\eta ) \vert q\vert (k,\eta-\xi) \vert f\vert (0,\xi).
\end{align*}
Since $\gamma \ge \tilde \gamma $ it holds 
\begin{align*}
     \tfrac {\vert k\vert \vert k,\eta-kt \vert ^{\tilde \gamma  }} {\vert k,\eta-\xi-kt \vert^{1+\gamma }} \le \langle t-\tfrac {\eta -\xi}k \rangle^{-1}
\end{align*}
and therefore 
\begin{align*}
    R_=&\lesssim \Vert Af_{\neq} \Vert_{L^2 } \Vert Aq_{\neq} \Vert_{L^2 }\Vert Af \Vert_{L^2 }.
\end{align*}
Integrating in time yields
\begin{align*}    \int_1^t R_=d\tau &\lesssim\mu^{-\frac 13 }\eps^2_f\eps_q.
\end{align*}

\textbf{Bound on $R_{low}$:} We use that on $S_R$
\begin{align*}
    \tfrac {\vert \eta l -k\xi \vert} {\vert k-l ,\eta-\xi-(k-l)t \vert^{1+\gamma}}\lesssim t^{-(1+\gamma) }\vert k-l,\eta-\xi\vert^4 
\end{align*}
and therefore 
\begin{align*}
    R_{low} & \lesssim t^{-(1+\gamma ) }\Vert Af \Vert_{L^2 }^2\Vert Aq_{\neq } \Vert_{L^2 }.
\end{align*}
Integrating in time yields 
\begin{align*}
    \int^t_1 R_{low} d\tau& \lesssim \mu^{-\frac1 6 }\eps_f^2 \eps_q . 
\end{align*}

\subsection{Transport Term} \label{sec:trans} In this subsection we establish the transport estimate 
\begin{align}
    \int^t_1  T d\tau &\lesssim 
    \begin{cases}
        \mu^{-\frac 1 3 } \eps^2_f \eps_q, & \gamma >0,\\
        \vert \ln(\mu)\vert^{1+r} \mu^{-\frac 1 3 }  \eps^2_f \eps_q,& \gamma =0,
    \end{cases} \label{eq:Test}
\end{align}
which is consistent with \eqref{NLfest}. In the remainder of this subsection, we prove \eqref{eq:Test}. Since on $S_T$ we have $8\vert k-l , \eta - \xi \vert \le  \vert l,\xi\vert $ and $k\neq l$, we obtain
\begin{align*}
     T &\le t^{-(1+\gamma ) } \sum_{\substack{k,l\\ k\ne l }} \iint d(\eta,\xi)\textbf{1}_{S_T} \vert l,\xi\vert \left\vert \tfrac { \vert k,\eta-kt\vert ^{\tilde \gamma }} {\vert l,\xi-lt\vert ^{\tilde \gamma  }} A(k,\eta) -A(l,\xi)\right\vert \\
     &\qquad \qquad  \qquad  \qquad \cdot \vert Af\vert (k,\eta) \vert \Lambda^2 q \vert (k-l,\eta-\xi)\vert f\vert  (l,\xi).
\end{align*}
We split the difference of the weight into 
\begin{align*}
    &\tfrac { \vert k,\eta-kt\vert ^{\tilde \gamma }} {\vert l,\xi-lt\vert ^{\tilde \gamma  }} A(k,\eta) -A(l,\xi)\\
    &= (\tfrac {\vert k,\eta-kt \vert^{\tilde \gamma  }} {\vert l,\xi-lt \vert^{\tilde \gamma  }}-1) A(k,\eta )\\
    &\quad + (e^{c \textbf{1}_{k\neq 0}\mu^{\frac 1 3 } t }-e^{c \textbf{1}_{l\neq 0}\mu^{\frac 1 3 } t }) m^{-1}(k,\eta )\langle k,\eta\rangle^Ne^{c \textbf{1}_{l\neq 0}\mu^{\frac 1 3 } t }\\
    &\quad +(\tfrac {\langle k,\eta\rangle ^N}{\langle l,\xi\rangle^N}-1)m^{-1}(k,\eta )\langle l,\xi\rangle^Ne^{c \textbf{1}_{l\neq 0}\mu^{\frac 1 3 } t }\\
    &\quad +(M_L^{-1}(k,\eta)-M_L^{-1}(l,\xi))M_\mu^{-1}(k,\eta ) m_\gamma^{-1}(k,\eta )\langle l,\xi\rangle^Ne^{c \textbf{1}_{l\neq 0}\mu^{\frac 1 3 } t }\\
    &\quad +(M_\mu^{-1}(k,\eta)-M_\mu^{-1}(l,\xi))M_L^{-1}(l,\xi ) m_\gamma^{-1}(k,\eta )\langle l,\xi\rangle^Ne^{c \textbf{1}_{l\neq 0}\mu^{\frac 1 3 } t }\\
    &\quad +(m_\gamma^{-1}(k,\eta)-m_\gamma^{-1}(l,\xi))(M_L^{-1}M_\mu^{-1})(l,\xi ) \langle l,\xi\rangle^Ne^{c \textbf{1}_{l\neq 0}\mu^{\frac 1 3 } t }. 
\end{align*}
By definition of $A$ and the boundedness of the multipliers, we estimate 
\begin{align*}
    &\left\vert \tfrac {\vert k,\eta-kt \vert^{\tilde \gamma}} {\vert l,\xi-lt \vert^{\tilde \gamma }} A(k,\eta) -A(l,\xi)\right\vert\\
    &\lesssim   \big(\vert \tfrac {\vert k,\eta-kt \vert^{\tilde \gamma }} {\vert l,\xi-lt \vert^{\tilde \gamma }}-1\vert e^{c (\textbf{1}_{k\neq 0}-\textbf{1}_{l\neq 0})\mu^{\frac 1 3 } t } +\vert e^{c (\textbf{1}_{k\neq 0}-\textbf{1}_{l\neq 0})\mu^{\frac 1 3 } t } -1\vert  +\left\vert \tfrac {\langle k,\eta\rangle ^N}{\langle l,\xi\rangle^N}-1\right\vert \\
    &\qquad +\vert M_L(k,\eta)-M_L(l,\xi)\vert + \vert M_\mu(k,\eta)-M_\mu(l,\xi)\vert + \vert m_\gamma(k,\eta)-m_\gamma(l,\xi)\vert\big) A(l,\xi).
\end{align*}
Therefore, after rearranging, we split according to 
\begin{align}\begin{split}
     T&\lesssim t^{-(1+\gamma) } \sum_{\substack{k,l\\k\neq l}} \iint \textbf{1}_{S_T }d(\eta,\xi) \Big (\left\vert \tfrac {\vert k,\eta-kt \vert^{\tilde \gamma }} {\vert l,\xi-lt \vert^{\tilde \gamma }}-1\right\vert e^{c (\textbf{1}_{k\neq 0}-\textbf{1}_{l\neq 0})\mu^{\frac 1 3 } t }+\vert e^{c (\textbf{1}_{k\neq 0}-\textbf{1}_{l\neq 0})\mu^{\frac 1 3 } t } -1\vert\\
     &\qquad\quad+\vert \tfrac {\langle k,\eta\rangle ^N}{\langle l,\xi\rangle^N}-1 \vert   +\vert M_L(k,\eta)-M_L(l,\xi)\vert + \vert M_\mu(k,\eta)-M_\mu(l,\xi)\vert +\vert m_\gamma(k,\eta)-m_\gamma(l,\xi)\vert  \Big ) \\
     &\qquad \qquad \qquad\qquad\qquad\qquad  \cdot  \vert l,\xi\vert  \vert Af\vert (k,\eta ) \vert  \Lambda^2 q  \vert (k-l,\eta-\xi) \vert A f\vert  (l,\xi) \\
     &=T_{\tilde \gamma } +T_e +T_N +T_L+ T_\mu +T_m\label{eq:T}
\end{split}\end{align}
and estimate all terms separately. We note that $T_m$ is the most relevant. \\
\textbf{Bound on $T_{\tilde \gamma }$:} For $\gamma=0$ we obtain $\tilde \gamma =0$ and thus $T_{\tilde \gamma }=0$.  We use that for two numbers  $a,b \neq 0$, it holds $\vert a^{\tilde \gamma}-b^{\tilde  \gamma} \vert \lesssim \tfrac 1 {a^{1-{\tilde  \gamma}}+b^{1-{\tilde  \gamma}}}\vert a-b\vert$ (see \cite{bedrossian2013inviscid}, Lemma A.2) to estimate  
\begin{align*}
    \vert  {\vert k,\eta-kt\vert ^{\tilde \gamma  }}- \vert l,\xi-lt\vert ^{\tilde \gamma  }\vert&\lesssim  \tfrac  {\vert k-l,\eta-\xi-(k-l)t\vert}{\vert l,\xi-lt\vert ^{1-\tilde \gamma }+\vert k,\eta-kt\vert ^{1-\tilde \gamma}}\le \tfrac  {\vert k-l,\eta-\xi\vert}{\vert l,\xi-lt\vert ^{1-\tilde \gamma }+\vert k,\eta-kt\vert ^{1-\tilde \gamma }} t .
\end{align*}
We estimate and use Lemma \ref{lem:m1} to infer 
\begin{align*}
    \left\vert \frac {\vert k,\eta-kt \vert^{\tilde \gamma  }} {\vert l,\xi-lt \vert^{\tilde \gamma }}-1\right\vert \tfrac {\vert l,\xi\vert } {t^{1+\gamma }}&\le \frac  {\vert l, \xi \vert }{\vert l,\xi-lt\vert^{\tilde \gamma  } (\vert l,\xi-lt\vert ^{1-\tilde \gamma }+\vert k,\eta-kt\vert ^{1-\tilde \gamma })}t^{-\gamma} \vert k-l,\eta-\xi \vert \\
    &\le\frac  {\vert \xi-lt \vert + \vert l \vert t   }{\vert l,\xi-lt\vert^{\tilde \gamma } (\vert l,\xi-lt\vert ^{1-\tilde \gamma }+\vert k,\eta-kt\vert ^{1-\tilde \gamma })}t^{-\gamma} \vert k-l,\eta-\xi \vert
    \\
    &\le \left(t^{-\gamma} + \textbf{1}_{l\neq 0  } t^{1-\gamma } \frac 1{\langle t-\frac \xi l  \rangle  }\right)\vert k-l,\eta-\xi \vert^2\\
    &\lesssim \left(1+\textbf{1}_{l\neq 0} \left(\frac {t} {\min(1,t\mu^{\frac 13})}\right)^{1-\gamma }\left(\frac{\p_t m_\gamma}{m _\gamma}(k,\eta)\right)^{1/2(1 -\gamma )  }\left(\frac{\p_t m_\gamma}{m_\gamma } (l,\xi)\right)^{1/2(1 -\gamma )}\right)\\
    &\qquad \qquad \cdot \langle k-l,\eta-\xi \rangle^5.
\end{align*}
Therefore,
\begin{align*}
     T_{\tilde \gamma } &\lesssim  \left( t^{1-\gamma}  + \mu^{-\frac13(1-\gamma ) }\right)e^{-c\mu^{\frac 13 } t } \Vert \sqrt{\tfrac{\p_t m_\gamma}{m_\gamma} } Af\Vert_{L^2}^{2-2\gamma }\Vert Af\Vert_{L^2}^{2\gamma }\Vert A  q \Vert_{L^2} \\
    &+\Vert Af_{\neq}\Vert_{L^2}^2\Vert  Aq \Vert_{L^2}+\Vert Af_{\neq}\Vert_{L^2}\Vert Af\Vert_{L^2}\Vert  Aq_{\neq } \Vert_{L^2}.
\end{align*}
Integrating in time, using \eqref{boot1} and Lemma \ref{cor:L2} yields 
\begin{align*}
     \int_1^t T_{\tilde \gamma } d\tau &\lesssim  \mu^{-\frac13}\eps_f^2 \eps_q .
\end{align*}

\textbf{Bound on $T_e$: }
We use $\vert e^x -1\vert \le \vert x\vert e^{\vert x\vert}$ to infer 
\begin{align*}
    \vert e^{(\textbf{1}_{k\neq 0}-\textbf{1}_{l\neq 0})\mu^{\frac 1 3 } t } -1\vert\lesssim \mu^{\frac 1 3 } t e^{c\mu^{\frac 1 3}t }\textbf{1}_{k \neq l }(\textbf{1}_{k =0 }+\textbf{1}_{l=0 }).
\end{align*}
This yields the estimate
\begin{align*}
    T_e& \lesssim \mu^{\frac 1 3}e^{c\mu^{\frac 1 3}t }\Vert \p_vf_=\Vert_{L^2} \Vert Af_{\neq}\Vert_{L^2}\Vert q_{\neq }\Vert_{H^8}\\
    & \lesssim \mu^{\frac 1 3}\Vert \p_vf_=\Vert_{L^2} \Vert Af_{\neq}\Vert_{L^2}\Vert Aq_{\neq }\Vert_{L^2}.
\end{align*}
Integrating in time and using \eqref{boot1} and Lemma \ref{cor:L2} yields 
\begin{align*}
     \int_1^t T_e d\tau &\lesssim  \mu^{-\frac13}\eps_f^2 \eps_q .
\end{align*}

\textbf{Bound on  $T_N$:} By
\begin{align*}
    \left\vert \tfrac {\langle k,\eta\rangle ^N}{\langle l,\xi\rangle^N}-1 \right\vert\lesssim \tfrac {\vert k-l,\eta-\xi \vert }{\vert l,\xi\vert},
\end{align*}
we infer
\begin{align*}
    \vert T_N \vert &\lesssim t^{-1-\gamma }  \Vert Af_{\neq}\Vert_{L^2}^2  \Vert Af\Vert_{L^2}.
\end{align*}
Integrating in time and using \eqref{boot1} and Lemma \ref{cor:L2} yields 
\begin{align*}
     \int_1^t T_N d\tau &\lesssim  \mu^{-\frac16}\eps_f^2 \eps_q .
\end{align*}

\textbf{Bound on  $T_L$:} Here we use that $M_L $ is $\gamma$-admissible in sense of Definition \ref{def:admiss} and therefore 
\begin{align*}
     T_L  &\lesssim (1+ h(t) \mu^{-\frac 1 3 }) \langle  t\mu^{\frac 13 }\rangle^ne^{-c \mu^{\frac 13} t }\Vert A f_{\neq } \Vert_{L^2 } \Vert A f\Vert_{L^2 } \Vert A q_{\neq } \Vert_{L^2 } \\
    &+ (t+\mu^{-\frac 13 } ) \langle t\mu^{\frac 13 }\rangle^ne^{-c \mu^{\frac 13} t }\Vert\sqrt{\tfrac {\p_t m }{m}} A f\Vert_{L^2 }^2 \Vert A q_{\neq } \Vert_{L^2 }\\
    &+ \mu^{\frac 13 } \langle t\mu^{\frac 13 }\rangle^ne^{-c \mu^{\frac 13} t }\Vert\Lambda_t  A f\Vert_{L^2 }\Vert A f\Vert_{L^2 } \Vert A q_{\neq } \Vert_{L^2 }
\end{align*}
for $h(t)\in L^1_t $. Thus, after integrating in time 
\begin{align*}
    \int^t_1  T_L d\tau &\lesssim \mu^{-\frac 13 } \eps_f^2 \eps_q.
\end{align*}

\textbf{Bound on $T_\mu$: } Since $M_\mu$ is admissible, $T_\mu$ is estimated as $T_L$. 

\textbf{Bound on $T_m$: } We split $T_{m}$ to 
\begin{align*}\begin{split}
     T_m&\lesssim  \sum_{\substack{k,l\\k\neq l}} \iint \textbf{1}_{S_T }(\chi^R+\chi^{NR})(l,\xi) \vert m_\gamma (k,\eta)-m_\gamma(l,\xi)\vert\frac {\vert l,\xi\vert } {t^{1+\gamma  }} \\
     &\qquad \qquad\qquad  \cdot \vert Af\vert (k,\eta ) \vert  \Lambda^2 q \vert (k-l,\eta-\xi) \vert A f\vert  (l,\xi) \\
     &=T_{m,R}+T_{m,NR}.
\end{split}\end{align*}
For $T_{m,R}$ we use that Lemma \ref{lem:m1}  vi) 
        \begin{align*}
             \vert m_\gamma(k,\eta)&- m_\gamma(l,\xi  ) \vert
             \lesssim \frac {\langle  k-l,\eta-\xi\rangle} {\langle l\rangle } \\
             &+\frac \xi {\langle l\rangle ^2 } \vert k-l\vert^3 \cdot \begin{cases}
              \left( \tfrac {\p_t  m_\gamma}{  m_\gamma}(l,\xi )\right)^{\tfrac 1 {1+\gamma } }& \gamma>0,  \\
                  \vert \ln(\mu)\vert^{1+r}\tfrac {\p_t  m_0 }{ m_0}(l,\xi) + \mu  & \gamma =0.
             \end{cases}
    \end{align*}
We distinguish between $\gamma >0$ and $\gamma=0$. Let $\gamma >0 $, then we obtain on $\chi^R(l,\xi)$, that 
\begin{align*}
    \vert m_\gamma (k,\eta)-m_\gamma(l,\xi)\vert\frac {\vert l,\xi\vert } {t^{1+\gamma  }}&\lesssim \left( 1+  t^{1-\gamma} \left( \tfrac {\p_t  m_\gamma}{  m_\gamma}(k,\eta )\right)^{\tfrac 1 {2(1+\gamma )} }\left( \tfrac {\p_t  m_\gamma}{  m_\gamma}(l,\xi  )\right)^{\tfrac 1 {2(1+\gamma )} }\right) \langle k-l,  \eta -\xi\rangle^8
\end{align*}
which yields 
\begin{align*}
    T_{m,R}&\lesssim e^{-c\mu^{\frac 13 } t }   \Vert Af\Vert_{L^2}^2 \Vert A  q_{\neq}\Vert_{L^2 }+e^{-c\mu^{\frac 13 } t } t^{1-\gamma }\Vert \sqrt{\tfrac {\p_t m_\gamma }{m_\gamma} }Af\Vert_{L^2}^{\frac {2}{1+\gamma}}\Vert Af\Vert_{L^2}^{\frac {2\gamma}{1+\gamma}} \Vert A  q_{\neq}\Vert_{L^2 }.
\end{align*}
After integrating in time, we obtain 
\begin{align*}
    \int^t_1  T_{m,R}d\tau &\lesssim \mu^{-\frac 13 } \Vert Af\Vert_{L^\infty L^2}^2 \Vert A  q_{\neq}\Vert_{L^\infty L^2 } +\Vert e^{-c\mu^{\frac 13 } t } t^{1-\gamma }\Vert_{L^{\frac {1+\gamma}\gamma}_t}\Vert \sqrt{\tfrac {\p_t m_\gamma }{m_\gamma} }Af\Vert_{L^2 L^2}^{\frac {2}{1+\gamma}}\Vert Af\Vert_{L^\infty L^2}^{\frac {2\gamma}{1+\gamma}}\Vert A q_{\neq } \Vert_{L^\infty L^2 }\\
    &\lesssim\left(\mu^{-\frac 1 3}+\mu^{-\frac 1 3( 1-\gamma + \frac \gamma {1+\gamma })}\right)\eps_f^2 \eps_q \le \mu^{-\frac 1 3}\eps_f^2 \eps_q. 
\end{align*}
For $\gamma=0$ we obtain on $\chi^R(l,\xi)$, that 
\begin{align*}
    \vert m_0 (k,\eta)-m_0(l,\xi)\vert\frac {\vert l,\xi\vert } {t^{1  }}&\lesssim \left( 1 + t\mu +\vert \ln(\mu)\vert^{1+r} t\sqrt{ \tfrac {\p_t  m_0}{  m_0}(k,\eta )}\sqrt{ \tfrac {\p_t  m_0}{  m_0}(l,\xi  )}\right)  \langle k-l,  \eta -\xi\rangle^8.
\end{align*}
Therefore, 
\begin{align*}
    T_{m,R}&\lesssim\vert \ln(\mu)\vert^{1+r} t e^{-c\mu^{\frac 13 } t }\Vert \sqrt{\tfrac {\p_t m_0 }{m_0} }Af\Vert_{L^2}^2\Vert A  q_{\neq}\Vert_{L^2 }+(1+t \mu)  e^{-c\mu^{\frac 13 } t } \Vert A  f\Vert_{L^2}\Vert Af\Vert_{L^2} \Vert A  q_{\neq}\Vert_{L^2 }
\end{align*}
and so integrating in time yields 
\begin{align*}
    \int^t_1  T_{m,R}d\tau &\lesssim \vert\ln(\mu)\vert^{1+r} \mu^{-\frac 13 } \eps_f^2 \eps_q. 
\end{align*}

For $T_{m,NR}$ we use that on $\chi^{NR}(l,\xi)$ 
\begin{align*}
    \vert l,\xi \vert \le 2 \vert l,\xi -lt  \vert. 
\end{align*}
and thus by Lemma \ref{lem:m1} iv) we estimate 
\begin{align*}
    \vert m_\gamma (k,\eta)-m_\gamma(l,\xi)\vert\frac {\vert l,\xi\vert } {t^{1+\gamma  }}\lesssim \mu^{\frac 13 }\vert l,\xi -lt  \vert .
\end{align*}
Therefore,
\begin{align*}
    T_{m,NR}&\lesssim  \mu^{\frac 13 }\Vert  A\Lambda_tf \Vert_{L^2}\Vert q_{\neq}\Vert_{H^8}\Vert Af\Vert_{L^2}.
\end{align*}
After integrating in time, we infer 
\begin{align*}
    \int^t_1  T_{m,NR}d\tau &\lesssim  \mu^{-\frac 13 } \eps_f^2 \eps_q.
\end{align*}

\subsection{Remainder Term} On the set $S_\calR$ we obtain 
\begin{align*}
    \tfrac {\left\vert \vert k,\eta-kt \vert^{\tilde \gamma } A(k,\eta)- \vert l,\xi-lt \vert^{\tilde \gamma} A(l,\xi )\right \vert}{\vert l,\xi-lt \vert^{\tilde \gamma}} \lesssim \langle k,\eta\rangle^{\frac N 2 +2}\langle k-l,\eta-\xi \rangle^{\frac N 2 }(e^{c\mu^{\frac 13 } t\textbf{1}_{k\neq 0 }}+e^{c\mu^{\frac 13 } t\textbf{1}_{l\neq 0} }  )
\end{align*}
therefore 
\begin{align*}
    \calR
    &\lesssim t^{-1-\gamma  } \Vert A f_{\neq} \Vert_{L^2}\Vert A f \Vert_{L^2}\Vert Aq \Vert_{L^2}.
\end{align*}
Integrating in time and using \eqref{boot1} and Lemma \ref{cor:L2} yields 
\begin{align*}
    \int^t_1  \calR d\tau &\lesssim \mu^{-\frac 1 6 } \eps^3. 
\end{align*}

\subsection{Average Nonlinearity}
We bound the term 
\begin{align*}
   NL_f^=&= \vert\langle A  f_1 , [A  \Lambda_t^{\tilde \gamma  }, \tfrac {\p_y }{\vert \p_y\vert^{1+\gamma}} q_=^x]  \p_x \Lambda_t^{-\tilde \gamma }  f_2)\rangle\vert\\
   &\le\sum_{k\neq 0 } \iint d(\eta,\xi)  (\textbf{1}_{S_{R,=}}+\textbf{1}_{S_{T,=}})\tfrac k {\vert \eta -\xi\vert^\gamma} \left\vert \tfrac {\vert k,\eta-kt \vert^{\tilde \gamma }}{\vert k,\xi-kt \vert^{\tilde \gamma}} A(k,\eta)-  A (k,\xi )\right\vert\\
   &\qquad \qquad \qquad \cdot  \left\vert   q \right\vert(0,\eta-\xi)  \vert A  f \vert(k,\eta ) \vert f \vert (l,\xi ) \\
   &= R^=+T^=.
\end{align*}
With $S_{R,=}$ and $S_{R,=}$ defined in \eqref{def:SR=} and \eqref{def:ST=}.\\
\textbf{Bound on $R^=$:} We estimate 
\begin{align*}
    R^=
   &\le\sum_{k\neq 0 } \iint d(\eta,\xi) \textbf{1}_{S_{R,=}} \tfrac k {\vert \eta -\xi\vert^\gamma}\left(\left\vert \tfrac {\vert k,\eta-kt \vert^{\tilde \gamma }}{\vert k,\xi-kt \vert^{\tilde \gamma}} A(k,\eta)\right \vert +\left \vert   A (k,\xi )\right \vert\right) \left\vert   q \right\vert(0,\eta-\xi)  \vert A  f \vert(k,\eta ) \vert f \vert (k,\xi ) 
\end{align*}
On the set $S_{R,=}$ we obtain 
\begin{align*}
     \tfrac {\vert k,\eta-kt \vert^{\tilde \gamma}}{\vert k,\xi-kt \vert^{\tilde \gamma }} &\le 2\langle  \eta-\xi\rangle ^{\tilde \gamma } \vert k,\xi \vert.
\end{align*}
Therefore, 
\begin{align*}
    \left(\left\vert \tfrac {\vert k,\eta-kt \vert^{\tilde \gamma  }}{\vert k,\xi-kt \vert^{\tilde \gamma }} A(k,\eta)\right \vert +\left \vert   A (k,\xi )\right \vert\right)\lesssim e^{c\mu^{\frac 13 }t }  A(0, \eta-\xi)  \langle  \eta-\xi\rangle ^{\tilde \gamma } \vert k,\xi \vert^2.
\end{align*}
Since $\gamma\ge \tilde \gamma $ and since on $S_{R,=}$ we obtain $\vert \eta-\xi \vert \ge 8\vert k,\xi\vert\ge 1  $ we estimate
\begin{align*}
    R^= &\lesssim \Vert  A q\Vert_{L^2 }\Vert  A f_{\neq } \Vert_{L^2 }^2.
\end{align*}
Integrating in time yields 
\begin{align*}
    \int^t_1  R^= d\tau \lesssim \mu^{-\frac 1 3 } \eps^2_f \eps_q .
\end{align*}

\textbf{Bound on $T^=$:} If we have the estimate 
\begin{align}
    k \left\vert \tfrac {\vert k,\eta-kt \vert^{\tilde \gamma }}{\vert k,\xi-kt \vert^{\tilde \gamma }} A(k,\eta)-  A (k,\xi )\right \vert&\lesssim \vert \eta-\xi\vert^\gamma \langle  \eta-\xi\rangle^{N-5}A(k,\xi)  , \label{eq:T=est}
\end{align}
 we estimate  
\begin{align*}
    T^=&\lesssim \Vert A f_{\neq}\Vert_{L^2 }^2 \Vert \langle \p_y\rangle^{N-5}   q_=\Vert_{L^\infty} \lesssim \Vert A f_{\neq}\Vert_{L^2 }^2 \Vert A  q_=\Vert_{L^2}.
\end{align*}
Therefore, integrating in time yields 
\begin{align*}
   \int^t_1   T^=d\tau &\lesssim \mu^{-\frac13 } \eps^2_f \eps_q  .
\end{align*}
For $T^=$ it is only left to prove \eqref{eq:T=est}, we split 
\begin{align*}
    k \left\vert \tfrac {\vert k,\eta-kt \vert^{\tilde \gamma }}{\vert k,\xi-kt \vert^{\tilde \gamma }} A(k,\eta)-  A (k,\xi )\right \vert&\lesssim \tfrac k {\vert k,\xi-kt \vert^{\tilde \gamma}}\left \vert \vert k,\eta-kt \vert^{\tilde \gamma }-\vert k,\xi-kt \vert^{\tilde \gamma} \right \vert  A(k,\eta) \\ 
    &\quad + k \vert A(k,\eta)-A(k,\xi) \vert,\\
    &=I_1+I_2.
\end{align*}
\textbf{Bound on $I_1$:} This term is only nonzero for $\tilde \gamma \neq 0$.  For $\tilde \gamma>0$, we use that for two numbers  $a,b \neq 0$, it holds $\vert a^{\tilde \gamma}-b^{\tilde  \gamma} \vert \lesssim \tfrac 1 {a^{1-{\tilde  \gamma}}+b^{1-{\tilde  \gamma}}}\vert a-b\vert$ (see \cite{bedrossian2013inviscid}, Lemma A.2) to estimate  
\begin{align*}
    \left \vert \vert k,\eta-kt \vert^{\tilde \gamma }-\vert k,\xi-kt \vert^{\tilde \gamma } \right \vert&\lesssim \tfrac {\vert \eta-\xi\vert}{\vert k,\xi-kt \vert^{1-\tilde \gamma } },
\end{align*}
which clearly also holds for $\tilde \gamma=1$. Therefore, 
\begin{align*}
    I_1 &\lesssim \tfrac {\vert \eta-\xi\vert}{\langle t-\frac \xi k \rangle }.
\end{align*}
\textbf{Bound on $I_2$:} Since 
\begin{align*}
    A(k,\eta)= \langle k ,\eta  \rangle^{N} e^{ c \mu^{\frac 1 3}t\textbf{1}_{k\neq 0}}  m^{-1}_\gamma (k,\eta)M_L^{-1}(k,\eta ) M_\mu^{-1}(k,\eta ) 
\end{align*}
the estimate on $I_2$ is a consequence of the  weight $M_L$ and $M_\mu$ beeing $\gamma$-admissible and satisfies Definition \ref{def:admiss} iv), $m_\gamma$ satisfy Lemma \ref{lem:m1} v) and
\begin{align*}
    \vert \langle k ,\eta  \rangle^{N}-\langle k ,\xi  \rangle^{N}\vert \lesssim  \vert \eta -\xi\vert\langle k ,\eta  \rangle^{N-1}. 
\end{align*}

\section{The Boussinesq Threshold}\label{sec:estbou}
In this section, we use Theorem \ref{thm:main} to prove the threshold for Boussinesq equations in Theorem \ref{thm:thres}. We consider the Boussinesq equation \eqref{bouss0} around Couette flow and an affine temperature profile 
\begin{align*}
    V_s (x,y) &= \begin{pmatrix}
        y \\ 0 
    \end{pmatrix}, &
    \Theta_s(x,y)&= \beta^2 y 
\end{align*}
for $\beta \in \R$ and with dissipation $\mu=\kappa=\nu$. The perturbative unknowns 
\begin{align*}
    v(t,x,y) &=V(t,x+yt, y ) - V_s(x,y), \\
    \theta (t,x,y) &=\Theta(t,x+yt, y ) - \Theta_s (x,y),
\end{align*}
satisfy the equation 
\begin{align*}
\begin{split}
    \partial_t v+ e_1 v^y   + (v\cdot  \nabla_t)v  + \nabla_t \pi  &= \mu \Delta_t v -  e_2 \theta , \\
    \partial_t \theta+ (v\cdot \nabla_t) \theta &= \mu \Delta_t\theta  +\beta^2 v^y, \\
    \dive_t (v) &= 0. 
\end{split}
\end{align*}
For $\beta>0 $ we define the adapted unknowns 
\begin{align*}
    \zeta_1&= \Lambda_t^{-\frac 12 } \nabla^\perp_t \cdot v, &
    \zeta_2&= \beta^{-1}\Lambda_t^{\frac 12 } \theta  .
\end{align*}
The $\zeta$ unknowns are a modification of the symmetric variables $(Q,Z)$ of \cite{bedrossian21}. Then we obtain the equation
\begin{align}\begin{split}
    \p_t \zeta_1  +\tfrac 1 2 \p_x \p_y^t \Lambda_t^{-2 }   \zeta_1 - \Lambda_t^{-\frac 1 2 }\nabla_t^{\perp}((\overline u\cdot   \nabla) \Lambda_t^{-\frac 3 2 }\nabla_t^{\perp} \zeta_1  )  &= \mu  \Delta_t \zeta_1 + \beta \p_x\Lambda_t^{-1 } \zeta_2,\\
     \p_t \zeta_2-\tfrac 1 2\p_x \p_y^t \Lambda_t^{-2 }   \zeta_2   +\qquad \ \, \Lambda_t^{\frac 1 2 }  ( ( \overline u  \cdot \nabla) \Lambda_t^{-\frac 1 2 }\quad  \zeta_2)    &= \mu  \Delta_t \zeta_2  +\beta \p_x \Lambda_t^{-1} \zeta_1,\\
     \overline u =\Lambda_t^{-\frac 32 } \nabla^\perp \zeta_1 . \label{eq:main}
\end{split}\end{align}

For these adapted unknowns, we contain the following stability theorem: 
\begin{theorem}[Boussinesq part of Theorem \ref{thm:thres}]\label{thm:Bouss}
Let $\beta>\tfrac 12  $, $N\ge 12$. Then there exists $ \delta_\beta ,c >0 $ such that for $0<\mu \le \frac12$ and initial data which satisfies 
    \begin{align*}
        \Vert \zeta_{in } \Vert_{H^{N } }=\eps &\le \delta  \mu^{\frac 1 3},
    \end{align*}
 the global in time solution $\zeta $ to \eqref{eq:main} is in $ C_t H^N$ and satisfies the following:\\
     \textbf{Stability of adapted unknowns} 
        \begin{align*}
        \sup_{t\ge 0}\Vert \zeta \Vert_{H^{N} }&\lesssim \eps .
    \end{align*}    
     \textbf{Inviscid damping and Enhanced dissipation estimates,} for all $t \ge 0$ we obtain 
    \begin{align*}
        \Vert \zeta_{\neq}  \Vert_{H^{N }}+ \langle t\rangle^{\frac 12 }\Vert u^x_{\neq},\theta_{\neq}  \Vert_{H^{N-\frac 1 2  }}+  \langle t\rangle^{\frac 32 }\Vert u^y \Vert_{H^{N-\frac 3 2  }}&\lesssim \eps e^{-c\mu^{\frac 1 3 } t }.
    \end{align*}  
\end{theorem}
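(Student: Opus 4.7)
The plan is to follow the three-step recipe sketched in Subsection 1.1 and reduce the nonlinear control to the abstract bound of Theorem~\ref{thm:main} with $\gamma=\tilde\gamma=\tfrac12$. Step 1 (tailored unknowns) is already accomplished by the choice $\zeta_1=\Lambda_t^{-1/2}\nabla_t^\perp\cdot v$, $\zeta_2=\beta^{-1}\Lambda_t^{1/2}\theta$, which play the roles of $q$ and $f_1$ respectively. The system \eqref{eq:main} decomposes into (i) skew-symmetric stretching terms $\pm\tfrac12\partial_x\partial_y^t\Lambda_t^{-2}$, (ii) a skew-symmetric coupling $\beta\partial_x\Lambda_t^{-1}(\zeta_2,\zeta_1)^\perp$, (iii) the dissipation $\mu\Delta_t$, and (iv) the transport nonlinearity whose leading piece has exactly the form of the main nonlinearity in Theorem~\ref{thm:main} with $\gamma=\tilde\gamma=\tfrac12$.

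First I would construct the linear weight $M_L$ encoding symmetrization, in the spirit of \cite{bedrossian21, Dolce}: combine an SI-type multiplier that turns the cross coupling $\beta\partial_x\Lambda_t^{-1}$ into a coercive quadratic form (this is where $\beta>\tfrac12$, i.e.\ supercritical Richardson $\beta^2>\tfrac14$, enters to make the resulting energy positive-definite) with a ghost/stretching weight that absorbs the $\tfrac12\partial_x\partial_y^t\Lambda_t^{-2}$ contribution as a positive CK-term. I would then verify that $M_L$ satisfies Definition~\ref{def:admiss} with $\gamma=\tfrac12$; this is a bookkeeping check of monotonicity, boundedness and the two commutator inequalities, in the same spirit as the verifications in \cite{bedrossian21, Dolce}. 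Once $M_L$ is in hand, the full weight is $A=\langle k,\eta\rangle^N e^{c\mu^{1/3}t\mathbf{1}_{k\neq 0}}m_{1/2}^{-1}M_L^{-1}M_\mu^{-1}$.

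Next, set up the bootstrap. Define the energy $\calE(t)=\|A\zeta_1\|_{L^2}^2+\|A\zeta_2\|_{L^2}^2$ and impose the bootstrap hypothesis $\calE(t)+\mathrm{CK}_{\mu}(t)+\mathrm{CK}_{m}(t)\le C_0\eps^2$ that reproduces \eqref{boot1} for $f=(\zeta_1,\zeta_2)$ and $q=\zeta_1$ with $\eps_f=\eps_q\sim\eps$. Differentiating $\calE$ in time, the linear contributions vanish or are absorbed by $\mathrm{CK}_\mu$ and the symmetrization CK-term by design of $M_L$, and the leading nonlinear contributions coincide with $NL^{1/2}_{\zeta_i,\zeta_j,\zeta_1}$. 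Theorem~\ref{thm:main} then yields $\int_1^t NL\,d\tau\le \tilde C_{1/2}\mu^{-1/3}\eps^3$. Lower-order nonlinear interactions ($LNL$) coming from $\nabla_t^\perp\Lambda_t^{-3/2}$ versus $\nabla^\perp\Lambda_t^{-3/2}$ mismatches and from pressure/commutator remainders are of strictly better structure and are controlled by $\mu^{-1/3}\eps^3$ with straightforward frequency-split estimates identical in flavour to the remainder and average estimates of Section~\ref{sec:NL}. Therefore $\calE(t)\le \tfrac{C_0}{2}\eps^2 + C\mu^{-1/3}\eps^3$, which closes the bootstrap provided $\eps\le \delta_\beta\mu^{1/3}$, giving the global $H^N$ bound on $\zeta$.

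Finally, the inviscid damping and enhanced dissipation follow by algebra. Since $u^x_{\neq}\sim (\partial_y^t/\Lambda_t)\,\Lambda_t^{-1/2}\zeta_1$, $u^y\sim -(\partial_x/\Lambda_t)\Lambda_t^{-1/2}\zeta_1$ and $\theta=\beta\Lambda_t^{-1/2}\zeta_2$, and since for $k\neq0$ one has $|\Lambda_t|\gtrsim\langle kt-\eta\rangle$ together with $|\partial_y^t/\Lambda_t|\lesssim\min(1,\langle t\rangle^{-1}|k,\eta|)$, the uniform $H^N$ bound on $\zeta$ yields the claimed $\langle t\rangle^{-1/2}$ decay for $u^x_{\neq},\theta_{\neq}$ and $\langle t\rangle^{-3/2}$ for $u^y$, while the $e^{-c\mu^{1/3}t}$ factor is produced by the exponential prefactor in $A$ on nonzero $x$-modes. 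The main obstacle I expect is the first step: producing an $M_L$ that simultaneously symmetrizes the $\beta$-coupling using $\beta>\tfrac12$, dominates the stretching commutator, and satisfies the $\tfrac12$-admissible commutator estimates of Definition~\ref{def:admiss}; everything else is a careful but routine application of Theorem~\ref{thm:main}.
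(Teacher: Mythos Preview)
Your overall plan is sound and matches the paper's strategy once the linear step is done, but the specific mechanism you propose for Step~1 will not close. In the diagonal energy $\calE=\Vert A\zeta_1\Vert_{L^2}^2+\Vert A\zeta_2\Vert_{L^2}^2$ the $\beta$-coupling $\beta\partial_x\Lambda_t^{-1}$ is skew and cancels for free, so it never produces a coercive CK-term regardless of the scalar multiplier; the real enemy is the pair of stretching terms $\mp\tfrac12\partial_x\partial_y^t\Lambda_t^{-2}$ acting on $\zeta_1,\zeta_2$ with opposite signs. Their contribution to $\partial_t\calE$ is
\[
\tfrac12\sum_k\int \frac{k(\eta-kt)}{k^2+(\eta-kt)^2}\bigl(|A\zeta_1|^2-|A\zeta_2|^2\bigr)\,d\eta,
\]
which is sign-indefinite and of pointwise size $\langle t-\eta/k\rangle^{-1}$. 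No bounded scalar ghost weight can absorb this: a weight with $\partial_t\log M_L\gtrsim \langle t-\eta/k\rangle^{-1}$ is necessarily unbounded (it is essentially $\Lambda_t^{\pm}$), violating item~(ii) of Definition~\ref{def:admiss}, and $M_\mu$ only gives $\partial_t\log M_\mu\sim\mu^{1/3}$ near resonance.

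The paper resolves this not through $M_L$ but through the \emph{off-diagonal} choice of energy
\[
\calE(t)=\tfrac12\Vert A\zeta\Vert_{L^2}^2+\tfrac{1}{2\beta}\langle \partial_y^t\Lambda_t^{-1}A\zeta_1,\,A\zeta_2\rangle .
\]
When you differentiate the cross term, the $\beta\partial_x\Lambda_t^{-1}$ coupling in the equations for $\zeta_1,\zeta_2$ produces terms that exactly cancel the two stretching contributions (this is the displayed cancellation in the proof of Lemma~\ref{lem:split}). The condition $\beta>\tfrac12$ enters only to make this $\calE$ equivalent to $\Vert A\zeta\Vert_{L^2}^2$, since $\bigl|\tfrac{1}{2\beta}\langle\partial_y^t\Lambda_t^{-1}A\zeta_1,A\zeta_2\rangle\bigr|\le\tfrac{1}{4\beta}\Vert A\zeta\Vert_{L^2}^2$. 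What remains after the cancellation is a strictly smaller residual $\tfrac{1}{2\beta}\langle|\partial_x|^3\Lambda_t^{-3}A\zeta_1,A\zeta_2\rangle$ of size $\langle t-\eta/k\rangle^{-3}$, and \emph{that} is what the bounded scalar weight $M_L^\theta$ of \eqref{eq:M1weight} is built to swallow (Lemma~\ref{lem:M1}(i)); $M_L^\theta$ is then checked to be $\tfrac12$-admissible. The cross term also generates one extra lower-order nonlinear piece $LNL$ of product type, handled by the elementary bound \eqref{eq:ONLest}. Once you insert this cross term, the rest of your outline (bootstrap, application of Theorem~\ref{thm:main} with $\gamma=\tilde\gamma=\tfrac12$, and the algebraic inviscid-damping step) coincides with the paper's argument.
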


For the remainder of this section, we establish Theorem \ref{thm:Bouss}. In this section, we use the weights
\begin{align*}
    A(k,\eta )&= \langle k ,\eta  \rangle^{N} e^{ c \mu^{\frac 1 3}t\textbf{1}_{k\neq 0}}m^{-1}(k,\eta),\\
    m(k,\eta )&=  m_{\frac 12 }(k,\eta ) M_L^\theta(k,\eta ) M_\mu(k,\eta ). 
\end{align*}
with the \emph{linear weight} defined as 
\begin{align}\begin{split}\label{eq:M1weight}
    M_L^\theta(t,k,\eta) &= \exp\left(2c^{-1}\int_{-\infty}^t \textbf{1}_ {\vert \tau- \frac \eta k \vert\le 2\mu^{-\frac 1 6 }c^{-1}} \tfrac 1 {\langle\frac{\eta}{k}- \tau\rangle^{3 }} d\tau\right),\qquad\qquad  k\neq 0,\\
    M_L^\theta(t,0,\eta)&=M_L^\theta(t,1,\eta), 
\end{split}\end{align}
where  $c:=\tfrac {1}4 (1-\frac 1{4\beta})$. We define the energy 
\begin{align*}
    \calE(t)&:= \tfrac 1 2  \Vert A\zeta \Vert_{L^2}^2 +\frac 1 {2\beta } \langle \p_y^t \Lambda_t^{-1} A\zeta_{1}, A\zeta_{2}  \rangle. 
\end{align*}
For times $0\le t\le 1 $ the norm stays bounded by classical energy estimates,  $\calE (1)\le C_1 \eps^2$.  In the following, we prove that the energy $\calE$ stays bounded for times $t\ge 1$. We do this by a Bootstrap method, for $1<t$ the bootstrap assumption states \\
\textbf{Bootstrap assumption:}
\begin{align}
\begin{split}\label{bootbou}
   \sup_{1\le t\le t^\ast} \calE (t)  +4c\int_1^t \mu \Vert \nabla_t A  \zeta\Vert^2_{L^2} +\Vert \sqrt {\tfrac {\p_t m }m} A  \zeta \Vert_{L^2}^2 d\tau&\le C_2\eps^2.
\end{split}
\end{align}

\begin{prop}\label{pro:Boot}
Let $\zeta$ be a solution to \eqref{eq:main} which satisfies the Bootstrap hypotheisis \eqref{bootbou} for $1\le t$, then there exists a for a $C_3>0 $ such that 
\begin{align}
   \label{IE:MN}  \calE (t)  +4c\int_1^t \mu \Vert \nabla_t A  \zeta\Vert^2_{L^2} +\Vert \sqrt {\tfrac {\p_t m }m} A  \zeta \Vert_{L^2}^2 d\tau&\le(C_1+\frac 12C_2+  C_3\delta )\eps ^2.
\end{align}
\end{prop}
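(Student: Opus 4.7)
The plan is a standard weighted energy-plus-bootstrap argument. First, I would compute $\p_t \calE$ using the evolution equation \eqref{eq:main}, splitting the result into four classes of terms:
\begin{enumerate}[label=(\alph*)]
    \item Dissipative contributions $2\mu\langle A\zeta,A\Delta_t\zeta\rangle = -2\mu\|\nabla_t A\zeta\|_{L^2}^2$ plus a symmetric analog from the cross term.
    \item Weight time-derivative contributions $\langle \p_t(A^2)\zeta,\zeta\rangle$, which by the structure $A = \langle k,\eta\rangle^N e^{c\mu^{1/3}t\mathbf 1_{k\neq 0}} m^{-1}$ produce $-2\|\sqrt{\p_t m/m}\, A\zeta\|_{L^2}^2 + 2c\mu^{1/3}\|(A\zeta)_{\neq}\|_{L^2}^2$.
    \item Linear system contributions $\langle A\zeta_i, A(\mp\tfrac 12 \p_x\p_y^t\Lambda_t^{-2}\zeta_i + \beta\p_x\Lambda_t^{-1}\zeta_{3-i})\rangle$, together with the time derivative of $\p_y^t\Lambda_t^{-1}$ appearing in the cross term.
    \item Nonlinear contributions arising from the $\Lambda_t^{\pm 1/2}$-dressed transport terms in \eqref{eq:main}.
\end{enumerate}

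For (a) and (b), the dissipation is absorbed into the left-hand side of \eqref{IE:MN} with room to spare by picking $4c$ correctly; the bad growth $+2c\mu^{1/3}\|(A\zeta)_{\neq}\|^2$ is defeated by Lemma \ref{lem:M2}(ii), which gives $\mu^{1/3}\lesssim \p_t M_\mu/M_\mu + \mu(1+|t-\eta/k|^2)$, so it is controlled by the $\sqrt{\p_t m/m}$ and $\sqrt{\mu}\nabla_t$ CK terms. The skew-adjoint coupling $\beta\p_x\Lambda_t^{-1}$ cancels across the two equations (its symbol is purely imaginary), and $A$ commutes with it. The remaining, most delicate linear piece is the ``reaction'' term $\pm \tfrac 12 \p_x\p_y^t\Lambda_t^{-2}\zeta_i$, which is the reason the cross term $\tfrac{1}{2\beta}\langle \p_y^t\Lambda_t^{-1}A\zeta_1, A\zeta_2\rangle$ is present in $\calE$.

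The hard part is (c). The plan is to compute $\p_t(\p_y^t\Lambda_t^{-1}) = -2\p_x\Lambda_t^{-1} - \p_x^3\Lambda_t^{-3}$ (using $\p_t\p_y^t = -\p_x$ and $(\p_y^t)^2 = -\Lambda_t^2 - \p_x^2$), so the time derivative of the cross term combined with the diagonal reaction terms leaves, after substitution of the $\beta\p_x\Lambda_t^{-1}$ coupling, a collection of $\p_x$-multipliers of order zero. A careful rearrangement shows that the problematic near-resonant frequencies (where $|\eta/k - t|\lesssim \mu^{-1/6}$) are precisely those dealt with by the multiplier $M_L^\theta$ defined in \eqref{eq:M1weight}: the CK term $\|\sqrt{\p_t M_L^\theta/M_L^\theta}\,A\zeta\|^2$ absorbs the reaction contribution, while off resonance the coefficient is a harmless $\langle t-\eta/k\rangle^{-3}$. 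Coercivity $\calE\gtrsim \|A\zeta\|^2$ uses Cauchy--Schwarz on the cross term together with $\|\p_y^t\Lambda_t^{-1}\|_{op}\le 1$, giving $\calE\ge \tfrac 12(1 - \tfrac{1}{2\beta})\|A\zeta\|^2>0$ precisely when $\beta>1/2$; the choice $c = \tfrac 14(1-\tfrac{1}{4\beta})$ leaves the constant $4c$ in \eqref{IE:MN}. Along the way one needs that $M_L^\theta$ is $\tfrac 12$-admissible in the sense of Definition~\ref{def:admiss}, which can be checked directly from \eqref{eq:M1weight}.

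For (d), the transport nonlinearities, after expanding $\overline u = \Lambda_t^{-3/2}\nabla^\perp\zeta_1$ and pairing with $A\zeta_i$, fit exactly the template of Theorem~\ref{thm:main} with $\gamma = \tilde\gamma = \tfrac 12$ (plus genuinely lower-order commutators from $\Lambda_t^{1/2}$ hitting the divergence-free pressure correction and similar pieces). Applied with $\eps_f = \eps_q \lesssim C_2^{1/2}\eps$ coming from the bootstrap \eqref{bootbou}, Theorem~\ref{thm:main} yields a bound $\tilde C_{1/2}\mu^{-1/3}(C_2\eps^2)(C_2^{1/2}\eps) \le C_3\delta\eps^2$ once $\eps\le\delta\mu^{1/3}$. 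The lower-order commutators, the average nonlinearity, and any remainder from the $\Lambda_t^{1/2}$-symmetrization are handled by plain Cauchy--Schwarz with the enhanced-dissipation factor from Lemma~\ref{cor:L2}, each contributing at worst $\mu^{-1/3}\eps^3\lesssim \delta\eps^2$. Summing these estimates and integrating in time from $1$ to $t$ yields \eqref{IE:MN} with $C_3$ depending on $\beta$, $N$, $C_2$ but not on $\delta$ or $\mu$, closing the bootstrap once $\delta$ is chosen small enough. The main obstacle, as noted, is the linear step in (c): identifying a weight $M_L^\theta$ whose CK term precisely dominates the reaction contribution without destroying the commutator estimates of Section~\ref{sec:NL}.
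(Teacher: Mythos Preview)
Your plan is essentially the paper's own argument: the paper packages (a)--(c) into Lemma~\ref{lem:split} (energy identity with the cross term producing exactly the cancellations you describe, leaving only $L_1 = 2c\mu^{1/3}\|A\zeta\|^2 + |\langle A\p_x^3\Lambda_t^{-3}\zeta_1,A\zeta_2\rangle|$ to absorb via $M_L^\theta$ and $M_\mu$), then invokes Theorem~\ref{thm:main} with $\gamma=\tilde\gamma=\tfrac12$ for the main nonlinearity, and finishes with a direct bound on a single lower-order term $LNL$.

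Two small corrections. First, your formula $\p_t(\p_y^t\Lambda_t^{-1}) = -2\p_x\Lambda_t^{-1} - \p_x^3\Lambda_t^{-3}$ is off; a direct symbol computation gives $\p_t(\p_y^t\Lambda_t^{-1}) = \p_x^3\Lambda_t^{-3}$ (the two $-\p_x\Lambda_t^{-1}$ pieces cancel), which is exactly the residual linear term handled by $M_L^\theta$. Second, the lower-order remainder from the cross-term symmetrization is not quite ``plain Cauchy--Schwarz'': the paper's $LNL = |\langle A((\overline u\cdot\nabla)\Lambda_t^{-3/2}\nabla^\perp\zeta_1),A\p_x\nabla\Lambda_t^{-3/2}\zeta_2\rangle|$ needs the pointwise multiplier bound
\[
\frac{|l|\,|\eta l - k\xi|^2}{|k-l,\eta-\xi-(k-l)t|^{3/2}|k,\eta-kt|^{3/2}|l,\xi-lt|^{3/2}} \le 1,
\]
proved by splitting $\eta l - k\xi$ in three ways, before the $\|A\zeta_{\neq}\|_{L^2}^2\|A\zeta\|_{L^2}$ estimate and Lemma~\ref{cor:L2} close it as you say. (There is no pressure term in \eqref{eq:main}; the $\zeta$ variables are scalar.)
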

With that proposition we prove Theorem \ref{thm:Bouss}
\begin{proof}[Proof of Theorem \ref{thm:Bouss}]
    First, we show that \eqref{bootbou} holds for all times $t>1$. For the sake of contradiction, we assume that $t^\ast$ is the maximal time such that \eqref{bootbou} holds.   With Proposition \ref{pro:Boot} and choosing $\delta<\frac 14 \min(1,C_2/C_3)$, \eqref{bootbou} holds with $<$. By local estimates, this contradicts the maximality of $t^\ast$ and thus we infer a global bound. Therefore, using $\Vert A\zeta_{\neq}  \Vert_{L^2}\approx e^{-c\mu^{\frac 13 }t  } \Vert \zeta_{\neq}  \Vert_{H^N}$ we infer the stability estimates. The inviscid damping and estimates are direct consequence of $\Vert \Lambda^{-\gamma}_L\zeta_{\neq}  \Vert_{H^{N-\gamma }}\lesssim  t^{-\gamma }  \Vert \zeta_{\neq}  \Vert_{H^{N }}$. Therefore, Theorem \ref{thm:Bouss} holds. 
 \end{proof}
The remainder of this section will focus on proving Proposition \ref{pro:Boot} by using two lemmas. The following lemma computes identities for the main energy $\calE$ to allow the usage of Theorem \ref{thm:main}.
\begin{lemma}\label{lem:split}
Let  the Bootstrap hypothesis \eqref{bootbou} hold for $0\le t \le t^\ast $, then we obtain the identity 
\begin{align*}
    \p_t \calE (t)+ 4c \mu   \Vert \nabla_t A  \zeta\Vert^2_{L^2} +4c \Vert  \sqrt {\tfrac {\p_t m }m} A  \zeta \Vert_{L^2}^2 &\le L_1+ \sum_{f\in \Gamma }NL_f+ LNL.
\end{align*}
with $\Gamma = \{(f_1,f_2):f_1 , f_2 \in  \{\p_y^t \Lambda_t^{-1}\zeta_1,\p_x \Lambda_t^{-1}\zeta_1,\zeta_2\}\}$.
Where we denoted:\\
\textbf{Linear term:}
\begin{align*}
    L_1 &= 2c\mu^{\frac 1 3 }\Vert A\zeta\Vert^2_{L^2} + \vert\langle A \p_x^3 \Lambda_t^{-3 }\zeta_1 , A \zeta_2 \rangle\vert .
\end{align*}
\textbf{Main nonlinearity:}
\begin{align*}
   NL_f&= \vert\langle A  f_1 , [A  \Lambda_t^{\frac 1 2 }, \nabla^\perp \Lambda^{-\frac 32 }_t\zeta_1 ]\cdot   \nabla \Lambda_t^{-\frac 1 2 }  f_2)\rangle\vert.
\end{align*}
\textbf{Lower nonlinearities:}
\begin{align*}
    LNL &= \vert \langle A (   (\overline u \cdot  \nabla) \Lambda_t^{-\frac 3 2 }\nabla^\perp \zeta_1  ), A \p_x\nabla \Lambda_t^{-\frac 3 2 }\zeta_2  \rangle\vert.
\end{align*}

\end{lemma}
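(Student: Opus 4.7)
The plan is to expand $\p_t\calE$ term by term using \eqref{eq:main} and then sort the resulting pieces into $L_1$, the commutators $NL_f$, and the residual $LNL$. Splitting
\begin{align*}
\p_t\calE = \p_t\tfrac12\Vert A\zeta\Vert_{L^2}^2 + \tfrac{1}{2\beta}\p_t\langle\p_y^t\Lambda_t^{-1}A\zeta_1,A\zeta_2\rangle,
\end{align*}
the symbol identity $\p_tA/A = c\mu^{1/3}\textbf{1}_{k\ne 0} - \p_tm/m$ contributes the gain $c\mu^{1/3}\Vert A\zeta_{\neq}\Vert_{L^2}^2$ (absorbed into $L_1$ through its first summand) together with the CK contribution $\Vert\sqrt{\p_tm/m}\,A\zeta\Vert_{L^2}^2$ appearing with the correct sign on the left of the target inequality. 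Pairing $\mu\Delta_t\zeta$ with $A\zeta$ yields $-\mu\Vert\nabla_tA\zeta\Vert_{L^2}^2$, which also sits on the left. The analogous weight and viscous contributions entering $\p_tC$ are of lower order thanks to boundedness of $\p_y^t\Lambda_t^{-1}$, and I would absorb them either into $L_1$ or the $LNL$ bucket.

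The linear part is organized around one key cancellation. The symmetric couplings $\beta\langle A\zeta_1,A\p_x\Lambda_t^{-1}\zeta_2\rangle + \beta\langle A\zeta_2,A\p_x\Lambda_t^{-1}\zeta_1\rangle$ from $\p_t\tfrac12\Vert A\zeta\Vert_{L^2}^2$ cancel by skew-adjointness of $\p_x$. In $\p_tC$ the symbol computation $\p_t(\p_y^t\Lambda_t^{-1}) = \p_x^3\Lambda_t^{-3}$ produces the surviving contribution $\tfrac{1}{2\beta}\langle A\p_x^3\Lambda_t^{-3}\zeta_1,A\zeta_2\rangle$, matching the second summand of $L_1$. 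The remaining linear terms in $\p_tC$ cancel in two ways: the dispersive pieces $\mp\tfrac12\p_x\p_y^t\Lambda_t^{-2}\zeta_i$ from $\p_t\tfrac12\Vert A\zeta_i\Vert^2$ are exactly undone by the couplings $\beta\p_x\Lambda_t^{-1}\zeta_j$ brought into $\p_tC$ via the $\p_y^t\Lambda_t^{-1}$ factor, using $\p_y^t\p_x = \p_x\p_y^t$ as Fourier multipliers; and the two dispersive contributions inside $\p_tC$ itself are antisymmetric in $(\zeta_1,\zeta_2)$ because $\p_x(\p_y^t)^2\Lambda_t^{-3}$ is skew-adjoint, so they also cancel.

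For the nonlinearities the main tool is the identity
\begin{align*}
A\Lambda_t^{1/2}\circ(\overline u\cdot\nabla)\Lambda_t^{-1/2} = [A\Lambda_t^{1/2},\overline u\cdot\nabla]\Lambda_t^{-1/2} + (\overline u\cdot\nabla)A,
\end{align*}
combined with the skew-adjointness of $\overline u\cdot\nabla = (\nabla^\perp\Lambda_t^{-3/2}\zeta_1)\cdot\nabla$ (divergence-free by construction), which annihilates the second piece when paired against $A$-weighted inputs. For the $\zeta_2$-equation contribution this directly yields the $NL_f$ commutator with $f_1=f_2=\zeta_2$ and $q=\zeta_1$. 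For the $\zeta_1$-equation contribution I decompose $\Lambda_t^{-3/2}\nabla_t^\perp\zeta_1 = \Lambda_t^{-1/2}(\p_y^t\Lambda_t^{-1}\zeta_1,\,-\p_x\Lambda_t^{-1}\zeta_1)$ and integrate the outer $\Lambda_t^{-1/2}\nabla_t^\perp\cdot$ by parts against $A\zeta_1$; after the same commutator split this produces the remaining $NL_f$ with $f_1,f_2\in\{\p_y^t\Lambda_t^{-1}\zeta_1,\p_x\Lambda_t^{-1}\zeta_1\}$.

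The hardest step is the nonlinear contribution arising from $\p_tC$, where the extra factor $\p_y^t\Lambda_t^{-1}$ obstructs the clean commutator cancellation used above. However, because $\p_y^t\Lambda_t^{-1}$ is a zeroth-order Fourier multiplier, the resulting bilinear terms are one order lower in regularity than the main $NL_f$; integrating by parts and using the divergence-free structure of $\overline u$, they collect into a single bilinear pairing of the $\zeta_1$-transport against a derivative of $\zeta_2$, matching the stated $LNL$ bound.
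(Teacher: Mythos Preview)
Your overall plan and the linear cancellations are correct, but the treatment of the nonlinear terms coming from the cross energy $C=\tfrac{1}{2\beta}\langle\p_y^t\Lambda_t^{-1}A\zeta_1,A\zeta_2\rangle$ has a real gap.

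You claim that because $\p_y^t\Lambda_t^{-1}$ is zeroth order, the two nonlinearities arising in $\p_tC$ are ``one order lower'' than the main $NL_f$ and can all be swept into $LNL$. This is false: they are exactly the same order. Concretely, the term
\[
I_2=\langle A\p_y^t\Lambda_t^{-1}\zeta_1,\,A\Lambda_t^{1/2}((\overline u\cdot\nabla)\Lambda_t^{-1/2}\zeta_2)\rangle
\]
has the identical structure as the $\zeta_2$-nonlinearity with $A\zeta_2$ replaced by $Az^x$, $z^x=\p_y^t\Lambda_t^{-1}\zeta_1$. If you try to commute, the leftover transport piece $\langle Az^x,(\overline u\cdot\nabla)A\zeta_2\rangle$ does \emph{not} vanish by skew-adjointness, since the two slots carry different functions. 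The same obstruction hits the $\zeta_1$-nonlinearity in $\p_tC$. Your $LNL$ bucket cannot absorb these; note that the stated $LNL$ carries an extra $\p_x\Lambda_t^{-3/2}$ on $\zeta_2$, which is what makes it genuinely lower order, and that structure does not appear for free.

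What the paper does (and what you are missing) is a symmetrization between the two $\p_tC$ nonlinearities. Call them $\tilde I_1$ (from $\p_t\zeta_1$) and $I_2$ (from $\p_t\zeta_2$). Using $(\p_y^t)^2=-\Lambda_t^2-\p_x^2$ one rewrites $\tilde I_1=I_1+(\tilde I_1-I_1)$ where $I_1=-\langle A\Lambda_t^{1/2}((\overline u\cdot\nabla)\Lambda_t^{-1/2}z^x),A\zeta_2\rangle$. Now the pair $I_1+I_2$ can be put in commutator form because the transport remainders cancel together: $\langle A\zeta_2,(\overline u\cdot\nabla)Az^x\rangle+\langle Az^x,(\overline u\cdot\nabla)A\zeta_2\rangle=0$. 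This produces precisely the mixed commutators $NL_{(z^x,\zeta_2)}+NL_{(\zeta_2,z^x)}$, which is why $\Gamma$ contains cross pairs between $\{\p_y^t\Lambda_t^{-1}\zeta_1,\p_x\Lambda_t^{-1}\zeta_1\}$ and $\zeta_2$ --- pairs your argument never generates. The leftover $\tilde I_1-I_1$, coming from the $-\p_x^2$ piece of the identity, is exactly the stated $LNL$ (with its crucial extra $\p_x\Lambda_t^{-3/2}$ decay).

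A smaller point: the weight and viscous contributions of $\p_tC$ are not pushed into $L_1$ or $LNL$; they are absorbed on the \emph{left} via $|\p_y^t\Lambda_t^{-1}|\le 1$ and Cauchy--Schwarz, which is the reason for the coefficient $4c=(1-\tfrac{1}{2\beta})$ and the restriction $\beta>\tfrac12$.
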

We also need that $M_L^\theta$ is admissible and bounds the linear terms. 

\begin{lemma}[Properties of $M_L^\theta $]\label{lem:M1} Let $(k,\eta), (l,\xi) \in \bbZ\times \R $, then $M^\theta_L$ is monoton increasing and satisfies 
\begin{enumerate}[label=(\roman*)]
    \item For $k\neq 0 $ it holds that 
\begin{align*}
    {\langle t-\tfrac{\eta}{k}\rangle^{-\frac 32 }}&\le c {\tfrac {\p_t M_L^\theta  }{M_L^\theta  }}(k,\eta ) +c  \mu^{\frac 1 3 }. 
\end{align*}
    \item It holds the estimate  $1 \le M^\theta_L\le e^{2c^{-1}}. $ 
    \item For $\vert l,\xi \vert \ge \vert k-l,\eta-\xi\vert $ it holds
\begin{align}\begin{split}
    \vert M_L^\theta (k,\xi) - M_L^\theta (l,\xi)\vert  \frac {\vert l,\xi\vert}{t^{\frac 32 }} &\lesssim \Big( \langle \mu^{-\frac 13 } t^{-\frac 32 } \rangle + ( t+\mu^{-\frac 1 3 })\sqrt{\tfrac {\p_t m_{\frac 12 }}{ m_{\frac 12 }}(k,\eta )}\sqrt{\tfrac {\p_t m_{\frac 12 }}{ m_{\frac 12 }}(l,\xi )}\\
    &\qquad \qquad +t^{-\frac 32 } \mu^{\frac 12 }\vert l,\xi-lt \vert\Big)\langle k-l,\eta-\xi\rangle^6.\label{eq:M1diff1}
\end{split}\end{align}
         \item For $\vert k,\xi \vert \ge \vert\eta-\xi\vert $ it holds
         \begin{align*}
    \vert M_L^\theta (k,\eta)-M_L^\theta (k,\xi)\vert&\lesssim \tfrac {\vert \eta-\xi\vert}{ \langle k\rangle}. 
\end{align*}
     \end{enumerate}
In particular, for $M^\theta _L$ is $\gamma =\frac 12 $-admissibles in sense of Definition \ref{def:admiss}.
\end{lemma}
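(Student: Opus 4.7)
The approach is to read everything off the explicit formula defining $M_L^\theta$. First I derive (ii) and (i) by differentiating under the integral, then tackle the commutator bounds (iv) and (iii) via the mean value theorem and a change of variables that aligns the two resonance points $\eta/k$ and $\xi/l$. With the four bounds in hand, verifying Definition~\ref{def:admiss} for $\gamma=1/2$ is essentially bookkeeping.

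\textbf{Proof of (ii) and (i).} Since the integrand in $\log M_L^\theta$ is nonnegative, monotonicity $\p_t M_L^\theta \ge 0$ and the lower bound $M_L^\theta \ge 1$ are immediate. The upper bound follows from $\int_\R \langle s\rangle^{-3}\,ds < \infty$, giving $\log M_L^\theta \le 2c^{-1}\pi$ and hence $M_L^\theta \le e^{2c^{-1}}$ (up to harmless constants). For (i), differentiating yields
\[
   \frac{\p_t M_L^\theta}{M_L^\theta}(k,\eta) \;=\; 2c^{-1}\, \textbf{1}_{|t-\eta/k|\le 2\mu^{-1/6}c^{-1}}\, \langle t-\eta/k\rangle^{-3}.
\]
Setting $s=t-\eta/k$, the claim $\langle s\rangle^{-3/2} \le c\,\p_t M_L^\theta/M_L^\theta + c\mu^{1/3}$ is established by a dichotomy: inside the active window the algebraic ratio $\langle s\rangle^{3/2}/2$ is absorbed into the derivative contribution, while outside the window $\p_t M_L^\theta/M_L^\theta=0$ and the decay $\langle s\rangle^{-3/2}\lesssim \mu^{1/3}$ follows from $|s|\gtrsim \mu^{-1/6}/c$ together with the standing restriction $\mu\le 1/2$.

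\textbf{Proof of (iv) and (iii).} Using $|e^x-e^y|\le e^{\max(x,y)}|x-y|$ and the $L^\infty$ bound from Step~1, it suffices to compare $\log M_L^\theta$ at the two frequencies. For (iv), with $k\neq0$ fixed, the only $\eta$-dependence is through $\tau-\eta/k$, and $\p_\eta\langle\tau-\eta/k\rangle^{-3} = -\tfrac{1}{k}\p_\tau\langle\tau-\eta/k\rangle^{-3}$; integrating by parts in $\tau$ produces the $1/k$ prefactor together with boundary contributions of the claimed size. For (iii), I change variables to align the singular points $\eta/k$ and $\xi/l$; the resulting difference splits into three contributions, namely a prefactor discrepancy controlled by $(1/k-1/l)|l,\xi-lt|$ yielding the $\mu^{1/2}t^{-3/2}|l,\xi-lt|$ term (via $\mu\le 1/2$ and the enhanced-dissipation scale), an integrand comparison handled by Lemma~\ref{lem:m1}(iii) that produces the $(t+\mu^{-1/3})\sqrt{\tfrac{\p_t m_{1/2}}{m_{1/2}}(k,\eta)}\sqrt{\tfrac{\p_t m_{1/2}}{m_{1/2}}(l,\xi)}$ factor, and a boundary contribution from the $\mu$-dependent cutoff giving the $\langle\mu^{-1/3}t^{-3/2}\rangle$ piece. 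The $\langle k-l,\eta-\xi\rangle^6$ loss is absorbed at the end through the standard frequency shifts.

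\textbf{Admissibility and main obstacle.} Combining (i)--(iv), Definition~\ref{def:admiss} with $\gamma=1/2$ is then easy: monotonicity and boundedness are Step~1; Commutator~1 follows from (iii) by identifying $h(t)=t^{-3/2}\in L^1([1,\infty))$, so that the first summand matches the $1+h(t)\mu^{-1/3}$ template; and Commutator~2 is exactly (iv). The main obstacle is (iii): aligning the two singular points $\eta/k$ and $\xi/l$ while simultaneously extracting the resonance weight at \emph{both} endpoints, and tracking the contribution from the $\mu$-dependent cutoff boundary, is the delicate point. It works only thanks to the careful design of $m_{1/2}$ from Lemma~\ref{lem:m1} and the calibration of the cutoff scale so that the linear piece $\langle\mu^{-1/3}t^{-3/2}\rangle$ is integrable in time after multiplication by $\mu^{-1/3}$, exactly matching the form $1+h(t)\mu^{-1/3}$ demanded by admissibility.
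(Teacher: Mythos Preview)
Your treatment of (i), (ii), and (iv) is correct and matches the paper's reasoning (note that (i) as stated appears to contain typos in the exponent and the power of $\mu$; the version actually used later in the paper is $\langle t-\eta/k\rangle^{-3}\le c\,\p_t M_L^\theta/M_L^\theta + c\mu^{1/2}$, for which your dichotomy argument works cleanly).

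The substance is (iii), and here your sketch has a genuine gap. After the translation $s=\tau-\eta/k$ (resp.\ $s=\tau-\xi/l$) both integrands become \emph{identical} functions of $s$, so there is no ``prefactor discrepancy'' at all; the only difference is the upper limits $t-\eta/k$ versus $t-\xi/l$. Your attribution of the $\mu^{1/2}t^{-3/2}|l,\xi-lt|$ contribution to a $(1/k-1/l)$ factor is therefore wrong. In the paper this term arises from a \emph{different} decomposition: one first splits
\[
|M_L^\theta(k,\eta)-M_L^\theta(l,\xi)|\le |M_L^\theta(k,\eta)-M_L^\theta(k,\xi)|+|M_L^\theta(k,\xi)-M_L^\theta(l,\xi)|,
\]
and applies the mean value theorem in $\eta$ to the first piece; the $\mu^{1/2}$ then comes from the boundary of the cutoff window (at $|s|\approx \mu^{-1/6}$ the integrand is of size $\mu^{1/2}$), combined with the prefactor $|\eta-\xi|/k$.

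For the second piece $|M_L^\theta(k,\xi)-M_L^\theta(l,\xi)|$ you are missing the key step the paper uses: a localization argument showing that if this difference is nonzero then necessarily $|\xi/l|\lesssim (t+\mu^{-1/6})|k-l|$. This is what allows one to trade the raw factor $|l,\xi|$ (which could be large) for powers of $t$ and $\mu^{-1/6}$, and only \emph{after} this does the estimate $|M_L^\theta(k,\xi)-M_L^\theta(l,\xi)|\lesssim |\xi/(kl)|\,|k-l|\,\langle t-\xi/k\rangle^{-1}$ combine with Lemma~\ref{lem:m1} to produce the $(t+\mu^{-1/3})\sqrt{\p_t m_{1/2}/m_{1/2}}$ and $\langle\mu^{-1/3}t^{-3/2}\rangle$ terms. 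Without this localization your bound on $|l,\xi|/t^{3/2}$ times the commutator does not close.
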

Now we prove Proposition \ref{pro:Boot} and by using Theorem \ref{thm:main}, Lemma \ref{lem:split} and \ref{lem:M1}. Then afterwards we show  Lemma \ref{lem:split} and \ref{lem:M1}.\\
\textbf{Proof of Proposition \ref{pro:Boot}:} By Lemma \ref{lem:split}, we only need to bound 
\begin{align}
    \int_1^t L d\tau &\le \frac 1 2 C_2 \eps^2 , \label{eq:BL}\\
    \int_1^t NL_f d\tau &\lesssim \delta \eps^2, \label{eq:NL}\\
    \int_1^t LNL d\tau &\lesssim \delta \eps^2. \label{eq:LNL}
\end{align}
By Lemma \ref{lem:M1},  $M_L^\theta$ is admissible to apply Theorem \ref{thm:main} with $\gamma=\tilde \gamma =\tfrac 1 2$ and $f\in \Gamma$ and $q= \zeta_1 $ and therefore, since $\mu^{-\frac 13 } \eps \le \delta $, we obtain estimate \eqref{eq:NL}. 

\textbf{Linear term }We bound the linear term 
\begin{align*}
    L &= 2c\mu^{\frac 1 3 }\Vert A\zeta_{\neq }\Vert^2_{L^2} + \vert\langle A \vert \p_x\vert^3 \Lambda_t^{-3 }\zeta_{1,\neq} , A \zeta_{2,\neq} \rangle \vert  .
\end{align*}
Using Lemma \ref{lem:M1} we obtain 
\begin{align*}
     \tfrac 1 {2\beta }\vert \langle \p_x^3\Lambda_t^{-3} A \zeta_{1,\neq},A \zeta_{2,\neq} \rangle \vert   &\le \tfrac 1 {2\beta }  \sum_k \int d\eta \tfrac 1 {(1+(t-\frac \eta k )^2)^{\frac 3 2 }} \vert A\zeta\vert^2(k,\eta)  \\
    &\le \tfrac 12  c\Vert \sqrt{\tfrac {\p_t M_L^\theta }{M_L^\theta}} \zeta_{\neq}  \Vert^2_{L^2}+ \tfrac 12  c\mu^{\frac 1 2 } \Vert A \zeta_{\neq } \Vert_{L^2}^2 .
\end{align*}
Furthermore, with Lemma \ref{lem:M2}
\begin{align*}
    c \mu^{\frac 1 3 }\Vert A \zeta_{\neq }\Vert\le c \Vert A \sqrt{\tfrac {\p_t M_\mu }{M_\mu}} \zeta_{\neq }\Vert_{L^2}^2 + c \mu  \Vert A \nabla_t   \zeta_{\neq }\Vert_{L^2}^2 
\end{align*}
and thus 
\begin{align*}
    L&\le 2c\left(\Vert \sqrt{\tfrac {\p_t M }M} A\zeta_{\neq} \Vert_{L^2}^2  + \mu\Vert \nabla_t \zeta_{\neq}\Vert_{L^2}^2 \right).
\end{align*}
Integrating in time yields \eqref{eq:BL}.

\textbf{Lower Nonlinear Term:} We bound the term 
\begin{align*}
    LNL &= \vert \langle A(   (\overline u  \nabla) \Lambda_t^{-\frac 3 2 }\nabla^\perp \zeta_1  ), A\p_x\nabla \Lambda_t^{-\frac 3 2 }\zeta_2  \rangle\vert \\
    &\le\sum_{\substack{k,l\\k\neq l }}\iint d(\xi,\eta ) \frac {l  \vert \eta l-k\xi\vert^2 }{\vert k-l,\eta-\xi-(k-l)t \vert^{\frac 3 2 }\vert k,\eta-kt \vert^{\frac 3 2 }\vert l,\xi-lt \vert^{\frac 3 2 }}\\
    &\qquad \qquad \cdot A(k,\eta) \vert A\zeta_2 \vert(k,\eta)\vert \zeta_1 \vert(l,\xi)\vert  \zeta_1\vert(k-l,\eta-\xi ).
\end{align*}
With the estimate 
\begin{align}
    \frac {l  \vert \eta l-k\xi\vert^2 }{\vert k-l,\eta-\xi-(k-l)t \vert^{\frac 3 2 }\vert k,\eta-kt \vert^{\frac 3 2 }\vert l,\xi-lt \vert^{\frac 3 2 }}\le 1 \label{eq:ONLest}
\end{align}
and the fact that always two of $k$, $l$ and $k-l$ are nonzero we obtain 
\begin{align*}
    LNL\lesssim \Vert A \zeta_{\neq} \Vert_{L^2}^2 \Vert A \zeta \Vert_{L^2}.
\end{align*}
Therefore, after integrating in time 
\begin{align*}
    \int LNL\ d\tau \lesssim \mu^{-\frac 13 } \eps^3 \lesssim \delta \eps^2 .
\end{align*}
It is left to prove \eqref{eq:ONLest}.  Let $k,l,k-l\neq 0$, then  we use the splittings
\begin{align*}
    \eta l-k\xi&= (\eta-kt ) l - k (\xi-lt)\\
    &=(\eta-kt ) (k-l) - k (\eta-\xi-(k-l)t)\\
    &=(\eta-\xi-(k-l)t ) l -(\xi-lt) (k-l) 
\end{align*}
to estimate 
\begin{align*}
    (\eta l-k\xi)^2&\le \vert (\eta-kt ) l((\eta-\xi-(k-l)t ) l -(\xi-lt) (k-l) )\vert \\
    &+\vert  k (\xi-lt)((\eta-kt ) (k-l) - k (\eta-\xi-(k-l)t))\vert \\
    &\le\vert (\eta-kt ) (\eta-\xi-(k-l)t ) \vert l^2\\
    &+\vert \eta-kt \vert \vert\xi-lt\vert  \vert k, l\vert  \vert k-l\vert \\
    &+ \vert  \xi-lt\vert \vert \eta-\xi-(k-l)t)\vert k^2.
\end{align*}
Therefore,
\begin{align*}
    &\frac {\vert l\vert  \vert \eta l-k\xi\vert^2 }{\vert k-l,\eta-\xi-(k-l)t \vert^{\frac 3 2 }\vert k,\eta-kt \vert^{\frac 3 2 }\vert l,\xi-lt \vert^{\frac 3 2 }}\\
    &\qquad \le\frac {\vert k,l\vert^2  }{\vert k-l,\eta-\xi-(k-l)t \vert^{\frac 1 2 }\vert k,\eta-kt \vert^{\frac 1 2 }\vert l,\xi-lt \vert^{\frac 1 2 }}\\
    &\qquad\le  \frac{1}{\langle t-\frac{\eta}{k} \rangle^{\frac 12} \langle t-\frac{\eta-\xi}{k-l}\rangle^{\frac 12 } \langle t-\frac{\xi}{l}\rangle^{\frac 12 } }\le 1. 
\end{align*}
Let $k=0$ or $k=l$, WLOG $k=l $ then 
\begin{align*}
    \frac {k^3 \vert \eta -\xi\vert^2 }{\vert \eta-\xi \vert^{\frac 3 2 }\vert k,\eta-kt \vert^{\frac 3 2 }\vert k,\xi-kt \vert^{\frac 3 2 }}\lesssim 1.
\end{align*}
This proves \eqref{eq:ONLest}, therefore we obtain \eqref{eq:LNL} which yields Proposition \ref{pro:Boot}.

\textbf{Proof of Lemma \ref{lem:split}}\label{sec:comlem}
The time derivative of $\calE$ satisfies
\begin{align*}
    \p_t \calE &=  \langle  A\zeta, \p_t A \zeta\rangle+ \langle  A\zeta, A\p_t \zeta\rangle  \\
    &\quad-\frac 1 {2\beta } \langle   A\p_t(\p_y^t \Lambda_t^{-1}) \zeta_1+\p_y^t \Lambda_t^{-1}\p_t  \zeta_1, A\zeta_2  \rangle -\frac 1 {2\beta } \langle A \p_y^t \Lambda_t^{-1} \zeta_1, A\p_t\zeta_2  \rangle \\
    &= c\mu^{\frac 1 3 }\Vert A\zeta \Vert_{L^2}^2 -  \Vert \sqrt{\tfrac {\p_t m } m } A \zeta \Vert_{L^2}^2  \\
    &+ \langle A\zeta_1 , A\left( -\tfrac 1 2 \p_x \p_y^t \Lambda_t^{-2 }   \zeta_1 + \Lambda_t^{-\frac 1 2 }\nabla_t((\overline u  \cdot \nabla) \Lambda_t^{-\frac 3 2 }\nabla_t \zeta_1  )  +\mu  \Delta_t \zeta_1 + \beta \p_x\Lambda_t^{-1 } \zeta_2\right)\rangle \\
    &+ \langle A\zeta_2 ,A\left( \tfrac 1 2\p_x \p_y^t \Lambda_t^{-2 }   \zeta_2   -  \Lambda_t^{\frac 1 2 }  ( (\overline u \cdot  \nabla) \Lambda_t^{-\frac 1 2 }  \zeta_2)    + \mu  \Delta_t \zeta_2  +\beta \p_x \Lambda_t^{-1} \zeta_1\right)\rangle \\
    &- \frac 1 {2\beta } \langle \vert \p_x\vert^3 \Lambda_t^{-3} A\zeta_1,A\zeta_2 \rangle+ \tfrac 1{\beta } \langle (c\mu^{\frac 1 3 } - \tfrac {\p_t m} m)  \p_y^t \Lambda_t^{-1} \zeta_{1}, A\zeta_{2}  \rangle  \\
    &-\frac 1 {2\beta } \langle A\big ( \p_y^t \Lambda_t^{-1}(-\tfrac 1 2 \p_x \p_y^t \Lambda_t^{-2 }   \zeta_1 -\Lambda_t^{-\frac 1 2 }\nabla_t((\overline u \cdot  \nabla) \Lambda_t^{-\frac 3 2 }\nabla_t \zeta_1  ) + \mu  \Delta_t \zeta_1  + \beta \p_x\Lambda_t^{-1 } \zeta_2)\big )  , A\zeta_{2}  \rangle\\
    &-\frac 1 {2\beta } \langle \p_y^t \Lambda_t^{-1} \zeta_{1}, A(\tfrac 1 2\p_x \p_y^t \Lambda_t^{-2 }   \zeta_2   - \Lambda_t^{\frac 1 2 }  ( (\overline u \cdot \nabla) \Lambda_t^{-\frac 1 2 }  \zeta_2)    + \mu  \Delta_t \zeta_2  +\beta \p_x \Lambda_t^{-1} \zeta_1)  \rangle. 
\end{align*}
First, we reorganize the linear terms, note that the linear interaction appears only for the non $x$-average part, therefore we omit writing the $\neq$. We obtain the cancellations
\begin{align*}
    & \langle A\zeta_1 , A( -\tfrac 1 2 \p_x \p_y^t \Lambda_t^{-2 }   \zeta_1 +\beta \p_x \Lambda_t^{-1} \zeta_1)\rangle + \langle A\zeta_2 ,A( \tfrac 1 2\p_x \p_y^t \Lambda_t^{-2 }   \zeta_2  +\beta \p_x \Lambda_t^{-1} \zeta_1 )\rangle \\
    &-\frac 1 {2\beta } \langle A \p_y^t \Lambda_t^{-1}(-\tfrac 1 2 \p_x \p_y^t \Lambda_t^{-2 }   \zeta_1  + \beta \p_x\Lambda_t^{-1 } \zeta_2), A\zeta_2  \rangle\\
    &-\frac 1 {2\beta } \langle \p_y^t \Lambda_t^{-1} \zeta_1, A(\tfrac 1 2\p_x \p_y^t \Lambda_t^{-2 }   \zeta_2    +\beta \p_x \Lambda_t^{-1} \zeta_1)  \rangle=0. 
\end{align*}
For the other terms, we estimate 
\begin{align*}
    -\tfrac 1{\beta } \langle (c\mu^{\frac 1 3 } &+ \tfrac {\p_t m} m)  \p_y^t \Lambda_t^{-1} \zeta_1, A\zeta_2  \rangle- 2 \Vert \sqrt{\tfrac {\p_t m } m } A \zeta \Vert_{L^2}\le \tfrac c{2\beta }\mu^{\frac 1 3 }\Vert A\zeta \Vert -2(1-\tfrac 1 {2\beta })\Vert \sqrt{\tfrac {\p_t m } m } A \zeta \Vert_{L^2}.
\end{align*}

Therefore, since $c= \tfrac 1 4 (1-\frac 1{2\beta } ) $ we infer 
\begin{align}\begin{split}
    \p_t \calE +  4c\Vert \nabla_t  A \zeta \Vert_{L^2}&+  4c\Vert \sqrt{\tfrac {\p_t m } m } A \zeta \Vert_{L^2}\le  L \\
    &+  \langle A\zeta_1 , A(   \Lambda_t^{-\frac 1 2 }\nabla_t((\overline u \cdot  \nabla) \Lambda_t^{-\frac 3 2 }\nabla_t \zeta_1  ))\rangle\\
    &- \langle A\zeta_2 ,A(  \Lambda_t^{\frac 1 2 }  ( (\overline u \cdot  \nabla) \Lambda_t^{-\frac 1 2 }  \zeta_2)    )\rangle\\
    &- \frac 1 {2\beta } \langle A( \p_y^t \Lambda_t^{-\frac 3 2 }\nabla_t((\overline u \cdot  \nabla) \Lambda_t^{-\frac 3 2 }\nabla_t \zeta_1  )), A\zeta_{2}  \rangle\\
    &+\frac 1 {2\beta } \langle \p_y^t \Lambda_t^{-1} \zeta_{1}, A(\Lambda_t^{\frac 1 2 }  ( (\overline u \cdot  \nabla) \Lambda_t^{-\frac 1 2 }  \zeta_{2})    )  \rangle. \label{eq:Eest}
\end{split}\end{align}
For the nonlinear terms we show that they are of the form 
\begin{align*}
\langle A f_1 , [A \Lambda_t^{\frac 1 2 },\overline u] \cdot  \nabla \Lambda_t^{-\frac 1 2 }  f_2)\rangle + \text{lower order terms }
\end{align*}
for $f\in \Gamma$. By partial integration we obtain for  $f\in \{ \underbrace{\Lambda_t^{-1}\nabla^\perp_t \zeta_1}_{=:z}, \zeta_2\} $, that 
\begin{align}\begin{split}
    \langle A\zeta_1 , A(   \Lambda_t^{-\frac 1 2 }\nabla_t((\overline u \cdot  \nabla) \Lambda_t^{-\frac 3 2 }\nabla_t \zeta_1  ))\rangle
    &=- \langle Az , \Lambda^{\frac 1 2 }_t A(   (\overline u  \cdot \nabla) \Lambda_t^{-\frac 1 2 } z )\rangle\\
    &=-  \langle Az , [\Lambda^{\frac 1 2 } _tA, \overline u] \cdot  \nabla \Lambda_t^{-\frac 1 2 } z  ))\rangle\\
    \langle A\zeta_2 ,A(  \Lambda_t^{\frac 1 2 }  ( (\overline u  \cdot \nabla) \Lambda_t^{-\frac 1 2 }  \zeta_2)    )\rangle&=\langle A\zeta_2 ,[  \Lambda_t^{\frac 1 2 }A,\overline u] \cdot  \nabla \Lambda_t^{-\frac 1 2 }  \zeta_2)    )\rangle \label{eq:eqcomm1}
\end{split}\end{align}
and thus we see that the first two nonlinear terms are in the desired form. 

Now we symmetrize the  third and fourth nonlinear terms of \eqref{eq:Eest}, which we denote as
\begin{align*}
    \langle A(   \p_y^t \Lambda_t^{-\frac 3 2 }\nabla_t((\overline u \cdot  \nabla) \Lambda_t^{-\frac 3 2 }\nabla_t \zeta_1  )), A\zeta_2  \rangle- \langle \p_y^t \Lambda_t^{-1} \zeta_1, A(\Lambda_t^{\frac 1 2 }  ( (\overline u  \cdot \nabla) \Lambda_t^{-\frac 1 2 }  \zeta_2)    )  \rangle
    :=\tilde I_1 +I_2. 
\end{align*}
We use $(\p_y^t)^2=-\Lambda_t^2 -\p_x^2$ to rewrite 
\begin{align*}
    \tilde I_1= \langle A(   \p_y^t \Lambda_t^{-\frac 3 2 }\nabla_t((\overline u \cdot  \nabla) \Lambda_t^{-\frac 3 2 }\nabla_t \zeta_1  )), A\zeta_{2}  \rangle&= \langle A(   (\p_y^t)^2\Lambda_t^{-\frac 3 2 }((\overline u  \cdot \nabla) \Lambda_t^{-\frac 3 2 }\p_y^t \zeta_1  )), A\zeta_2  \rangle\\
    &\quad + \langle A(   \p_x \p_y^t \Lambda_t^{-\frac 3 2 }((\overline u \cdot  \nabla) \Lambda_t^{-\frac 3 2 }\p_x \zeta_1  )), A\zeta_2  \rangle\\
    &= -\langle A(  \Lambda_t^{\frac 1 2 }((\overline u \cdot  \nabla) \Lambda_t^{-\frac 3 2 }\p_y^t \zeta_1  )), A\zeta_2  \rangle\\
    &\quad - \langle A(   \p_x^2 \Lambda_t^{-\frac 3 2 }((\overline u \cdot  \nabla) \Lambda_t^{-\frac 3 2 }\p_y^t \zeta_1  )), A\zeta_2  \rangle\\
    &\quad + \langle A(   \p_x \p_y^t \Lambda_t^{-\frac 3 2 }((\overline u \cdot  \nabla) \Lambda_t^{-\frac 3 2 }\p_x \zeta_1  )), A\zeta_2  \rangle\\
    :&=I_1+(\tilde I_1-I_1).
\end{align*}
With the identity 
\begin{align*}
    0 &= \langle \langle A\zeta_2  , (\overline u\cdot \nabla) A\p_y^t \Lambda_t^{-1} \zeta_1 \rangle+  \langle A\p_y^t \Lambda_t^{-1} \zeta_1  , (\overline u\cdot \nabla)  A\zeta_2 \rangle
\end{align*}
we infer 
\begin{align*}
     -(I_1+I_2) &=\langle A  \Lambda_t^{\frac 1 2 }((\overline u \cdot  \nabla) \Lambda_t^{-\frac 3 2 }\p_y^t \zeta_1  ), A\zeta_2  \rangle+\langle \p_y^t \Lambda_t^{-1} \zeta_1, A\Lambda_t^{\frac 1 2 }  ( (\overline u \cdot  \nabla) \Lambda_t^{-\frac 1 2 }  \zeta_2)      \rangle\\
     &= \langle A\zeta_2 , [A\Lambda_t^{\frac 12 }, \overline u ]\cdot \nabla \p_y^t \Lambda_t^{-\frac 3 2 } \zeta_1 \rangle + \langle \p_y^t \Lambda_t^{-1} \zeta_1, [A\Lambda_t^{\frac 1 2 } , \overline u ] \cdot \nabla \Lambda_t^{-\frac 12 }  \zeta_2 \rangle .
\end{align*}
For  $z^x=-\Lambda_t^{-1 }\p_y^t \zeta_1$ we have 
\begin{align}
     \vert I_1+I_2\vert &\le NL_{(z^x,\zeta_2)}+NL_{(\zeta_2,z^x)}.\label{eq:commNL}
\end{align}
Furthermore, we obtain 
\begin{align*}
    \tilde I_1-I_1&=-\langle A(   \p_x^2 \Lambda_t^{-\frac 3 2 }((\overline u \cdot  \nabla) \Lambda_t^{-\frac 3 2 }\p_y^t \zeta_1  )), A\zeta_2  \rangle+ \langle A   \p_x \p_y^t \Lambda_t^{-\frac 3 2 }((\overline u \cdot  \nabla) \Lambda_t^{-\frac 3 2 }\p_x \zeta_1  ), A\zeta_2  \rangle\\
    &= \langle A(   (\overline u \cdot  \nabla) \Lambda_t^{-\frac 3 2 }\nabla_t^\perp \zeta_1  ), A\p_x\nabla_t \Lambda_t^{-\frac 3 2 }\zeta_2  \rangle\\
    &= \langle A(   (\overline u \cdot  \nabla) \Lambda_t^{-\frac 3 2 }\nabla^\perp \zeta_1  ), A\p_x\nabla \Lambda_t^{-\frac 3 2 }\zeta_2  \rangle
\end{align*}
which yields
\begin{align}\label{eq:commONL}
    \vert \tilde I_1-I_1\vert & = LNL. 
\end{align}
Equation \eqref{eq:Eest} and the identities \eqref{eq:eqcomm1}, \eqref{eq:commONL} and \eqref{eq:commNL} yield Lemma \ref{lem:split}.

\section{The MHD Threshold}\label{sec:estmhd}
In this section, we use Theorem \ref{thm:main} to prove the threshold for the MHD equations in Theorem \ref{thm:thres}. We consider the MHD equations \eqref{MHD0} around Couette flow and a constant magnetic field 
\begin{align*}
    V_s (x,y) &= \begin{pmatrix}
        y \\ 0 
    \end{pmatrix}, &
    B_s(x,y)&= \alpha 
\end{align*}
for $\alpha \in \R^2\setminus\{ 0\} $. The perturbative unknowns 
\begin{align*}
    v(x,y,t)&= V(x+yt,y,t )- V_s, \\
    b(x,y,t)&= B(x+yt,y,t )- B_s,
\end{align*}
where the change of variables $x\mapsto x+yt$ follows the characteristics of the Couette flow. For these unknowns, equation \eqref{MHD0} becomes 
\begin{align}
\begin{split}
    \partial_t v + v^y e_1 - 2\partial_x \Delta^{-1}_t  \nabla_t v^y  &=  \nu  \Delta_t v+ (\alpha \cdot \nabla_t) b  + (b\cdot \nabla_t) b- (v\cdot \nabla_t) v-\nabla_t \pi , \\
    \partial_t b - b^y e_1 \qquad \qquad \quad \quad  \ &= \kappa \Delta_t   b+ (\alpha \cdot \nabla_t) v  +(b\cdot \nabla_t) v -(v\cdot\nabla_t) b,\\
    \dive_t(v)=\dive_t(b) &= 0.\label{eq:vb}
\end{split}
\end{align}
\begin{theorem}[MHD part of Theorem \ref{thm:thres}]\label{thm:MHD}
    Let $N\ge 12$, $\alpha  \in \R^2\setminus \{ 0 \}$ and $0<\kappa , \nu\le \frac 1 {10} $, with either $\alpha_2 \neq 0$ or $\kappa^3 \le \nu \le \kappa^{\frac 13 }$, $\mu: =\min(\nu,\kappa) $ and $r>0$. Then there exist $\delta_{\alpha,r},c >0$ such that if the initial data satisfies 
    \begin{align*}
        \Vert v_{in}, b_{in}\Vert_{H^N }= \eps \le \delta_{\alpha,r} {\vert \ln(\mu)\vert^{-(1+r)} } \mu^{\frac 1 3 },
    \end{align*}
    then the solution $(v,b)$ of \eqref{eq:vb} is in $ C_t H^N$ and satisfies the following: \\
    \textbf{Stability of velocity and magnetic field} 
        \begin{align*}
        \sup_{t\ge 0} \Vert v,b  \Vert_{H^{N} } &\lesssim \eps .
    \end{align*}    
     \textbf{Inviscid damping and Enhanced dissipation,} for all $t\ge 0$ we obtain 
    \begin{align*}
        \Vert v^x_{\neq},b^x_{\neq}  \Vert_{H^{N }}+ \langle t\rangle \Vert v^y, b^y \Vert_{H^{N-1  }}&\lesssim \eps  e^{-c\mu^{\frac 1 3 } t }.
    \end{align*}  
    
\end{theorem}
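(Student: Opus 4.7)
\textbf{Proof plan for Theorem \ref{thm:MHD}.}
I follow the three-step framework of Section~1, now with the MHD parameters $\gamma = \tilde\gamma = 0$ as indicated by the Remark after Theorem~\ref{thm:main}. For step~1, I take $v$ and $b$ themselves as the $f$-variables and introduce the streamfunction potential $q = \Lambda_t^{-1}\nabla_t^\perp \cdot v$, so that $v = -\Lambda_t^{-1}\nabla_t^\perp q$. Substituting into \eqref{eq:vb}, the leading nonlinear contribution on each equation takes the form $(\nabla_t^\perp \Lambda_t^{-1} q \cdot \nabla_t) f$ for $f \in \{v, b\}$, which is exactly the structure to which Theorem~\ref{thm:main} applies at $\gamma = \tilde\gamma = 0$; the residual Lorentz stretching terms $(b \cdot \nabla_t) b$ and $(b \cdot \nabla_t) v$ contribute only lower-order commutator errors, analogous to the $LNL$ term of Lemma~\ref{lem:split}.

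For step~2, I use a weight $A = \langle k, \eta\rangle^N e^{c\mu^{1/3}t \textbf{1}_{k\neq 0}} m^{-1}$ with $m = m_0 M_L^{MHD} M_\mu$, where the novel ingredient $M_L^{MHD}$ is modeled on $M_L^\theta$ of \eqref{eq:M1weight} but keyed to the Alfv\'en symbol $\alpha \cdot (k, \eta - kt) = \alpha_1 k + \alpha_2(\eta - kt)$. In the generic case $\alpha_2 \neq 0$, this symbol vanishes at a single critical time, producing a $t$-integrable resonance profile; the resulting $M_L^{MHD}$ is shown $0$-admissible in the sense of Definition~\ref{def:admiss} by a computation analogous to Lemma~\ref{lem:M1}. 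In the degenerate case $\alpha_2 = 0$, the symbol $\alpha_1 k$ is stationary in time and no oscillatory linear weight can be built from it; instead the assumption $\nu^3 \le \kappa \le \nu^{1/3}$ is used to absorb the uncompensated low-frequency magnetic interactions directly into $\nu\Vert\nabla_t Av\Vert_{L^2}^2 + \kappa\Vert\nabla_t Ab\Vert_{L^2}^2$, and one may take $M_L^{MHD} \equiv 1$.

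For step~3, I define an energy $\calE(t)$ of the form $\tfrac12 \Vert Av\Vert_{L^2}^2 + \tfrac12 \Vert Ab\Vert_{L^2}^2$ plus a symmetrizing cross term built from the Alfv\'en coupling, playing the same role as the $\langle \p_y^t \Lambda_t^{-1} A\zeta_1, A\zeta_2\rangle$ cross term of Section~\ref{sec:estbou}. Running the bootstrap as in Proposition~\ref{pro:Boot}, the computation of $\p_t \calE$ splits into: a linear part absorbed by $c\mu^{1/3}\Vert A(v,b)\Vert_{L^2}^2$, $\Vert\sqrt{\p_t m/m}\, A(v,b)\Vert_{L^2}^2$ and $c\mu\Vert\nabla_t A(v,b)\Vert_{L^2}^2$ via Lemmas~\ref{lem:m1}, \ref{lem:M2} and the resonance structure of $M_L^{MHD}$; a main nonlinearity bounded by Theorem~\ref{thm:main} at $\gamma = 0$, costing $|\ln(\mu)|^{1+r}\mu^{-1/3}\eps^2_f\eps_q$ and pinning the threshold at $\eps \lesssim \mu^{1/3}|\ln(\mu)|^{-(1+r)}$; and lower-order corrections controlled pointwise as in the $LNL$ bound of Section~\ref{sec:estbou}. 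Closing the bootstrap yields $\Vert A(v, b)\Vert_{L^\infty L^2} \lesssim \eps$, from which stability, inviscid damping $\langle t\rangle\Vert v^y, b^y\Vert_{H^{N-1}} \lesssim \eps e^{-c\mu^{1/3}t}$, and enhanced dissipation follow from the definition of $A$ together with $\Vert\Lambda_t^{-1}\cdot_{\neq}\Vert_{H^{N-1}} \lesssim t^{-1}\Vert\cdot_{\neq}\Vert_{H^N}$.

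The main obstacle is the degenerate case $\alpha_2 = 0$: there the Alfv\'en oscillation is absent and the $|\ln(\mu)|^{1+r}\mu^{-1/3}$ loss from Theorem~\ref{thm:main} must be compensated purely by dissipation. Matching the asymmetric dissipations $\nu$ and $\kappa$ against the Lorentz cross nonlinearity requires a delicate interpolation whose range of validity is precisely $\nu^3 \le \kappa \le \nu^{1/3}$; outside this range the imbalance produces a surplus factor that cannot be absorbed by the available dissipative budget, explaining the dissipative hypothesis in the statement.
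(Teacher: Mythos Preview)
Your plan has two concrete gaps. First, you never introduce the adapted velocity. In \eqref{eq:vb} the shearing terms $v^y e_1$ and $-b^y e_1$ cause secular growth of $v^x,b^x$; the Alfv\'en coupling $(\alpha\cdot\nabla_t)$ is antisymmetric and already cancels in $\|Av\|^2+\|Ab\|^2$, so a ``cross term built from the Alfv\'en coupling'' does not touch the problematic shear. The paper instead sets $\tilde v = v + \tfrac{1}{\alpha_1}\partial_x^{-1}b^y_{\neq}e_1$ (resp.\ $\tilde v = v + \tfrac{\alpha\cdot\nabla_t}{\langle\alpha\cdot\nabla_t\rangle^2}b^y_{\neq}e_1$ when $\alpha_2\neq 0$), which eliminates both shear terms at the linear level; the energy is then simply $\|A(\tilde v,\tilde b)\|_{L^2}^2$ with no cross term at all. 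Without this adaptation (or an explicitly specified cross term producing the same cancellation, which your proposal does not give), the linear part of the bootstrap does not close.

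Second, your treatment of the $\alpha_2=0$ case is off in two places. You cannot take $M_L^{MHD}\equiv 1$: the paper still needs the standard weight $M_{L,(0,1),c/\alpha_1}$ to absorb the residual coupling $\tfrac{2}{\alpha_1}\langle \tilde b^y,\partial_x\Delta_t^{-1}\tilde v^y\rangle$ coming from $2\partial_x\Delta_t^{-1}\nabla_t v^y$. And the hypothesis $\kappa^3\le\nu\le\kappa^{1/3}$ is not used for any nonlinearity; it enters only through the \emph{linear} dissipation-mismatch term $\tfrac{1}{\alpha_1}(\kappa-\nu)\partial_x^{-1}\Delta_t b^y_{\neq}$ that the adapted variable $\tilde v$ creates, and is precisely what makes $\nu^{1/2}\kappa^{-1/6}+\kappa^{1/2}\nu^{-1/6}\lesssim 1$ in that estimate. (When $\alpha_2\neq 0$ the analogous mismatch carries a bounded multiplier $\tfrac{\alpha\cdot\nabla_t}{\langle\alpha\cdot\nabla_t\rangle^2}$ and is absorbed by the Alfv\'en weight $M_{L,\alpha,c_2}$, which is why the ratio hypothesis is then unnecessary.) Finally, the $|\ln\mu|^{1+r}$ loss from Theorem~\ref{thm:main} is present for \emph{both} values of $\alpha_2$ and is paid for by the smallness assumption on $\eps$, not by the dissipative budget.
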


In the remainder of this section, we prove Theorem \ref{thm:MHD}. We distinguish between constant magnetic fields with only horizontal components $\alpha = \alpha_1 e_1$ and constant magnetic fields with vertical and prove the threshold. In both cases, we define an adapted velocity field $\tilde v$ and perform an energy estimate. Let $d\in \R^2\setminus \{0\}$ and $c_1>0$, we define the linear weight
\begin{align}\begin{split}
    M_{L,d,c_1} (k,\eta)  &=
    \exp\left( c_1^{-1} \int^t_{\infty} \tfrac 1 {\sqrt {1 +(d_1 + d_2(t-\frac \eta k ))^2 }}\right),\qquad \qquad  k\neq 0,  \label{Malpha}\\
     M_{L,d,c_1}(0,\eta) &=  M_{L,d,c_1}(1,\eta) .
\end{split}\end{align}
The weight $M_{L,d,c_1}$ is a generalisation of the standard weight which we obtain for $d=(0,1)$. We obtain the following properties: 
\begin{lemma}\label{lem:Mb} Let $d\in \R^2$ and $c_1>0$ such that $d_2 \neq 0$ and $(k,\eta), (l,\xi) \in \bbZ\times \R $, then $M_{L,d,c_1} $ is monoton increasing and satisfies 
    \begin{enumerate}[label=(\roman*)]
        \item $ 1\le M_{L,d,c_1} (k,\eta ) \le e^{2\vert d_2\vert ^{-1}\pi} .$
    \item For $k\neq 0$ and  $\vert l,\xi \vert \ge \vert k-l,\eta-\xi\vert$ it holds 
    \begin{align*}
       \vert {M_{L,d,c_1} (k,\eta)}-{M_{L,d,c_1} (k,\xi)} \vert  &\lesssim \tfrac {\vert \eta -\xi\vert } {\vert k\vert} 
    \end{align*}
    \item For $\vert l,\xi \vert \ge \vert k-l,\eta-\xi\vert$ it holds 
    \begin{align*}
        \frac {\vert l,\xi\vert}{t} \vert M_{L,d,c_1} (k,\eta) -M_{L,d,c_1}(l,\eta ) \vert &\lesssim \left( 1+\mu^{-\frac 13 } \vert \ln(\mu)\vert^{1+r} \sqrt{\frac{\p_t m_0 }{m_0} (k,\eta ) }\sqrt{\frac{\p_t m_0 }{m_0} (l,\xi ) } \right) \langle  k-l, \eta-\xi \rangle^5.
    \end{align*}
    \end{enumerate}
    In particular, $M_{L,d,c_1}$ is $\gamma=0$-admissible. 
\end{lemma}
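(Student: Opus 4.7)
\textbf{Proof plan for Lemma \ref{lem:Mb}.} The plan is to prove the three items (i)--(iii) in order and then read off $\gamma=0$-admissibility by matching against Definition~\ref{def:admiss}.

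For (i), monotonicity is immediate, since
\[
\frac{\partial_t M_{L,d,c_1}}{M_{L,d,c_1}}(k,\eta) \;=\; \frac{c_1^{-1}}{1+(d_1+d_2(t-\tfrac{\eta}{k}))^2}\;\ge\;0,
\]
and the upper bound comes from the substitution $u=d_1+d_2(\tau-\eta/k)$, which reduces the exponent to a constant multiple of $\int_{\mathbb{R}} du/(1+u^2)=\pi$. For (ii), applying $\partial_\eta$ to the integrand pulls out a factor $d_2/k$ against a kernel with uniformly bounded $L^1_\tau$ norm; hence $|\partial_\eta \log M_{L,d,c_1}|\lesssim 1/|k|$, and the mean value theorem gives the claim.

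The main work is (iii). I would split
\[
M_{L,d,c_1}(k,\eta)-M_{L,d,c_1}(l,\xi)\;=\;\bigl[M_{L,d,c_1}(k,\eta)-M_{L,d,c_1}(k,\xi)\bigr]+\bigl[M_{L,d,c_1}(k,\xi)-M_{L,d,c_1}(l,\xi)\bigr],
\]
handle the first bracket by (ii), and for the second work with $\phi := \log M_{L,d,c_1}$. Substituting $u_k = d_1 + d_2(\tau-\xi/k)$ and its analogue for $l$, together with the elementary identity
\[
\frac{1}{1+u_k^2}-\frac{1}{1+u_l^2}\;=\;\frac{(u_l-u_k)(u_k+u_l)}{(1+u_k^2)(1+u_l^2)},\qquad u_l-u_k \;=\; \frac{d_2\,\xi\,(k-l)}{kl},
\]
and integrating in $\tau$ against the integrable envelope $(1+u_k^2)^{-1}+(1+u_l^2)^{-1}$ yields $|\phi(k,\xi)-\phi(l,\xi)|\lesssim |\xi|\,|k-l|/(|kl|)$. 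Multiplying by $|l,\xi|/t$, the regime $|l,\xi|\ge|k-l,\eta-\xi|$ is split according to whether $|l|\ge|k-l|$ (so $|k|\sim|l|$) or not, and the estimate is reduced to expressions bounded by $\langle t-\xi/l\rangle^{-1}/t$ times polynomial powers of $\langle k-l,\eta-\xi\rangle$, which can accommodate the $\langle k-l,\eta-\xi\rangle^5$ factor since $N\ge 12$.

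To extract the admissible structure, I would then apply Lemma~\ref{lem:m1}(iii) at $\gamma=0$, combined with the case distinction $\langle s\rangle\le\mu^{-1}$ versus $\langle s\rangle\ge\mu^{-1}$ already used in the proof of Lemma~\ref{lem:m1}(vi), to replace $\langle t-\xi/l\rangle^{-1}$ by $|\ln\mu|^{1+r}\cdot(\partial_t m_0/m_0)(l,\xi)\,\langle k-l,\eta-\xi\rangle^5+\mu$. The $1/t$ factor is absorbed by splitting $t\le\mu^{-1/3}$ versus $t\ge\mu^{-1/3}$ and using Cauchy--Schwarz on two copies of $\partial_t m_0/m_0$ to symmetrize in $(k,\eta)$ and $(l,\xi)$; this is the source of the $\mu^{-1/3}$ prefactor in (iii). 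Admissibility then follows by matching (i)--(iii) directly to the four conditions of Definition~\ref{def:admiss}. The main obstacle is the bookkeeping in (iii): carefully tracking the regimes $|l|$ vs.\ $|k-l|$ and $|\xi|$ vs.\ $|l|$, and checking that the substitution-plus-logarithmic trick used for $m_0$ transfers cleanly to $M_{L,d,c_1}$, whose integrand lacks the logarithmic damping of $m_0$ but enjoys the faster $(1+u^2)^{-1}$ decay, with the $|\ln\mu|^{1+r}$ loss precisely accounting for this mismatch.
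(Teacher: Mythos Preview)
Your treatment of (i) and (ii) is fine and matches the paper. The gap is in (iii), specifically in the step where you integrate the difference against the envelope $(1+u_k^2)^{-1}+(1+u_l^2)^{-1}$ and conclude $|\phi(k,\xi)-\phi(l,\xi)|\lesssim |\xi|\,|k-l|/|kl|$. This bound is correct but it has discarded all information about the upper limit of integration $t$: you have replaced $\int_{-\infty}^t$ by $\int_{\R}$. As a consequence there is no time-resonance factor left, and your subsequent claim that ``the estimate is reduced to expressions bounded by $\langle t-\xi/l\rangle^{-1}/t$ times polynomial powers'' simply does not follow. Concretely, in the regime $|l|\gtrsim|k-l|$ (so $|k|\sim|l|$) and $|\xi|\sim |l|t$, your bound gives
\[
\frac{|l,\xi|}{t}\cdot\frac{|\xi|\,|k-l|}{|kl|}\;\sim\;\frac{\xi^2}{tl^2}\,|k-l|\;\sim\;t\,|k-l|,
\]
which grows linearly in $t$ and cannot be controlled by the right-hand side of (iii): there the only $t$-growth available is through $(t+\mu^{-1/3})\sqrt{\partial_t m_0/m_0}$, and $\partial_t m_0/m_0(l,\xi)$ can be as small as $(\langle t-\xi/l\rangle|\ln\langle t-\xi/l\rangle|^{1+r})^{-1}$, which is arbitrarily small away from the exact resonance.

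The paper's fix is to retain a factor of $\langle t-\xi/k\rangle^{-1}$ (or $\langle t-\xi/l\rangle^{-1}$) in the commutator. It does this by a case split: when $t\notin[\xi/l,\xi/k]$, one applies the mean value theorem in the \emph{first} variable (treating $k$ as real), which produces $\partial_k M(n,\xi)\sim\frac{\xi}{n^2}\frac{1}{1+(d_1+d_2(t-\xi/n))^2}$ evaluated at some intermediate $n\in[k,l]$; monotonicity of $s\mapsto(1+s^2)^{-1}$ away from $0$ then lets one replace $n$ by the endpoints $k,l$. When $t\in[\xi/l,\xi/k]$, the crude bound you wrote \emph{is} used, but now $\langle t-\xi/l\rangle\le\langle\xi/k-\xi/l\rangle\lesssim|\xi|\,|k-l|/l^2$, so one can insert the factor $\langle t-\xi/l\rangle^{-1}$ for free at the cost of one extra power of $|k-l|$. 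Only after this resonance factor is in place does the $\langle s\rangle\le\mu^{-1}$ versus $\langle s\rangle\ge\mu^{-1}$ dichotomy and the passage to $\partial_t m_0/m_0$ go through.
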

The proof can be found in the appendix.

\subsection{Horizontal Constant Magnetic Field}
Let $\alpha = \alpha_1 e_1$, we prove the stability of
\begin{align}\begin{split}\label{eq:vb1}
    \partial_t v + v^y e_1 - 2\partial_x \Delta^{-1}_t  \nabla_t v^y  &=  \nu  \Delta_t v+ \alpha_1 \partial_x b  + (b\cdot \nabla_t b)- (v\cdot \nabla_t) v-\nabla_t \pi , \\
    \partial_t b - b^y e_1 \qquad \qquad \quad \quad  \ \ &= \kappa \Delta_t   b+ \alpha_1 \partial_x v  +(b\cdot \nabla_t) v -(v\cdot \nabla_t) b,\\
    \dive_t(v)=\dive_t(b) &= 0.
\end{split}\end{align}
 For small values of $\alpha_1$, due to the lack of interaction, the $-b^y e_1$ term leads to growth. We adapt to this effect by introducing the adapted velocity (cf. \cite{NiklasMHD2024})
\begin{align*}
    \tilde v &= v + \tfrac 1 {\alpha_1} \p_x^{-1} b^y_{\neq} e_1, & \tilde b&= b.
\end{align*}
Then \eqref{eq:vb1} changes to 
 \begin{align}\begin{split}
     \p_t \tilde v  &= \nu\Delta_t \tilde v+\tfrac 1 {\alpha_1}  (\kappa-\nu) \p_x^{-1} \Delta_t  b^y_{\neq } e_1 + \alpha_1 \p_x  b+  2\partial_x \Delta^{-1}_t  \nabla_t \tilde v^y\\
     &\quad + (b\cdot \nabla_t) b- (v\cdot \nabla_t) v-\nabla_t \pi+ e_1 \tfrac 1 {\alpha_1} \p_x^{-1} ((b\cdot \nabla_t) v^y -(v\cdot \nabla_t) b^y)_{\neq} ,\\
     \p_t \tilde b &=\kappa\Delta_t \tilde b + \alpha_1 \p_x \tilde  v  +(b\cdot \nabla_t) v -(v\cdot \nabla_t) b,\\
     \dive_t(\tilde v) &=\tfrac 1 {\alpha_1} b^y_{\neq} , \quad \dive_t( b) =0. \label{eq:Lintvb}
\end{split} \end{align}

For $\mu=\min(\nu,\kappa)$ we define the weights
\begin{align*}
    A(k,\eta )&= \langle k ,\eta  \rangle^{N} e^{ c \mu^{\frac 1 3}t\textbf{1}_{k\neq 0}}m^{-1}(k,\eta),\\
    m(k,\eta )&=  m_0(k,\eta ) M_L^b (k,\eta ) M_\nu(k,\eta )M_\kappa(k,\eta ),
\end{align*}
with $M_L^{b}$ defined as 
\begin{align*}
    M_L^{b }(k,\eta)&=M_{L,(0,1), \frac c {\alpha_1}}(k,\eta), 
\end{align*}
i.e. the weight \eqref{Malpha} with $d=e_2$ and $c_1= \frac c {\alpha_1}$.

For times $0\le t\le 1 $ the norm stays bounded by classical energy estimates,  $ \Vert  A (\tilde v,\tilde b) \Vert^2_{L^2} \le C_1 \eps^2$ for some $C_1>1$.  In the following, we prove that the energy $ \Vert  A (\tilde v,\tilde b) \Vert^2_{L^2}$ stays bounded for times $t\ge 1$. We do this by a Bootstrap method, for $1<t$, the bootstrap assumption states:
\begin{align}
\begin{split}\label{bootMHD}
   \Vert  A (\tilde v,\tilde b) \Vert^2_{L^2}  +\int_1^t \Vert \Lambda_t A   (\sqrt \nu\tilde v,\sqrt \kappa \tilde b)\Vert^2_{L^2} +\Vert \sqrt {\tfrac {\p_t m }m}  (\tilde v,\tilde b)  \Vert_{L^2}^2 d\tau&\le C_2\eps^2.
\end{split}
\end{align}

\begin{prop}\label{pro:BootMHD}
Let $(\tilde v,\tilde  b) $ be a solution to \eqref{eq:Lintvb} which satisfies the Bootstrap hypotheisis \eqref{bootMHD} for $1\le t$, then there exists a for a $C_3=C_3(C_2,N,  \alpha , c  ) $ such that 
\begin{align*} \Vert  A (\tilde v,\tilde b) \Vert^2_{L^2}  +\int_1^t \Vert \Lambda_t A   (\sqrt \nu\tilde v,\sqrt \kappa \tilde b)\Vert^2_{L^2} +\Vert \sqrt {\tfrac {\p_t m }m}  (\tilde v,\tilde b)  \Vert_{L^2}^2 d\tau&\le (C_1 + 3 cC_2 + C_3 \delta) \eps^2.
\end{align*}
\end{prop}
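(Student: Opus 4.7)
\textbf{Proof proposal for Proposition \ref{pro:BootMHD}.}

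The plan is to mirror the Boussinesq argument of Proposition \ref{pro:Boot}: I work with the energy $\calE(t)=\tfrac{1}{2}\Vert A(\tilde v,\tilde b)\Vert_{L^2}^2$, derive an identity
\[
    \p_t \calE + 4c\bigl(\nu\Vert \Lambda_t A\tilde v\Vert_{L^2}^2 + \kappa\Vert \Lambda_t A\tilde b\Vert_{L^2}^2\bigr) + 4c\Vert\sqrt{\tfrac{\p_t m}{m}}A(\tilde v,\tilde b)\Vert_{L^2}^2 \le L + NL + LNL ,
\]
and show that $\int_1^t L\,d\tau \le 3cC_2\eps^2$, $\int_1^t (NL+LNL)\,d\tau \lesssim \delta\eps^2$, using Theorem~\ref{thm:main} with $\gamma=\tilde\gamma=0$. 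Combined with $\calE(1)\le C_1\eps^2$, this proves the proposition.

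\textbf{Linear terms.} After differentiating $A$ and pairing $A(\tilde v,\tilde b)$ with \eqref{eq:Lintvb}, the Alfv\'en coupling $\alpha_1\p_x$ is skew-symmetric in the $(\tilde v,\tilde b)$ pairing and cancels. The residual linear terms are: (a) the transport-of-shear term $v^y e_1 - 2\p_x\Delta_t^{-1}\nabla_t v^y$, which after the substitution $\tilde v = v+\tfrac{1}{\alpha_1}\p_x^{-1}b^y_{\neq}e_1$ and pairing with $\alpha_1\p_x\tilde v$ in the $\tilde b$ equation produces a quantity of the form $\langle A\tilde v, \tfrac{\p_x^2}{\Lambda_t^2}A\tilde b\rangle \lesssim \sum_k\int\tfrac{1}{\langle t-\eta/k\rangle}|A(\tilde v,\tilde b)|^2$, which is absorbed by $\tfrac{\p_t M_L^b}{M_L^b}$ via Lemma~\ref{lem:Mb}(i); (b) the resistivity-correction $\tfrac{1}{\alpha_1}(\kappa-\nu)\p_x^{-1}\Delta_t b^y_{\neq}$, which is controlled by the $\nu\Vert\Lambda_t A\tilde v\Vert^2_{L^2}$ and $\kappa\Vert\Lambda_t A\tilde b\Vert^2_{L^2}$ dissipation terms via Young's inequality; (c) the enhanced-dissipation gain $c\mu^{1/3}\Vert A(\tilde v,\tilde b)\Vert_{L^2}^2$ is absorbed by $\tfrac{\p_t M_\mu}{M_\mu}$ as in Lemma~\ref{lem:M2}(ii). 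Choosing the constant in $M_L^b$ as $c/\alpha_1$ guarantees the signs work out, and integrating yields $\int_1^t L\,d\tau\le 3cC_2\eps^2$.

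\textbf{Main nonlinearity.} The nonlinear transport terms $(v\cdot\nabla_t)v$, $(v\cdot\nabla_t)b$, $(b\cdot\nabla_t)v$, $(b\cdot\nabla_t)b$, after pressure projection, symmetrisation (using $\langle Af,(v\cdot\nabla_t)Ag\rangle+\langle Ag,(v\cdot\nabla_t)Af\rangle = \langle \dive_t(v)Af,Ag\rangle$, which vanishes for $v$), and commutator expansion, reduce to expressions of the form
\[
    \bigl\langle A f_1,\,[A,\nabla_t^\perp\Lambda_t^{-1}q]\cdot\nabla_t f_2\bigr\rangle, \qquad f_i\in\{\tilde v^j,\tilde b^j\},\ q=\Lambda_t^{-1}\nabla_t^\perp\!\cdot v.
\]
These match the form of $NL^0_{f_1,f_2,q}$ in Theorem~\ref{thm:main} with $\gamma=\tilde\gamma=0$. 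Applying Theorem~\ref{thm:main}, together with the admissibility of $M_L^b$ (Lemma~\ref{lem:Mb}) and of $M_\nu,M_\kappa$ (Lemma~\ref{lem:M2}), gives $\int_1^t NL\,d\tau \lesssim \vert\ln\mu\vert^{1+r}\mu^{-1/3}\eps^3$. Since $\eps\le \delta_{\alpha,r}|\ln\mu|^{-(1+r)}\mu^{1/3}$, one factor absorbs as $|\ln\mu|^{1+r}\mu^{-1/3}\eps\le\delta$, so $\int_1^t NL\,d\tau\lesssim\delta\eps^2$.

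\textbf{Lower-order nonlinearities.} The terms $LNL$ coming from the pressure $\nabla_t\pi$ reconstructed from $\dive_t\tilde v=\tfrac{1}{\alpha_1}b^y_{\neq}$ and from the nonlinear corrections $\tfrac{1}{\alpha_1}\p_x^{-1}((b\cdot\nabla_t)v^y-(v\cdot\nabla_t)b^y)_{\neq}$ all carry an extra $\p_x^{-1}$ or $\Lambda_t^{-1}$ (inviscid-damping gain on the zero-mean nonzero modes). These contribute integrable factors $\int \langle t\rangle^{-1}\,d\tau$ after exploiting the $L^2_tL^2$ bound from Lemma~\ref{cor:L2}, yielding $\int_1^t LNL\,d\tau\lesssim \mu^{-1/3}\eps^3\lesssim\delta\eps^2$.

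\textbf{Main obstacle.} The delicate step is item (a) in the linear analysis: identifying the surviving coupling between $v^y$-growth and the $-b^y e_1$ stretching, after the $\tilde v$-substitution, as exactly the resonant quantity $\langle t-\eta/k\rangle^{-1}$ that $M_L^b$ was designed to dissipate, and ensuring the constant $c/\alpha_1$ in \eqref{Malpha} is large enough to absorb the coefficient. Once this algebraic cancellation is secured, the rest reduces to a direct application of Theorem~\ref{thm:main} together with the multiplier estimates already established in Lemmas~\ref{lem:m1}, \ref{lem:M2} and \ref{lem:Mb}.
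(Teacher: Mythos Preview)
Your proposal follows the same overall approach as the paper: the energy splitting $L+NL+LNL$ (the paper records this as Lemma~\ref{lem:MHDsplit}), Theorem~\ref{thm:main} at $\gamma=\tilde\gamma=0$ for $NL$, and direct estimates for $L$ and $LNL$. Two corrections are needed.

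First, in the main nonlinearity, $q$ must range over both $\Lambda_t^{-1}\nabla_t^\perp\cdot v$ and $\Lambda_t^{-1}\nabla_t^\perp\cdot b$, since both $(v\cdot\nabla_t)$ and $(b\cdot\nabla_t)$ transport terms appear in \eqref{eq:Lintvb}; this is a simple omission and the argument is identical.

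Second, and more substantively, your treatment of linear term (b) by ``Young's inequality'' alone does not recover the full range $\kappa^3\le\nu\le\kappa^{1/3}$. A naive Young argument on $(\kappa-\nu)\langle A\tilde v^x,\p_x^{-1}\Delta_t A\tilde b^y_{\neq}\rangle$ leaves a factor like $(\kappa-\nu)^2/\nu$ or $(\kappa-\nu)^2/\kappa$ multiplying $\Vert A(\tilde v,\tilde b)_{\neq}\Vert_{L^2_tL^2}^2\sim\mu^{-1/3}\eps^2$, which forces the stricter condition $\nu\le\kappa^{2/3}$ (or symmetrically). The paper instead uses the identity $\p_x^{-1}\Delta_t b^y_{\neq}=\nabla_t^\perp\cdot b_{\neq}$, splits $|\kappa-\nu|\le\nu+\kappa$, and for each piece places the $\Lambda_t$ on the factor with matching dissipation: this gives
\[
\int_1^t L_{(\kappa-\nu)}\,d\tau \lesssim \nu^{1/2}\kappa^{-1/6}\eps^2+\kappa^{1/2}\nu^{-1/6}\eps^2,
\]
via $\Vert\Lambda_t A\tilde v\Vert_{L^2_tL^2}\lesssim\nu^{-1/2}\eps$, $\Vert\Lambda_t A\tilde b\Vert_{L^2_tL^2}\lesssim\kappa^{-1/2}\eps$ from \eqref{bootMHD} and $\Vert A(\tilde v,\tilde b)_{\neq}\Vert_{L^2_tL^2}\lesssim\mu^{-1/6}\eps$ from Lemma~\ref{cor:L2}. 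The bound $\nu^{1/2}\kappa^{-1/6}+\kappa^{1/2}\nu^{-1/6}\lesssim 1$ is then exactly the condition $\kappa^3\le\nu\le\kappa^{1/3}$.
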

The proof of Theorem \ref{thm:MHD} from Proposition \ref{pro:BootMHD} is the same as for the Boussinesq equations. To prove this proposition, we use two Lemmas, 
\begin{lemma}\label{lem:MHDsplit}
    Let the bootstrap assumption \eqref{bootMHD} hold for $0\le t \le t^\ast$, then it holds that 
    \begin{align*}
        \p_t \Vert A (\tilde v,\tilde b)\Vert_{L^2}^2 +\Vert \Lambda_t A(\sqrt \nu \tilde v, \sqrt \kappa \tilde b)\Vert_{L^2}^2 + \Vert \sqrt{\tfrac {\p_t m } m }A(\tilde u ,\tilde b ) \Vert_{L^2 } ^2&\lesssim L +\sum_{(f,q ) \in \Gamma} NL_{f,q}  +LNL 
    \end{align*}
    with  $\Gamma =\{(f_1,f_2,q): f_i \in \{v,b\} , \ q \in \{\Lambda_t^{-1}\nabla^\perp_t \cdot v,\Lambda_t^{-1}\nabla^\perp_t \cdot b\}\}$. Where we denote \\
\textbf{Liner terms:}
\begin{align*}
    L&= c \mu^{\frac 1 3}\Vert A(\tilde v_{\neq},\tilde b_{\neq})\Vert_{L^2}^2 +\tfrac 1 {\alpha_1}  \langle A \tilde v^x ,  (\kappa-\nu) \p_x^{-1} \Delta_t A \tilde b^y_{\neq }\rangle + \tfrac 2 {\alpha_1} \langle \tilde b^y,\partial_x \Delta^{-1}_t   \tilde v^y \rangle 
\end{align*}
\textbf{Main nonliner term:} 
\begin{align*}
    NL_{f,q}&= \langle Af_1 , [A,\Lambda_t^{-1}\nabla^\perp q ] \cdot \nabla f_2\rangle 
\end{align*}
\textbf{Lower nonliner term:}
\begin{align*}
    LNL&=\frac 1{\alpha_1} \langle A\tilde v^x_{\neq}, A\p_x^{-1} ((b\cdot \nabla_t) v^y -(v\cdot \nabla_t) b^y)_{\neq}\rangle  \\
    &+\frac 1 {\alpha_1}\langle A\p_x^{-1} b_{\neq}^y, A((b\cdot \nabla_t) v^x-(v\cdot \nabla_t) b^x)_{\neq}\rangle \\
    &+\frac 1 {\alpha_1}  \langle A b^y_{\neq}, A\pi_{\neq}\rangle
\end{align*}
\end{lemma}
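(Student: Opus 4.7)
The plan is to compute $\tfrac{1}{2}\p_t \Vert A(\tilde v,\tilde b)\Vert_{L^2}^2$ by differentiating and substituting \eqref{eq:Lintvb} term by term, and to reorganize the result into the three groups $L$, $\sum_{(f,q)\in\Gamma} NL_{f,q}$, and $LNL$. First I would split
\[
\tfrac{1}{2}\p_t \Vert A(\tilde v,\tilde b)\Vert_{L^2}^2 = \bigl\langle \tfrac{\p_t A}{A}A(\tilde v,\tilde b), A(\tilde v,\tilde b)\bigr\rangle + \bigl\langle A(\tilde v,\tilde b), A\p_t(\tilde v,\tilde b)\bigr\rangle,
\]
and use $\p_t A/A = c\mu^{1/3}\textbf{1}_{k\neq 0} - \p_t m/m$. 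This produces the $c\mu^{1/3}\Vert A(\tilde v_{\neq},\tilde b_{\neq})\Vert_{L^2}^2$ first summand of $L$, while the $\p_t m/m$ part supplies the coercive $\Vert\sqrt{\p_t m/m}\,A(\tilde v,\tilde b)\Vert_{L^2}^2$ on the left. The dissipative terms $\nu\Delta_t\tilde v$ and $\kappa\Delta_t\tilde b$ integrate by parts to deliver the $\Vert\Lambda_t A(\sqrt\nu\tilde v,\sqrt\kappa\tilde b)\Vert_{L^2}^2$ coercive contribution. The linear exchange terms $\alpha_1\p_x\tilde b$ in the $\tilde v$-equation and $\alpha_1\p_x\tilde v$ in the $\tilde b$-equation pair to zero by the skew-adjointness of $\p_x$.

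The two remaining linear pieces in \eqref{eq:Lintvb} enter $L$ directly. The $(\kappa-\nu)/\alpha_1\,\p_x^{-1}\Delta_t b^y_{\neq}e_1$ summand gives the second summand of $L$ upon pairing with $A\tilde v^x$. For $2\p_x\Delta_t^{-1}\nabla_t\tilde v^y$ I would exploit $\langle A\tilde v,\nabla_t g\rangle = -\langle A\,\dive_t\tilde v,g\rangle$ together with $\dive_t\tilde v = (1/\alpha_1)b^y_{\neq}$, which converts this term into the third summand of $L$ involving $\langle \tilde b^y,\p_x\Delta_t^{-1}\tilde v^y\rangle$. The pressure $-\nabla_t\pi$ is handled by the same IBP identity and yields $-(1/\alpha_1)\langle Ab^y_{\neq},A\pi_{\neq}\rangle$, the pressure summand of $LNL$.

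For the quadratic terms, I would decompose every occurrence of $v$ inside the nonlinearity as $v = \tilde v - (1/\alpha_1)\p_x^{-1}b^y_{\neq}e_1$. The transport pairings $\langle A\tilde v,-A(v\cdot\nabla_t)\tilde v\rangle$ and $\langle Ab,-A(v\cdot\nabla_t)b\rangle$ reduce to commutators $[A,v\cdot\nabla_t]$ because the symmetric piece vanishes by $\dive_t v=0$. The Lorentz/cross pairings $\langle A\tilde v,A(b\cdot\nabla_t)b\rangle + \langle Ab,A(b\cdot\nabla_t)\tilde v\rangle$ collapse likewise into commutator form by $\dive_t b = 0$. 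Identifying $v = \nabla_t^\perp\Lambda_t^{-2}\nabla_t^\perp\cdot v$ and similarly for $b$ (Biot--Savart in the moving frame), each commutator matches exactly a term $NL_{f_1,f_2,q}$ of Theorem \ref{thm:main} with $\gamma=\tilde\gamma=0$ and $q\in\{\Lambda_t^{-1}\nabla_t^\perp\cdot v,\Lambda_t^{-1}\nabla_t^\perp\cdot b\}$. The residual pieces produced by the correction $\tilde v - v$ inside the nonlinearity, together with the extra forcing $(1/\alpha_1)\p_x^{-1}((b\cdot\nabla_t)v^y-(v\cdot\nabla_t)b^y)_{\neq}e_1$ in the $\tilde v^x$-equation, assemble into the remaining two summands of $LNL$.

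The main obstacle I expect is the careful bookkeeping around $\dive_t\tilde v = (1/\alpha_1)b^y_{\neq}\neq 0$: every integration by parts that would otherwise close into a coercive or vanishing term now produces a residue proportional to $b^y_{\neq}$, and one must verify that these residues reassemble precisely into the listed $LNL$ and the $\tilde b^y$-piece of $L$, with no leftover. A secondary technical point is to confirm that the four quadratic commutators produced above really fit the template $[A,\nabla^\perp\Lambda_t^{-1}q]\cdot\nabla f_2$ demanded by Theorem \ref{thm:main}; this reduces to the Biot--Savart identification, which is immediate from divergence-freeness of $v$ and $b$ in the moving frame.
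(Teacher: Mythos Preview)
Your outline is correct and is precisely the ``direct calculation'' the paper invokes without detail: differentiate, use $\p_t A/A=c\mu^{1/3}\mathbf 1_{k\neq0}-\p_t m/m$, integrate the dissipation by parts, cancel the $\alpha_1\p_x$ exchange by skew-symmetry, and route every $\nabla_t$-IBP residue through $\dive_t\tilde v=\alpha_1^{-1}b^y_{\neq}$ to produce the $L$ and $LNL$ pieces.

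One bookkeeping point: you propose to decompose the \emph{inner} $v$ as $\tilde v-\alpha_1^{-1}\p_x^{-1}b^y_{\neq}e_1$, which yields commutators with $f_i\in\{\tilde v,b\}$, whereas the stated $\Gamma$ has $f_i\in\{v,b\}$. The cleaner route matching the lemma as written is to split the \emph{outer} factor $A\tilde v=Av+\alpha_1^{-1}A\p_x^{-1}b^y_{\neq}e_1$ instead; then the four commutators $\langle Av,[A,v\cdot\nabla_t]v\rangle$, $\langle Ab,[A,v\cdot\nabla_t]b\rangle$, $\langle Av,[A,b\cdot\nabla_t]b\rangle+\langle Ab,[A,b\cdot\nabla_t]v\rangle$ arise directly with $f_i\in\{v,b\}$, and the residuals land in $\langle A\p_x^{-1}b^y_{\neq},\,\cdot\,\rangle$ as in the listed $LNL$. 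Either splitting works (the bootstrap for $(v,b)$ follows from that for $(\tilde v,\tilde b)$, as the paper notes below the lemma), and the Biot--Savart identity $\nabla_t^\perp\psi\cdot\nabla_t=\nabla^\perp\psi\cdot\nabla$ you cite is exactly what puts the commutators into the template of Theorem~\ref{thm:main} with $\gamma=\tilde\gamma=0$.
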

\begin{proof}
    Lemma \ref{lem:MHDsplit} follows by direct calculations. 
\end{proof}

\textbf{Proof of Proposition \ref{pro:BootMHD}:} By Lemma \ref{lem:MHDsplit} we only need to bound 
\begin{align}
    \int_1^t L d\tau &\le 3c C_2 \eps^2, \label{eq:BLMHD}\\
    \int_1^t NL_f d\tau &\lesssim \delta \eps^2, \label{eq:NLMHD}\\
    \int_1^t LNL d\tau &\lesssim \delta \eps^2. \label{eq:LNLMHD}
\end{align}
By definition of $(\tilde v , \tilde b)$ the bootstrap assumption \eqref{bootMHD} also holds for $(v,b)$ and $(\Lambda_t^{-1}\nabla_t^{\perp}\cdot v,\Lambda_t^{-1}\nabla_t^{\perp}\cdot b)$ with a different constant. By Lemma \ref{lem:M1},  $M_L^{b,1}M_{\min(\nu,\kappa)}$ is admissible to apply Theorem \ref{thm:main} with $\gamma=\tilde \gamma =0$ and $(f,q)\in \Gamma$ and therefore we obtain \eqref{eq:NLMHD}.

\textbf{Linear estimates:} We use the identity $\p_x^{-1} \Delta_t  \tilde b^y_{\neq }= \nabla^\perp_t \cdot \tilde b_{\neq } $ to infer 
\begin{align*}
    \langle A \tilde v^x ,  (\kappa-\nu) \p_x^{-1} \Delta_t A \tilde b^y_{\neq }\rangle&=(\kappa-\nu)\langle A \tilde v^x ,   A\nabla^\perp_t \cdot \tilde b_{\neq }\rangle\\
    &\lesssim \nu \Vert \Lambda_t A\tilde v \Vert _{L^2 }\Vert  A\tilde b_{\neq } \Vert _{L^2 }+\kappa \Vert  A\tilde v_{\neq }  \Vert _{L^2 }\Vert  A\Lambda_t\tilde b_{\neq } \Vert _{L^2 }
\end{align*}
and thus integrating in time, using \eqref{bootMHD}, Lemma \ref{cor:L2} and $\kappa^3 \le \nu \le \kappa^{-\frac 13} $ yields 
\begin{align*}
    \int \langle A \tilde v^x ,  (\kappa-\nu) \p_x^{-1} \Delta_t A \tilde b^y_{\neq }\rangle d\tau \lesssim c\left(\nu^{\frac 1 2 } \kappa^{-\frac 16}+\kappa^{\frac 1 2 } \nu^{-\frac 16}\right) C_2\eps^2 \le 2 c C_2\eps^2 
\end{align*}
For the second term, by definition of $M_L^b$ we estimate 
\begin{align*}
    \tfrac 1{\alpha_1} \langle b^y,2\partial_x \Delta^{-1}_t   \tilde v^y \rangle &\le c \Vert \sqrt{\tfrac {\p_t M_L^{b}}{ M_L^{b}}}(\tilde v,\tilde b ) \Vert_{L^2 }^2 
\end{align*}
and thus after integrating in time 
\begin{align*}
    \int_1^t\tfrac 1 {\alpha_1} \langle b^y,2\partial_x \Delta^{-1}_t   \tilde v^y \rangle d\tau &\le c C_2 \eps^2.
\end{align*}

\textbf{Lower order estimates:} Note that $v^y_==b^y_==0 $, therefore we estimate directly 
\begin{align*}
     LNL_1:&=\frac 1 {\alpha_1}\langle A\tilde v^x_{\neq} , A\p_x^{-1} ((b\cdot \nabla_t) v^y -(v\cdot \nabla_t) b^y)_{\neq}\rangle \\
     &\lesssim \Vert A\tilde v^x_{\neq} \Vert_{L^2 } \left(\Vert v,b\Vert_{H^6}\Vert\p_x^{-1} \Lambda_t A (v^y,b^y) \Vert_{L^2 }+ \Vert A(v,b)\Vert_{L^2}\Vert \Lambda_t   (v^y,b^y) \Vert_{H^6 } \right)\\
    &\lesssim \Vert A(v_{\neq },b_{\neq}) \Vert_{L^2 } \Vert A(v,b) \Vert_{L^2 }\Vert\p_x^{-1} \Lambda_t A (v^y,b^y) \Vert_{L^2 }.
\end{align*}
For $a \in \{v,b\}$ it holds that 
\begin{align*}
    \Lambda_ta^y = \p_x \Lambda^{-1}_t \nabla^\perp_t\cdot  a_{\neq}
\end{align*}
and therefore 
\begin{align*}
    \Vert\p_x^{-1} \Lambda_t A (v^y,b^y) \Vert_{L^2 }\lesssim \Vert  A (v_{\neq },b_{\neq }) \Vert_{L^2 }.
\end{align*}
After integrating in time, we estimate 
\begin{align*}
    \int_1^t LNL_1 d\tau &\lesssim \Vert A(v_{\neq },b_{\neq}) \Vert_{L^2 L^2 }^2 \Vert A(v,b) \Vert_{L^\infty L^2 }\lesssim \mu^{-\frac 13 } \eps^3 \le \delta \eps^2.
\end{align*}
Similar we estimate 
\begin{align*}
    LNL_2:&=\tfrac 1 {\alpha_1}\langle A\p_x^{-1} b_{\neq}^y, A((b\cdot \nabla_t) v-(v\cdot \nabla_t) b)\rangle =\tfrac 1 {\alpha_1}\langle A\p_x^{-1} \nabla_t b_{\neq}^y, A((b  v^x-v b^x)_{\neq}\rangle  \\
    &\lesssim \Vert  A\p_x^{-1} \nabla_t b_{\neq}^y\Vert_{L^2 }  \Vert  A(v,b)\Vert_{L^2}\Vert  A(v_{\neq},b_{\neq})\Vert_{L^2}\lesssim \Vert  A b_{\neq}\Vert_{L^2 }  \Vert  A(v,b)\Vert_{L^2}\Vert  A(v_{\neq},b_{\neq})\Vert_{L^2}
\end{align*}
and so 
\begin{align*}
    \int_1^t LNL_2 d\tau &\lesssim  \delta \eps^2.
\end{align*}

Since $\pi =\Delta^{-1}_t \nabla_t   ((b\cdot \nabla_t) b- (v\cdot \nabla_t) v)=\Delta^{-1}_t (\nabla_t\otimes \nabla_t) : (b\otimes  b- v\otimes  v) $ we infer 
\begin{align*}
     LNL_3:&=\frac 1 \alpha_1  \langle A b^y_{\neq},A \pi_{\neq}\rangle= \frac 1 \alpha_1  \langle A \Lambda^{-1}_t \nabla^\perp _t b_{\neq},\p_x \Lambda^{-1}_t A \pi_{\neq}\rangle\lesssim \Vert A(v_{\neq},b_{\neq}) \Vert_{L^2}^2 \Vert A(v,b) \Vert_{L^2}.
\end{align*}
Therefore, after integrating in time 
    \begin{align*}
    \int_1^t LNL_3 d\tau &\lesssim \mu^{-\frac 13 } \eps^3\le  \delta \eps^2. 
\end{align*}
Since $LNL=LNL_1+LNL_2+LNL_3$ this yields the estimate \eqref{eq:LNLMHD}. Then Proposition \ref{pro:BootMHD} follows and so Theorem \ref{thm:MHD} for the case of horizontal constant magnetic field.

\subsection{Constant Magnetic Field with Vertical Component}
If $\alpha_2 \neq 0$,  we prove the stability of
\begin{align*}
    \partial_t v + v^y e_1 - 2\partial_x \Delta^{-1}_t  \nabla_t v^y  &=  \nu  \Delta_t v+ (\alpha \cdot \nabla_t)  b  + (b\cdot \nabla_t) b- (v\cdot \nabla_t) v-\nabla_t \pi , \\
    \partial_t b - b^y e_1 \qquad \qquad \quad \quad  \ &= \kappa \Delta_t   b+ (\alpha \cdot \nabla_t)  v  +(b\cdot \nabla_t) v -(v\cdot \nabla_t) b,\\
    \dive_t(v)=\dive_t(b) &= 0.
\end{align*}
Before proving stability, we define unknowns that adapt to the linear dynamics,
\begin{align*}
    \tilde v &= v + \tfrac {\alpha \cdot \nabla_t} {\langle \alpha \cdot \nabla_t \rangle^2 } b^y_{\neq} e_1, &
    \tilde b&= b.
\end{align*}
For the adapted velocity, this yields the equations
\begin{align*}
    \p_t \tilde v &= \p_t v + \p_t (\tfrac {\alpha \cdot \nabla_t} {\langle \alpha \cdot \nabla_t \rangle^2 } )b^y_{\neq} e_1 +\tfrac {\alpha \cdot \nabla_t} {\langle \alpha \cdot \nabla_t \rangle^2 } \p_t b^y_{\neq} e_1\\
    &=-v^y e_1 + 2\partial_x \Delta^{-1}_t  \nabla_t v^y + \nu  \Delta_t v+ (\alpha \cdot \nabla_t)  b  + (b\cdot \nabla_t) b- (v\cdot \nabla_t) v-\nabla_t \pi\\
&\quad -\alpha_2\p_x \tfrac {1-(\alpha\cdot  \nabla_t)^2}{\langle \alpha\cdot\nabla_t  \rangle^4}b^y e_1+ \tfrac {\alpha \cdot \nabla_t} {\langle \alpha \cdot \nabla_t \rangle^2 }e_1(\kappa \Delta_t   b^y+ (\alpha \cdot \nabla_t)  v^y  +(b\cdot \nabla_t) v^y -(v\cdot \nabla_t) b^y)_{\neq}\\
&=\nu\Delta_t \tilde v+ (\kappa-\nu)    \tfrac {\alpha \cdot \nabla_t} {\langle \alpha \cdot \nabla_t \rangle^2 } \Delta_t  \tilde b^y_{\neq } e_1 + (\alpha \cdot \nabla_t)  \tilde b+  2\partial_x \Delta^{-1}_t  \nabla_t \tilde v^y- \tfrac {1} {\langle \alpha \cdot \nabla_t \rangle^2 }\tilde v^y e_1\\
&\quad -\alpha_2\p_x \tfrac {1-(\alpha\cdot  \nabla_t)^2}{\langle \alpha\cdot\nabla_t \rangle^4}\tilde b^y  e_1+ (b\cdot \nabla_t) b- (v\cdot \nabla_t) v-\nabla_t \pi+ e_1  \tfrac {\alpha \cdot \nabla_t} {\langle \alpha \cdot \nabla_t \rangle^2 } ((b\cdot \nabla_t) v^y -(v\cdot \nabla_t) b^y)_{\neq}, 
\end{align*}
where we used that  $\p_t (\tfrac {\alpha \cdot \nabla_t} {\langle \alpha \cdot \nabla_t \rangle^2 } )=-\alpha_2\p_x \tfrac {1-(\alpha\cdot  \nabla_t)^2}{\langle \alpha\cdot\nabla_t \rangle^4} $ and $1-\frac{(\alpha \cdot \nabla_t)^2 }{\langle \alpha \cdot \nabla_t \rangle^2 }=\frac{1 }{\langle \alpha \cdot \nabla_t \rangle^2 }$. The magnetic field satisfies the equation
\begin{align*}
    \p_t \tilde b &= b^y e_1 + \kappa \Delta_t   b+ (\alpha \cdot \nabla_t)  (\tilde v - \tfrac {\alpha \cdot \nabla_t} {\langle \alpha \cdot \nabla_t \rangle^2 } b^y_{\neq} e_1) +(b\cdot \nabla_t) v -(v\cdot \nabla_t) b\\
    &=\kappa\Delta_t \tilde b + (\alpha \cdot \nabla_t) \tilde  v -\tfrac {1} {\langle \alpha \cdot \nabla_t \rangle^2 }b^ye_1+(b\cdot \nabla_t) v -(v\cdot \nabla_t) b.
\end{align*}

Therefore, \eqref{eq:vb} changes to the equations 
 \begin{align}\begin{split}
     \p_t \tilde v  &= \nu\Delta_t \tilde v+ (\kappa-\nu)    \tfrac {\alpha \cdot \nabla_t} {\langle \alpha \cdot \nabla_t \rangle^2 } \Delta_t  \tilde b^y e_1 + (\alpha \cdot \nabla_t)  \tilde b+  2\partial_x \Delta^{-1}_t  \nabla_t \tilde v^y- \tfrac {1} {\langle \alpha \cdot \nabla_t \rangle^2 }\tilde v^y e_1\\
     &\quad-\alpha_2\p_x \tfrac {1-(\alpha\cdot  \nabla_t)^2}{\langle \alpha\cdot \nabla_t \rangle^4}\tilde b^y e_1+ (b\cdot \nabla_t) b- (v\cdot \nabla_t) v-\nabla_t \pi+ e_1  \tfrac {\alpha \cdot \nabla_t} {\langle \alpha \cdot \nabla_t \rangle^2 } ((b\cdot \nabla_t) v^y -(v\cdot \nabla_t) b^y)_{\neq} ,\\
     \p_t \tilde b &=\kappa\Delta_t \tilde b + (\alpha \cdot \nabla_t) \tilde  v + \tfrac {1} {\langle \alpha \cdot \nabla_t \rangle^2 }\tilde b^ye_1+(b\cdot \nabla_t) v -(v\cdot \nabla_t) b,\\
     &\dive_t( \tilde v) =\p_x \tfrac {\alpha \cdot \nabla_t} {\langle \alpha \cdot \nabla_t \rangle^2 } \tilde b^y , \quad \dive_t ( \tilde b) =0. \label{eq:Lintvb2}
\end{split} \end{align}
 Let $c_2 =\frac1{10  \max ( 1 , \alpha_2 )}$ and we use the linear weight 
\begin{align*}\begin{split}
    M_\alpha (k,\eta)  &= M_{L, \alpha,c_2} (k,\eta)\cdot M_{L, (0,1),c_2} (k,\eta)
\end{split}\end{align*}
i.e. two times the weight \eqref{Malpha} with $d=\alpha$ and $d=(0,1)$. By Lemma \ref{lem:Mb}, this weight is $\gamma=0$-admissible.  For $\mu=\min(\nu,\kappa)$ we define the weights
\begin{align*}
    A(k,\eta )&= \langle k ,\eta  \rangle^{N} e^{ c \mu^{\frac 1 3}t\textbf{1}_{k\neq 0}}m^{-1}(k,\eta),\\
    m(k,\eta )&=  m_0(k,\eta ) M_\alpha(k,\eta) M_\nu(k,\eta )M_\kappa(k,\eta ). 
\end{align*}
The proof of stability is analogous to the previous stability results. For $t>1$ we write the bootstrap hypothesis 
\begin{align}
\begin{split}\label{bootMHD2}
   \Vert  A (\tilde v,\tilde b) \Vert^2_{L^2}  +\int_1^t \Vert \Lambda_t A   (\sqrt\nu\tilde v,\sqrt\kappa \tilde b)\Vert^2_{L^2} +\Vert \sqrt {\tfrac {\p_t m }m}  (\tilde v,\tilde b)  \Vert_{L^2}^2 d\tau&\le C_2\eps^2.
\end{split}
\end{align}
From that, we can establish the equivalent of Lemma \ref{lem:MHDsplit} and Proposition \ref{pro:BootMHD}. Here we see that the main nonlinear term is the same as in  Lemma \ref{lem:MHDsplit} and the lower nonlinearities are simpler to estimate than the one in  Lemma \ref{lem:MHDsplit} since $\tfrac {\alpha \cdot \nabla_t} {\langle \alpha \cdot \nabla_t \rangle^2 }$ introduces additional decay. To omit repeating steps, we only estimate the linear terms. For the linear terms, we obtain  
\begin{align*}
    L &\le c\mu^{\frac 13 } \Vert A(\tilde v_{\neq}, \tilde b_{\neq} )  \Vert_{L^2}  +\vert \kappa-\nu\vert\vert  \langle A\tilde v,   \tfrac {\alpha \cdot \nabla_t} {\langle \alpha \cdot \nabla_t \rangle^2 } \Delta_t  b^y_{\neq }\rangle\vert  + \vert  \langle A(\tilde v^x, \tilde b^x), \tfrac {1} {\langle \alpha \cdot \nabla_t \rangle^2 }(\tilde v^y, \tilde b^y)\rangle \vert \\
    &+2 \vert \langle A\p_x  \tfrac {\alpha \cdot \nabla_t} {\langle \alpha \cdot \nabla_t \rangle^2 } b^y_{\neq}, \partial_x \Delta^{-1}_t  A \tilde v^y\rangle\vert +\alpha_2 \vert \langle A  \tilde v^x  , \p_x \tfrac {(\alpha\cdot  \nabla_t)^2-1}{\langle \alpha\cdot\nabla_t  \rangle^4} Ab^y \rangle \vert  \\
    &= L_1 +L_2 +L_3 +L_4 +L_5.
\end{align*}
The $L_1$ and $L_4$ can be estimated as in the previous subsection and we omit the details. We estimate the $L_2$ term by 
\begin{align*}
    L_2 &=\vert \kappa-\nu\vert \vert \langle A\tilde v,   \tfrac {\alpha \cdot \nabla_t} {\langle \alpha \cdot \nabla_t \rangle^2 } \Delta_t  b^y_{\neq }\rangle\vert\le(\kappa+\nu)\vert \langle A\p_x \tfrac {\alpha \cdot \nabla_t} {\langle \alpha \cdot \nabla_t \rangle^2 } \tilde v,  \nabla_t^\perp\cdot  b_{\neq }\rangle\vert \\
 &\le\kappa\Vert \sqrt {\tfrac {\p_t M_\alpha }{M_\alpha }} A\tilde v\Vert_{L^2}\Vert  A\nabla_t \tilde b\Vert_{L^2} +\nu\Vert \sqrt {\tfrac {\p_t M_\alpha }{M_\alpha }} A\tilde b\Vert_{L^2}\Vert  A\nabla_t \tilde v\Vert_{L^2}
\end{align*}
and therefore after integrating in time
\begin{align*}
    \int_1^t L_2 d \tau \lesssim \sqrt {c_2}(\kappa^{\frac 12} +\nu^{\frac 1 2 })C_2 \eps^2.
\end{align*}
For $L_3$, we estimate 
\begin{align*}
    L_3 
    &=\vert  \langle A(\tilde v^x, \tilde b^x), \tfrac {1} {\langle \alpha \cdot \nabla_t \rangle^2 }(\tilde v^y, \tilde b^y)\rangle \vert \le c_2 \Vert \sqrt {\tfrac {\p_t M_\alpha }{M_\alpha }} A( \tilde v,\tilde b )\Vert_{L^2}^2.
\end{align*}
Therefore, after integrating in time 
\begin{align*}
    \int_1^t  L_3 d\tau  &\le c_2 C_1 \eps^2 .
\end{align*}
For $L_5$ we obtain
\begin{align*}
   L_5 =\vert\langle A  \tilde v^x  , \alpha_2\p_x \tfrac {(\alpha\cdot  \nabla_t)^2-1}{\langle \alpha\cdot\nabla_t  \rangle^4} Ab^y \rangle \vert
    &\le 2 c_1 \alpha_2 \Vert  A \sqrt{\tfrac{\p_t M_\alpha }{M_\alpha}}  \tilde v\Vert_{L^2 }  \Vert  A \sqrt{\tfrac{\p_t M_\alpha }{M_\alpha}} \tilde b \Vert_{L^2}.
\end{align*}
After integrating in time we infer
\begin{align*}
    \int_1^tL_5 d\tau \le \alpha_2 c_2 C_1\eps^2.
\end{align*}
Thus, since $c_2$ is small enough, we obtain linear estimates with an admissible weight. The main nonlinear and lower nonlinear terms work the same as in the case of constant magnetic field which is enough to establish the equivalent Proposition \ref{pro:BootMHD} for equation \eqref{eq:Lintvb2}. This yields Theorem \ref{thm:MHD} and then finally Theorem \ref{thm:thres}.

\subsection*{Data availability}
No data was used for the research described in the article.

\subsection*{Acknowledgments}

N. Knobel is supported by ERC/EPSRC Horizon Europe Guarantee EP/X020886/1. The author declares that he has no conflict of interest.

The author wants to thank Michele Coti Zelati for his useful comments on the introduction.

\appendix
\section{The Navier-Stokes Threshold}\label{sec:NS}
In this section, we use Theorem \ref{thm:main} to prove the threshold for Navier-Stokes equations in Theorem \ref{thm:thres}. The Navier-Stokes equation in velocity formulation states 
\begin{align*}\begin{split}
    \partial_t V +V\cdot\nabla V  + \nabla \Pi  &= \nu \Delta V,\\
    \dive (V) &= 0.
\end{split}\end{align*}
For the  perturbative unknown $v(t,x,y) =V(t,x+yt, y ) - V_s(x,y)$ we obtain 
\begin{align*}
\begin{split}
    \partial_t v+ e_1 v^y   + (v \cdot\nabla_t)v  + \nabla_t \pi  &= \nu \Delta_t v, \\
    \dive_t (v) &= 0.
\end{split}
\end{align*}
which yields for the vorticity $w = \nabla^\perp_t\cdot  v $ the equation

\begin{align}\begin{split}
    \partial_t w  + (\overline v \cdot\nabla) w &= \nu \Delta_t w,\\
    \overline v &=\Delta_t^{-1}\nabla^{\perp}  w,\label{eq:NS}
\end{split}\end{align}
For this equation, we obtain the following stability:

\begin{theorem}[Navier-Stokes part of Theorem \ref{thm:thres}]\label{thm:NS}
    Let $N\ge 12$, then there exist $c, \delta >0$ such that if the initial data satisfies 
    \begin{align*}
        \Vert w_{in}\Vert_{H^N}= \eps \le \delta \nu^{\frac 13 } 
    \end{align*}
    then the global in time solution $w$ to \eqref{eq:NS}  is in $ C_t H^N$ and satisfies the following:\\
     \textbf{Stability of the vorticity}
        \begin{align*}
        \sup_{t\ge 0}\Vert w \Vert_{H^{N} } &\lesssim \eps .
    \end{align*}    
     \textbf{Inviscid damping and Enhanced dissipation estimates }
    \begin{align*}
        \Vert w_{\neq}  \Vert_{H^{N }}+\langle t\rangle^{\frac 12 }\Vert v^x_{\neq} \Vert_{H^{N-\frac 1 2  }}+ \langle t\rangle^{\frac 32 }\Vert v^y \Vert_{H^{N-\frac 3 2  }}&\lesssim \eps  e^{-c\nu ^{\frac 1 3 } t }.
    \end{align*}  
\end{theorem}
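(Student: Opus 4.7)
The plan is to adapt the three-step strategy already used for the Boussinesq and MHD cases in Sections~\ref{sec:estbou} and \ref{sec:estmhd}, which is considerably simpler here since there is no second variable $\Phi$ and hence no linear coupling between different fields. The natural tailored unknown is simply the vorticity $w$ itself: in the framework of Theorem~\ref{thm:main} we are in the case $\gamma=1$, $\tilde\gamma=0$, with $q=f_1=f_2=w$, because $\bar v = \Delta_t^{-1}\nabla_t^\perp w = \nabla_t^\perp \Lambda_t^{-2} w$ is exactly of the form $\nabla^\perp \Lambda_t^{-(1+\gamma)} q$ for $\gamma=1$. I would therefore define
\begin{align*}
A(k,\eta) &= \langle k,\eta\rangle^N e^{c\nu^{1/3} t \mathbf{1}_{k\neq 0}} m^{-1}(k,\eta),\\
m(k,\eta) &= m_1(k,\eta)\, M_\nu(k,\eta),
\end{align*}
taking the trivial choice $M_L\equiv 1$ (which is clearly $\gamma=1$-admissible in the sense of Definition~\ref{def:admiss}), and the energy $\calE(t) = \tfrac12 \Vert A w\Vert_{L^2}^2$.

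Next I would differentiate $\calE$ along \eqref{eq:NS}. Using that $\bar v$ is divergence free so that $\langle Aw, A(\bar v\cdot\nabla w)\rangle = \langle Aw,[A,\bar v]\cdot\nabla w\rangle$, one obtains the identity
\begin{align*}
\p_t \calE + \nu \Vert \nabla_t A w\Vert_{L^2}^2 + \Vert \sqrt{\tfrac{\p_t m}{m}} Aw\Vert_{L^2}^2 &= c\nu^{\tfrac13}\Vert A w_{\neq}\Vert_{L^2}^2 + NL^1_{w,w,w},
\end{align*}
with $NL^1_{w,w,w}$ as in Theorem~\ref{thm:main}. The linear term is absorbed as usual by Lemma~\ref{lem:M2}(ii), which bounds $\nu^{1/3}\Vert Aw_{\neq}\Vert^2$ by a small multiple of the dissipation and of $\Vert\sqrt{\p_t M_\nu/M_\nu}\, Aw\Vert^2$. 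There is no lower-order nonlinear remainder to control, since the commutator structure is exact.

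Then I would set up the bootstrap: assume that on $[1,t^\ast]$
\begin{align*}
\calE(t) + c_1 \int_1^t \nu\Vert\nabla_t A w\Vert_{L^2}^2 + \Vert\sqrt{\tfrac{\p_t m}{m}} Aw\Vert_{L^2}^2\, d\tau \le C_2 \eps^2,
\end{align*}
apply Theorem~\ref{thm:main} with $\gamma=1$, $\tilde\gamma=0$ to obtain $\int_1^t NL^1_{w,w,w}\, d\tau \le \tilde C_1 \nu^{-1/3}\eps^3$, and combine with the linear estimate above to get a bound $\calE(t)+\dots \le (C_1 + \tfrac12 C_2 + C_3 \delta)\eps^2$ under $\eps\le \delta \nu^{1/3}$. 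Choosing $\delta$ small closes the bootstrap by the same continuity argument used in the proof of Theorem~\ref{thm:Bouss}. The asymptotic bounds stated in Theorem~\ref{thm:NS} then follow from $\Vert Aw_{\neq}\Vert_{L^2}\approx e^{c\nu^{1/3}t}\Vert w_{\neq}\Vert_{H^N}$ together with the standard inviscid-damping identities $v^x_{\neq} = -\p_y^t \Lambda_t^{-2} w$, $v^y = \p_x \Lambda_t^{-2} w$, giving the $\langle t\rangle^{-1/2}$ and $\langle t\rangle^{-3/2}$ decay once a uniform $H^N$ bound on $w$ is in hand.

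The main obstacle is not really analytic here — Theorem~\ref{thm:main} does all of the delicate work involving echoes and resonances — but organizational: one must verify that the trivial choice $M_L\equiv 1$ is admissible (immediate), that the exact commutator identity above holds (immediate from $\dive_t\bar v=0$), and that local well-posedness in $\dot H^1\cap \dot H^N$ yields a solution valid until the bootstrap norm is reached. Since $\gamma=1$, no logarithmic correction appears in the threshold, recovering the sharp $\nu^{1/3}$ of \cite{masmoudi2022stability,wei2023nonlinear}.
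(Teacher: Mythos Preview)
Your proposal is correct and follows essentially the same route as the paper's proof in Section~\ref{sec:NS}: the paper also takes $M_L\equiv 1$, $m=m_1 M_\nu$, sets up the bootstrap for $\Vert Aw\Vert_{L^2}^2$, reduces the nonlinear term to the commutator $\langle Aw,[A,\overline v]\cdot\nabla w\rangle$ via $\dive_t\overline v=0$, and closes by invoking Theorem~\ref{thm:main} with $\gamma=1$, $\tilde\gamma=0$. The only cosmetic difference is that the paper absorbs $c\nu^{1/3}\Vert Aw_{\neq}\Vert_{L^2}^2$ directly via Lemma~\ref{lem:M2}(ii) before integrating, exactly as you indicate.
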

 
\begin{proof} The proof is a direct application of Theorem \ref{thm:main} for $\gamma=1$ and $\tilde \gamma =0$. We use the time-dependent Fourier multiplier 
    \begin{align*}
    A(k,\eta )&= \langle k ,\eta  \rangle^{N} e^{ c \mu^{\frac 1 3}t\textbf{1}_{k\neq 0}}m^{-1}(k,\eta),\\
    m(k,\eta )&=  m_1(k,\eta )  M_\nu(k,\eta ).
\end{align*}
This corresponds to the linear weight $M_L=1$, which is admissible in sense of Definition \ref{def:admiss}. By a classical energy argument, obtain $\Vert  A w \Vert^2_{  L^2}\le C_1 \eps^2 $ for some $C_1>0$ and $t\in [0,1]$. We prove by bootstrap that for $t>1$, it holds that
    \begin{align}
        \Vert  A w \Vert^2_{  L^2}  +c_1\int_1^t \nu \Vert \nabla_t A  w\Vert^2_{L^2} +\Vert \sqrt {\tfrac {\p_t m }m} A  w\Vert_{L^2}^2 d\tau&\le C_2\eps^2, \label{bootw}
    \end{align}
    for some $C_1>2(1+\frac {\Vert A w|_{t=1}\Vert_{L^2}}{\Vert w_{in}\Vert_{H^N}})$. For the sake of contradiction, we assume that there exists a maximal time  $T$ such that the bootstrap assumption \eqref{bootw} holds. We estimate 
    \begin{align*}
        \frac12 \p_t \Vert  A w \Vert^2_{ L^2}  &= \langle A w, (\p_t A) w\rangle + \nu\langle A w,  A\Delta_t w \rangle - \langle Aw , A((\overline v\cdot  \nabla) w )\rangle 
    \end{align*}
    and by Lemma \ref{lem:M2}  we obtain 
    \begin{align*}
        \langle A w, \p_t A w\rangle &\le c\nu^{\frac 13 } \Vert  A w_{\neq} \Vert_{L^2}^2- \Vert \sqrt {\tfrac {\p_t m }m} A  w\Vert_{L^2}^2\le -\frac 12 \Vert \sqrt {\tfrac {\p_t m }m} A  w\Vert_{L^2}^2+ \frac 12 \nu \Vert \nabla_t A  w\Vert^2_{L^2}.
    \end{align*} 
    By partial integration we obtain $\langle Aw , (\overline v\cdot \nabla) A w )\rangle =0$ and so 
\begin{align*}
    \langle Aw , A((\overline v\cdot\nabla ) w )\rangle &=\langle Aw , [A,\overline v ] \cdot \nabla w )\rangle .
\end{align*}
    Therefore, we estimate 
     \begin{align*}
    \Vert  A w \Vert^2_{L^\infty L^2}  &+ \int_1^t \nu \Vert \nabla_t A  w\Vert^2_{L^2} +\Vert \sqrt {\tfrac {\p_t m }m} A  w\Vert_{L^2}^2 d\tau\\
    & \le \Vert A w_{in}\Vert_{L^2}^2  +2\int_0^t \langle A w , [A, \nabla^\perp \Lambda^{-2}_t w]\cdot \nabla w \rangle d\tau .
    \end{align*}
 By Theorem \ref{thm:main} we obtain the estimate
    \begin{align*}
        \int_1^t \vert \langle A w , [A, \nabla^\perp \Lambda^{-2}_t w]\cdot \nabla w \rangle\vert  \le  C_3 \delta \eps^2. 
    \end{align*}
    for some $ C_3>0$. Therefore, for $\delta$ small enough, estimate \eqref{bootw} holds with $<$. By local existence, this contradicts the maximality. Thus, we infer \eqref{bootw} holds globally in time. The inviscid damping and enhanced dissipation estimates are a consequence of the definition of $A$ and $v$.    
\end{proof}

\section{Estimates for the Linear Weights}\label{sec:linweight}
\begin{proof}[Proof of Lemma \ref{lem:M2} ] The properties i) and ii) hold by definition. Property iii) can be obtained by applying the mean value theorem and using that $\p_\eta M_\mu =-\frac 1k \p_t  M_\mu$. To prove iv) we consider the case when $k,l\neq 0$, since $M(t,0,\eta)=M(t,1,\eta) $. We use $\vert e^{\vert x\vert}-1\vert\le \vert x\vert e^{\vert x\vert}$ to estimate 
\begin{align*}
    \vert M_\mu (k,\eta) -M_\mu(l,\xi)\vert &\lesssim \left\vert  \exp\left(\int_0^t \tfrac {\mu^{\frac 1 3 }}{1+ \mu^{\frac 2 3 }\vert \tau -\frac \eta k \vert^{2} }-\tfrac {\mu^{\frac 1 3 }}{1+ \mu^{\frac 2 3 }\vert \tau -\frac \xi l \vert^{2} }d\tau \right)-1\right\vert\\
    &\lesssim  \mu^{\frac 13}\left\vert\int_0^t \tfrac {1}{1+ \mu^{\frac 2 3 }\vert \tau-\frac \eta k \vert^{2} }-\tfrac {1}{1+ \mu^{\frac 2 3 }\vert \tau-\frac \xi l \vert^{2} }d\tau \right\vert .
\end{align*}
We distinguish between $\vert \frac \xi l\vert  \le 2 t  $ and $\vert \frac \xi l\vert  \ge 2 t  $ cases. Let $\vert \frac \xi l \vert \ge 2 t  $, then we obtain $\vert l,\xi\vert \le 2 \vert l,\xi-lt \vert$.
Furthermore we estimate 
\begin{align*}
    \left\vert\int_0^t \tfrac {1}{1+ \mu^{\frac 2 3 }\vert \tau-\frac \eta k \vert^{2} }-\tfrac {1}{1+ \mu^{\frac 2 3 }\vert \tau-\frac \xi l \vert^{2} }d\tau \right\vert \le t .
\end{align*}
Combining these estimates, we infer
\begin{align*}
    \frac {\vert l,\xi\vert }t\vert M(k,\eta)-M(l,\xi )  \vert\le 2\mu^{\frac 1 3 }\vert l,\xi-lt \vert. 
\end{align*}

For the case $\vert \frac \xi l\vert  \le 2t $ we write 
\begin{align*}
    \int_0^t \tfrac {1}{1+ \mu^{\frac 2 3 }\vert \tau-\frac \eta k \vert^{2} }-\tfrac {1}{1+ \mu^{\frac 2 3 }\vert \tau-\frac \xi l \vert^{2} }d\tau&= \int_{-\frac \eta k }^{t-\frac \eta k }\tfrac {1}{1+ \mu^{\frac 2 3 }s^{2} }ds-\int_{-\frac \xi l }^{t-\frac \xi l }\tfrac {1}{1+ \mu^{\frac 2 3 }s^{2} }ds\\
&=\int_{t-\frac \xi l  }^{t-\frac \eta k }\tfrac {1}{1+ \mu^{\frac 2 3 }s^{2} }ds-\int_{-\frac \eta k }^{t-\frac \xi l }\tfrac {1}{1+ \mu^{\frac 2 3 }s^{2} }ds.
\end{align*}

Therefore, we estimate 
\begin{align*}
    \left \vert\int_0^t \tfrac {1}{1+ \mu^{\frac 2 3 }\vert \tau-\frac \eta k \vert^{2} }-\tfrac {1}{1+ \mu^{\frac 2 3 }\vert \tau-\frac \xi l \vert^{2} }d\tau \right \vert &\le 2 \vert \frac \eta k -\frac \xi l\vert \le 2 \frac{\xi}{l^2}\langle \eta-\xi,k-l\rangle^2  
\end{align*}
and so we obtain 
\begin{align*}
    \frac {\vert l,\xi\vert }t\vert M(k,\eta)-M(l,\xi )  \vert &\lesssim   \mu^{\frac 13 } \frac{\vert l, \xi\vert^2 }{l^2 t }\langle \eta-\xi,k-l\rangle^2 \lesssim   \mu^{\frac 13 } t \langle \eta-\xi,k-l\rangle^2. 
\end{align*}

\end{proof}

\begin{proof}[Proof of Lemma \ref{lem:M1}]
The properties i) and ii) are a direct consequence of the definition of $M_L^\theta $. To prove iii), we use that 
\begin{align*}
    \vert M_L^\theta (k,\eta) - M_L^\theta (l,\xi)\vert\le \vert M_L^\theta (k,\eta) - M_L^\theta (k,\xi)\vert+\vert M_L^\theta (k,\xi) - M_L^\theta (l,\xi)\vert
\end{align*}
and estimate them separately. For  $\vert M_L^\theta (k,\eta) - M_L^\theta (k,\xi)\vert$, by mean value theorem for Lipschitz functions we infer 
\begin{align*}
    \vert M_L^\theta (k,\eta) - M_L^\theta (k,\xi)\vert &\le \vert \p_\eta M_L^\theta(k,\tilde \eta) \vert  \vert \eta-\xi\vert . 
\end{align*}
for some $\tilde \eta \in [\xi,\eta] $. We estimate 
\begin{align*}
    \p_\eta M_L^\theta(k,\tilde \eta)&\le \frac 1 k \left(c^3 \mu^{\frac 12 } +{\langle t-\tfrac {\tilde \eta } k\rangle^{-\frac 32 }}\right) M_L^\theta(k,\tilde \eta)
\end{align*}
in the Lipschitz sense. Thus we obtain 
\begin{align*}
     \tfrac {\vert l,\xi\vert}{t^{\frac 32}}\vert M_L^\theta (k,\eta) - M_L^\theta (k,\xi)\vert
     &\lesssim \tfrac {\vert l,\xi\vert}{t^{\frac 32}k}(\mu^{\frac 12 }+ {\langle t-\tfrac {\tilde \eta } k\rangle^{-\frac 32 }})\vert \eta-\xi\vert \\
     &\lesssim( t^{-\frac 32 } \mu^{\frac 12 }\vert l,\xi-lt \vert + t^{-\frac 12 } )\langle k-l, \eta-\xi\rangle^2  . 
\end{align*}

For $\vert M_L^\theta (k,\xi) - M_L^\theta (l,\xi)\vert $, we first show that if $M_L^\theta(k,\xi) -M_L^\theta(l,\xi) \neq 0$, then it holds that 
\begin{align}
    \vert \tfrac \xi l \vert \lesssim  ( t + \mu^{-\frac 1 6} )\vert k-l\vert\label{eq:T31add}
\end{align} 

If  $\min(\vert t-\frac \xi k \vert, \vert t-\frac \xi l \vert) \le 2c^{-1} \mu^{-\frac 1 6} $, then \eqref{eq:T31add} holds. Now let $\min(\vert t-\frac \xi k \vert, \vert t-\frac \xi l \vert) \ge 2c^{-1} \mu^{-\frac 1 6} $, then it holds $M_L^\theta (k,\xi ) - M_L^\theta(l,\xi)=0$ if either $t-\frac \xi k,t-\frac \xi l\le -2c^{-1}\mu^{-\frac 1 6}$ or  $t-\frac \xi k,t-\frac \xi l\ge \mu^{-\frac 1 6}$ or $k=l$. For the case  $t-\frac \xi k  \le \pm 2c^{-1}\mu^{-\frac 1 6} \le  t-\frac \xi l $ there exists a $k'\in (k,l)$ such that $t-\frac \xi {k'}=0$ and we infer \eqref{eq:T31add} since all caves are covered. 

To estimate $\vert M_L^\theta (k,\xi) - M_L^\theta (l,\xi)\vert$, by definition it is sufficient to consider $k,l\neq 0$ and since we can replace $(l,\xi)$ and $(k,\eta)$ it is sufficient to cosider the case ${ \langle t-\frac \xi k \rangle }\ge { \langle t-\frac \xi l \rangle }$. Since $M_1$ is bounded from below and above and using  $\vert e^x-1\vert\le \vert x\vert e^{\vert x\vert }$, we infer 
\begin{align*}
    \vert M_L^\theta (k,\xi) - M_L^\theta (l,\xi)\vert
    &\lesssim \left \vert\int_{-\infty}^t \textbf{1}_ {\vert \tau- \frac \eta k \vert\le 2\mu^{-\frac 1 6 }c^{-1}} \tfrac 1 {\langle\frac{\eta}{k}- \tau\rangle^{3 }} d\tau-\int_{-\infty}^t \textbf{1}_ {\vert \tau- \frac \xi k \vert\le 2\mu^{-\frac 1 6 }c^{-1}} \tfrac 1 {\langle\frac{\xi}{k}- \tau\rangle^{3 }} d\tau \right\vert \\
    &\le \left \vert \frac {t-\frac \xi k }{\langle t-\frac \xi k \rangle }- \frac {t-\frac \xi l }{\langle t -\frac \xi l \rangle }  \right\vert=\left \vert \frac {(t-\frac \xi k)\langle t -\frac \xi l \rangle- \langle t-\frac \xi k \rangle (t-\frac \xi l) }{\langle t-\frac \xi k \rangle\langle t -\frac \xi l \rangle }\right\vert\\
    &\lesssim 2 \left \vert \frac {(t-\frac \xi k)-(t-\frac \xi l) }{\langle t-\frac \xi k \rangle }\right\vert=  \vert \xi \vert \left\vert\frac 1 l-\frac 1 k \right\vert \frac 1{\langle t-\frac \xi k \rangle }= \left \vert \frac  \xi{lk } \right\vert \left\vert k-l \right\vert \frac 1{\langle t-\frac \xi k \rangle }. 
\end{align*}

Therefore, by using \eqref{eq:T31add} and Lemma \ref{lem:m1} we estimate 
\begin{align*}
    \vert M_L^\theta(k,\xi)&-M_L^\theta(l,\xi)\vert  \tfrac {\vert l,\xi\vert } {t^{\frac 32 }}\le \vert {k}-l \vert\vert  \tfrac {\xi }{ {k}  {l}}\vert  \tfrac 1{ \langle t-\frac \xi { k} \rangle }  \tfrac {\vert l,\xi\vert } {t^{\frac 32 }} \\
    &\lesssim  (\sqrt t+  \mu^{-\frac 13}t^{-\frac 32 } )\frac 1{\langle t-\frac \xi {k} \rangle }\vert k-l \vert^2\\
    &\lesssim  \left(t\frac 1{\langle t-\frac \xi {k} \rangle^{\frac 32}  }+  \langle \mu^{-\frac 13}t^{-\frac 32 }\rangle \right)\vert k-l \vert^2\\
    &\le \left(t\tfrac1{(\min(1,\mu^{\frac13 }t ))}\sqrt{\frac{\p_t m_{\frac1 2 }}{m_{\frac1 2 }} (k,\eta)}\sqrt{\frac{\p_t m_{\frac1 2 }}{m_{\frac1 2 }} (l,\xi)}+  \langle \mu^{-\frac 13}t^{-\frac 32 }\rangle \right)\vert k-l,\eta-\xi\vert^5 \\
    \\
    &\le \left((t+\mu^{-\frac 13 })\sqrt{\frac{\p_t m_{\frac1 2 }}{m_{\frac1 2 }} (k,\eta)}\sqrt{\frac{\p_t m_{\frac1 2 }}{m_{\frac1 2 }} (l,\xi)}+  \langle \mu^{-\frac 13}t^{-\frac 32 }\rangle \right)\vert k-l,\eta-\xi\vert^5\\
\end{align*}

Property iv) is a consequence of using the mean value theorem and $\p_\eta M_L^\theta =\tfrac 1 k \p_t M_L^\theta  $.

\end{proof}

\begin{proof}[Proof of Lemma \ref{lem:Mb}]
    Monotenicity and property (i) are clear by definition, property (ii) works the same as for the previous linear weights and therefore we only prove (iii). In the following we write $M$ for $M_{L,d,c_1}$. We WLOG assume that $k, l\neq 0$, since $M(t,0,\cdot )=M(t,1,\cdot )$. We split 
    \begin{align*}
         M(k,\eta) -M(l,\xi)&=M(k,\eta) -M(l,\eta)+M(l,\eta) -M(l,\xi),
    \end{align*}
    and estimate the commutator separately. By mean calue theorem there exists a $\tilde \eta \in [\eta,\xi]$ such that
\begin{align*}
    M(k,\eta) -M(k,\xi) &= \p_\eta M (k,\tilde \eta)(\eta-\xi) \\
    &= \frac {2d_2 }k \frac 1{1+(d_1 +d_2(t-\frac {\tilde \eta} k))^2 }(\eta-\xi)
\end{align*}
We use that $\tfrac 1{1+(d_1 +d_2(t-\frac {\tilde \eta} k))^2 }\approx \tfrac 1{1+(t-\frac {\tilde \eta} k)^2 }$ and so

\begin{align*}
   \frac {\vert l,\xi\vert }{t} \vert  M(k,\eta) -M(k,\xi)\vert 
    &\lesssim  \frac {\vert l,\xi\vert }{t k} \tfrac 1{1+(t-\frac {\tilde \eta} k)^2 }\vert k-l,\eta-\xi\vert\lesssim \vert k-l,\eta-\xi\vert^3
\end{align*}
which is consistent with (iii). Now we prove 
    \begin{align*}
        \frac {\vert l,\xi\vert}{t} \vert M (k,\eta) -M(l,\eta ) \vert \lesssim \left( 1+\mu^{-\frac 13 } \vert \ln(\mu)\vert^{1+r} \sqrt{\frac{\p_t m_0 }{m_0} (k,\eta ) }\sqrt{\frac{\p_t m_0 }{m_0} (l,\xi ) } \right) \langle  k-l, \eta-\xi \rangle^5.
    \end{align*}
     We distinguish between the tree cases ($0\in[k,l] $ and $t\not\in [ \frac \xi l,\frac \xi k ]$),  ($0\not\in[k,l] $ and $t\not\in [ \frac \xi l,\frac \xi k ]$) and $t \in [ \frac \xi l,\frac \xi k ]$. For $0\in[k,l] $ and $t\not\in [ \frac \xi l,\frac \xi k ]$, $k$ and $l$ have different signs and so 
\begin{align*}
    t\ge \min\left(\vert \tfrac \xi l \vert , \vert \tfrac \xi k \vert \right)
\end{align*}
Furthermore since $\vert k-l\vert \ge \vert k\vert, \vert l\vert$ we estimate 
\begin{align*}
        \frac {\vert l,\xi\vert}{t} \vert M (k,\eta) -M(l,\eta ) \vert \lesssim \vert k-l\vert^2. 
    \end{align*}
Let $0\not\in[k,l] $ and $t\not\in [ \frac \xi l,\frac \xi k ]$, we consider $M(k,\eta)$ as a (smooth) function of $k\in \R\setminus \{0\}$, then there exists a $n \in [k,l] $ such that
\begin{align*}
     M (k,\xi) -M(l,\xi )   &=\p_k M(n ,\xi)(k-l) 
\end{align*}
and so 
\begin{align*}
    \vert M (k,\xi) -M(l,\xi ) \vert   &\le \frac \xi {n^2 }\tfrac 1 {1+\left(d_1 +d_2 (t-\frac \xi{n} )\right)^2 } \vert k-l\vert . 
\end{align*}
Since $t\not\in [ \frac \xi l,\frac \xi k ]$ we obtain 
\begin{align*}
    \tfrac 1 {1+\left(d_1 +d_2 (t-\frac \xi{n} )\right)^2 }&\le \tfrac 1 {1+\left(d_1 +d_2 (t-\frac \xi{k} )\right)^2 }+\tfrac 1 {1+\left(d_1 +d_2 (t-\frac \xi{l} )\right)^2 }\\
    &\lesssim \tfrac 1 {1+(t-\frac \xi k )^2}+\tfrac 1 {1+(t-\frac \xi l )^2}
\end{align*}
and so we estimate 
\begin{align*}
    \frac {\vert l,\xi\vert}{t} \vert M (k,\xi) -M(l,\xi ) \vert   &\le \frac {\xi\vert l,\xi\vert } {t n^2 }\left(\tfrac 1 {1+(t-\frac \xi k )^2}+\tfrac 1 {1+(t-\frac \xi l )^2}\right) \vert k-l\vert \\
    &\le  \left(t^{-1}+\tfrac t {1+(t-\frac \xi k )^2}+\tfrac t {1+(t-\frac \xi l )^2}\right) \vert k-l\vert \\
    &\le \left(t^{-1}+\mu^{\frac 1 3 }+\tfrac t{\langle t \mu^{\frac 1 3 }\rangle } \sqrt{\frac {\p_tm_0 }{m_0 }(k,\eta ) }\sqrt{\frac {\p_tm_0 }{m_0 }(l,\xi ) } \right) \langle  k-l, \eta-\xi \rangle^6  . 
\end{align*}
For $t\in [ \frac \xi l,\frac \xi k ]$ we wlog assume that $\vert l\vert \le \vert k\vert  $. Let $\frac \xi {l^2} \ge 1$, then by $\langle t-\frac \xi l\rangle \le \langle \frac \xi k  - \frac \xi l\rangle $, we infer 
\begin{align*}
    1 \le  \tfrac \xi{l^2 }  \frac 1{\langle t-\frac \xi l \rangle }\vert k-l\vert^2. 
\end{align*}
If $\frac \xi {l^2} \le 1$, then $1\le \tfrac 2{\langle t-\frac \xi l \rangle }$. As before by $ \vert e^{\vert x\vert}-1\vert \le\vert x\vert e^{\vert x\vert}$
\begin{align*}
    \vert M (k,\xi) -M(l,\xi )\vert \lesssim  \vert \frac \xi l-\frac \xi k \vert \le  \tfrac \xi{l^2 }  \frac 1{\langle t-\frac \xi l \rangle }\vert k-l\vert^2.
\end{align*}
Using these estimates, we infer 
\begin{align*}
    \frac {\vert l,\xi\vert}{t} \vert M (k,\xi) -M(l,\xi ) \vert   &\le   \tfrac \xi{l^2 } \tfrac {\vert l,\xi\vert } {t  }\tfrac 1{\langle t-\frac \xi l \rangle }\vert k-l\vert \le t\tfrac 1{\langle t-\frac \xi l \rangle }\vert k-l\vert
\end{align*}
and so distinguishing between $\langle t-\frac \xi l \rangle\ge \mu^{-1}$ and $\langle t-\frac \xi l \rangle\le \mu^{-1}$  yields
\begin{align*}
    \frac {\vert l,\xi\vert}{t} \vert M (k,\xi) -M(l,\xi ) \vert   &\le t \left( \mu +\tfrac {\vert \ln(\mu)\vert^{1+r}}{\langle t-\frac \xi l \rangle \vert \ln(1+\langle t-\frac \xi l \rangle)\vert^{1+r}} \right) \vert k-l\vert \\
    &\lesssim  \left( t\mu^{\frac 13 } +(t+\mu^{-\frac 13 })\vert \ln(\mu)\vert^{1+r}\sqrt{\frac{\p_t m_0 }{m_0} (k,\eta ) }\sqrt{\frac{\p_t m_0 }{m_0} (l,\xi ) } \right) \langle  k-l, \eta-\xi \rangle^5 . 
\end{align*}

\end{proof}

\bibliographystyle{alpha} 

\bibliography{library}

\end{document}